\newcommand{%
  \def\svgwidth{\columnwidth}
  \import{}{.pdf_tex}
}[1]{%
  \def\svgwidth{\columnwidth}
  \import{}{#1.pdf_tex}
}
\tikzstyle{state}=[
\definecolor{bubbles}{rgb}{0.91, 1.0, 1.0}
\definecolor{aquamarine}{rgb}{0.5, 1.0, 0.83}
\definecolor{bubblegum}{rgb}{0.99, 0.76, 0.8}
\definecolor{bluebell}{rgb}{0.64, 0.64, 0.82}
\definecolor{dollarbill}{rgb}{0.72, 0.93, 0.6}
\newtheorem{theorem}{Theorem}
\newtheorem{conjecture}{Conjecture}
\newtheorem{proposition}{Proposition}
\newtheorem{definition}{Definition}
\newtheorem{remark}[theorem]{Remark}
\newtheorem{example}[theorem]{Example}
\newtheorem{lemma}[theorem]{Lemma}
\newcommand{\figref}[1]{Fig.~\ref{#1}}
\newcommand{\tabref}[1]{Table~\ref{#1}}
\newcommand{\conjref}[1]{Conjecture~\ref{#1}}
\newcommand{\propref}[1]{Proposition~\ref{#1}}
\newcommand{\lemref}[1]{Lemma~\ref{#1}}
\newcommand{\defref}[1]{Definition~\ref{#1}}
\newcommand{\secref}[1]{Section~\ref{#1}}
\newcommand{\Z}{\mathbb Z}
\newcommand{\N}{\mathbb N}
\newcommand{\Q}{\mathbb Q}
\newcommand{\C}{\mathbb C}
\newcommand{\bbP}{\mathbb P}
\newcommand{\im}{ \operatorname{im}}
\newcommand{\res}{\operatorname{Res}}
\newcommand{\Prim}{\operatorname{Prim}}
\newcommand{\Lop}{\mathcal{L}}
\newcommand{\Tr}{\operatorname{Tr}}
\newcommand{\rk}{\operatorname{rk}}
\newcommand{\sindex}[1]{{#1}}
\newcommand{\Hpara}{H^\mathrm{para}}
\newcommand{\ud}{\mathrm{d}}
\def\st{\ \middle|\ }
\def\ps@pprintTitle{%
  \let\@oddhead\@empty
  \let\@evenhead\@empty
  \def\@oddfoot{\reset@font\hfil\thepage\hfil}
  \let\@evenfoot\@oddfoot
}
\sffamily\color{gray}\arabic*,
\quad\ctfont{[},
\title[Periods of fibre products of elliptic surfaces and the Gamma conjecture]{Periods of fibre products of elliptic surfaces\\ and the Gamma conjecture}
\author{Eric Pichon-Pharabod }
\address{Max Planck Institute for Mathematics in the Sciences, Inselstr.\ 22, 04103 Leipzig, Germany}
\email{eric.pichon@mis.mpg.de}
\date{\today}
\thanks{This work has been supported by the Agence nationale de la recherche
  (ANR), grant agreement ANR-19-CE40-0018 (De Rerum Natura), grant agreement
ANR-20-CE40-0026-01 (Symmetries and moduli spaces in algebraic geometry and physics); and the European
  Research Council (ERC) under the European Union’s Horizon Europe research and
  innovation programme, grant agreement 101040794 (10000~DIGITS)}
\subjclass[2020]{Primary 14Q15,
	14J32, 
	32G20;   
  Secondary 14J33, 
	14D05}   
\begin{document}

\begin{abstract}
We provide an algorithm for computing a basis of homology of fibre products of elliptic surfaces over $\bbP^1$, along with the corresponding intersection product and period matrices.
We use this data to investigate the Gamma conjecture for Calabi--Yau threefolds obtained in this manner.
We find a formula that works for all operators of a list of 105 fibre products, as well as fourth order operators of the Calabi--Yau database.
  This algorithm comes with a SageMath implementation.
\end{abstract}

\maketitle


\setcounter{tocdepth}{1}
\tableofcontents

\section{Introduction}

Let $S_1\to \bbP^1$ and $S_2\to \bbP^1$ be two relatively minimal elliptic surfaces with section.
We are interested in the fibre product $S_1\times_{\bbP^1}S_2$.
When the critical locus of $S_1$ and $S_2$ are disjoint, this defines a smooth threefold. 
When the two surfaces are additionally relatively minimal rational elliptic surfaces, \textcite{Schoen_1988} showed that the fibre product defines a smooth \emph{Calabi--Yau} threefold. 
Furthermore, authorising certain types of singular fibres to coincide, it is shown in \textcite{KapustkaKapustka_2009} that the possibly singular threefold admits a small or crepant resolution into a Calabi--Yau threefold.

This construction offers a class of Calabi--Yau threefolds that are tractable enough to investigate in ample details, yet varied enough to show a wide panel of phenomena --- as an example, \textcite{Schoen_1988} showed that this construction yields Calabi--Yau threefolds with Euler characteristic $k$ for any $0\le k\le 100$ apart from a handful.
As such they are of particular interest in string theory and mirror symmetry. 

One main focus of this paper will be one-parameter families of Calabi--Yau varieties carrying motives of type $(1,1,1,1)$.
An example of a fibre product of elliptic surfaces carrying such a motive was studied in \textcite{GolyshevVanStraten_2023} where the authors use the fibre product structure to recover arithmetic and geometric properties the motive.
The relative holomorphic periods of these families are subject to a linear differential equation, the Picard--Fuchs equation.
This Picard--Fuchs equation turns out to be a \emph{Calabi--Yau operator} of degree 4, i.e., part of a long list of operators with a maximal unipotent monodromy point which exhibit interesting geometric and arithmetic properties stemming from mirror symmetry.
In \textcite{AlmkvistEtAl_2005}, the authors gave a list of such Calabi--Yau operators obtained by a computer-aided search, which was then named the AESZ list.
The AESZ list has since been gradually extended 
to include now more than 600 fourth order operators, gathered in the Calabi-Yau database (CYDB)\footnote{\url{https://cydb.mathematik.uni-mainz.de/}}.
An updated version of the same database is under development.

When a Calabi--Yau operator is the Picard--Fuchs equation of a one-parameter family of Calabi--Yau varieties, we call this family a \emph{geometric realisation} of the operator.
It is conjectured that all Calabi--Yau operators have geometric realisations.
In particular, their monodromy representation is expected to describe the integral variation of the homology of the fibre of this family, and yields a discrete invariant that allows to distinguish and classify these operators.
When the periods of the geometric model are known with certified bounds of precision, one may compute generators of the monodromy group certifiably.
For many Calabi--Yau operators, however, a geometric realisation is not known.
To circumvent this, periods are conjectured to be given by a \emph{Gamma-class} formula \parencite{CandelasEtAl_1991a, Libgober_1999, KatzarkovEtAl_2008, Iritani_2009, HalversonEtAl_2015, CandelasEtAl_2020}, which relates the Frobenius basis at the MUM point to the integral basis of periods via topological invariants of the mirror Calabi--Yau of the (hypothetical) geometric realisation.
This can in practice be used to heuristically recover period matrices (see e.g. \textcite{KnappMcGovern_2025, Elmi_2024}).
However the formula in its current state is known to fail for certain Calabi--Yau operators.
One of the results of this paper is way to compute the relation between the integral periods and the Frobenius basis explicitly for fibre products, which provides insight into a more general Gamma-class formula.

Our goal for this paper is to provide tools to study the motives appearing in fibre products of elliptic surfaces.
We will consider threefolds of this type that are singular. 
We will not attempt to resolve these singularities --- it is not known whether every motive can be realised by a family of generically smooth varieties, as seen for example for the 14th hypergeometric case \parencite{ClingherEtAl_2016}.
Instead we will consider \emph{smoothings} of these elliptic surface, by formally splitting off the colliding singular fibres of the threefold.

\subsection*{Contributions}
We provide an algorithm for computing the full homology lattice with its intersection product of a smoothing $\tilde T$ of a fibre product $T = S_1\times_{\bbP^1} S_2$ of relatively minimal elliptic surfaces $S_1$, $S_2$ with section.
We also provide
\begin{itemize}
\item The embedding of the parabolic homology lattice $\Hpara_3(T)$ in $H_3(\tilde T)$.
\item The embedding of lattice of vanishing cycles $\Lambda_{\rm vc}$ in $H_3(\tilde T)$.
\item The period vectors $\pi(\omega)\in \C^{r}$ on $\Lambda_{\rm vc}^\perp$ (with $r=\rk\Lambda_{\rm vc}^\perp$) of cohomology forms $\omega\in H^3(\tilde T)$ of the form $\omega = \omega_t\wedge \ud t$. The vectors are given numerically with certified bounds of precision, in quasilinear time with respect to precision.
\end{itemize}
We then use this algorithm to investigate the Gamma-class formula for \emph{Hadamard products} of elliptic surfaces, i.e., one parameter families of such threefolds.
We consider 105 such families and find a general shape for the Gamma-class formula that fits all of them numerically to very high precision:
\begin{equation*}
\begin{pmatrix} 
    \chi\lambda - \frac{\alpha}{2} \frac{c_2\cdot H}{24} -\frac{\delta}{2} \, &\, M\frac{c_2\cdot H}{24} \,& \,\frac{\alpha}{2} \frac{H^3}{2!} \,& \,M\frac{H^3}{3!}\\[6pt]
    \frac{c_2\cdot H}{24} & N\frac{\sigma}{2} & -\frac{H^3}{2!} & 0\\[6pt]
    1 & 0 & 0 & 0\\[6pt]
    \frac{\alpha}{2} \frac{N}{M} & N & 0 & 0
    \end{pmatrix}\,.
\end{equation*}
Here $\lambda = \zeta(3)/(2\pi i)^3$ and all the other variables are integers.
We provide the matching values for the Hadamard products in \tabref{tab:gamma_class_invariants}, as well as for the examples of Calabi--Yau database with integral monodromy in \tabref{tab:cydb_gamma_class}.
The algorithm presented in this paper is implemented in the \texttt{lefschetz-family}\footnote{\url{https://github.com/ericpipha/lefschetz-family}} package in SageMath \parencite{sagemath}.

\subsection*{Previous works}
In the case of curves, an algorithm for computing periods was first given by \textcite{DeconinckVanHoeij_2001}, and later extended by numerous authors, e.g. \textcite{Swierczewski_2017, Neurohr_2018, MolinNeurohr_2019, BruinEtAl_2019} to name a few.
In higher dimensions, the works of \textcite{CynkVanStraten_2019} and \textcite{ElsenhansJahnel_2022} give methods for computing the periods of Calabi--Yau manifolds obtained from double covers of respectively $\bbP^3$ and $\bbP^2$ ramified along a hyperplane arrangement.
An algorithm for the computation of periods of smooth projective hypersurfaces was given by \textcite{Sertoz_2019}.
An alternative approach for hypersurfaces was then developed in \textcite{LairezEtAl_2024}, and later extended beyond hypersurfaces to elliptic surfaces in \textcite{Pichon-Pharabod_2025}.
The methods presented here build on these last two papers.

Finally methods for numerical computations of periods of fibre products of elliptic surfaces were independently studied in \textcite{Donlagic_2025}.
Our approach is different: the author works directly with a resolution of the singular model, whereas we consider a smoothing.
In particular our results apply in more generality, as we do not restrict to semi-stable singular fibres.
Furthermore our approach yields additional relevant data, namely a certified description of the third homology group of a smoothing along with its intersection product, as well as certified precision bounds for the numerical approximations of the periods.


\subsection*{Outline}
We begin in \secref{sec:fibrations} by recalling generalities about fibrations.
\secref{sec:elliptic_surfaces} recalls the necessary ingredients for computing the homology of elliptic surfaces that are relevant to our discussion, mostly following \textcite{Pichon-Pharabod_2025}.
\secref{sec:fibre_products} extends the methods to compute the third homology group of fibre products of elliptic surfaces.
We start from smooth fibre products obtained from elliptic surfaces with disjoint sets of critical values, and then access the singular case by means of smoothings.
\secref{sec:period_evaluation} explains how this description of the homology can be used to compute the periods of the threefolds, and gives a means to recover the holomorphic forms of the resulting threefolds.
Finally \secref{sec:gamma_class} applies these methods to 1-parameter families of Calabi--Yau threefolds obtained from fibre products associated to Calabi--Yau operators.
We compute the associated monodromy representation certifiably, and recover insight into the Gamma-class formula for these families.
More precisely, we find a general formula that fits all the examples we have considered, stated in \conjref{conj:gamma_class}.
This formula seems to hold for all examples of the Calabi--Yau database with integral monodromy, and we give the corresponding invariants.

\subsection*{Acknowledgements}
The topic of this paper was suggested to me by Duco van Straten.
I am indebted to Charles Doran, Mohamed Elmi, Nutsa Gegelia, Abhiram Kidambi, Pierre Lairez, Duco van Straten, and Pierre Vanhove for insightful discussions and valuable comments on an earlier version of the text.

\section{Homology of fibrations}\label{sec:fibrations}
Let $X$ be a smooth complex manifold equipped with a proper surjective map $f\colon X\to V$ for some connected complex curve $V$.
For $t\in V$, we denote $F_t = f^{-1}(t)$.
We denote by $\Sigma$ the finite set of critical values of $\Sigma$.

\subsection{Monodromy, extensions and parabolic homology}
We briefly recall the notions of monodromy and extensions, following \textcite{Lamotke_1981} and Section 2.1.2 of \textcite{LairezEtAl_2024}.
The restriction of $f$ to $f^{-1}(V\setminus \Sigma)$ is a locally trivial fibration:
if $U\subset V\setminus\Sigma$ is open and simply connected, there is a trivialisation $f^{-1}(U) \simeq F_b\times U$ of the fibration, for all $b\in U$.
In particular a non-self-intersecting path $\ell : [0,1]\to \bbP^1\setminus \Sigma$ induces a diffeomorphism $F_{\ell(0)}\simeq F_{\ell(1)}$ which is unique up to some automorphism of $F_{\ell(1)}$ that is isotopic to the identity. 
Thus $\ell$ induces an automorphism $\ell_*:H_k(F_{\ell(0)})\to H_k(F_{\ell(1)})$ for all $k$.
For $\ell$, $\ell'$ two non-intersecting such paths compatible for concatenation that do not intersect, one may show that 
\begin{equation}\label{eq:comp_monodromy}
(\ell'\ell)_* = \ell'_*\circ \ell_*\,,
\end{equation}
(where $\ell'\ell$ is the path that goes through $\ell$ first, then through $\ell'$). 
Using this formula, we can extend the notion of monodromy to self intersecting paths, and to loops. 
Furthermore, one may show that the map $\ell_*$ depends only on the homotopy class of $\ell$.

Let $b\in\bbP^1\setminus \Sigma$.
The above construction yields the \emph{monodromy representation} 
\begin{equation}
\begin{cases}\pi_1(V\setminus\Sigma, b)\to \operatorname{Aut}(H_k(F_b))\\
 [\ell]\mapsto \ell_*
 \end{cases}\,,
\end{equation}
where $[\ell]$ denotes the homotopy class of $\ell$.
The map $\ell_*$ is called the {\em action of monodromy along $\ell$ on $H_k(F_b)$}.
As readily seen from the trivialisation of the fibration, monodromy preserves the intersection product.
Methods to compute this matrix for the middle homology group of the fibre with semi-numerical computations involving the Picard-Fuchs equation of $F_t$ and a period matrix of $F_t$ were developed in \textcite[\S3.5.2]{LairezEtAl_2024}. 
These methods apply to all the monodromy matrices considered in this text.
\begin{remark}
We note here that, alternatively, the monodromy representation of the fibre products considered in \secref{sec:fibre_products} can be computed as the tensor product of the monodromy representations of the corresponding elliptic surfaces, which themselves can be automatically computed using the algorithm of \textcite{Pichon-Pharabod_2025}.
\end{remark}

A related notion to monodromy is that of {\em extensions}. 
Given a non-intersecting path $\ell$, a simply connected neighbourhood $V$ of $\im\ell$ and a $k$-chain $\Delta$ of $F_{\ell(0)}$, 
the identification of $\Delta\times \im\ell$ in $f^{-1}(V)\subset X$ produces a $k+1$-chain with boundary in $F_{\ell(0)}\cup F_{\ell(1)}$.
Once again, the relative homology class of this $k+1$-chain is unique in the relative homology group $H_{k+1}(X, F_{\ell(0)}\cup F_{\ell(1)})$, and only depends on the homotopy class of $\ell$ and the homology class of $\Delta$.
Therefore we define the {\em extension map} $\tau_{\ell} : H_{k}(F_{\ell(0)}) \to H_{k+1}(X, F_{\ell(0)}\cup F_{\ell(1)})$.
Similarly to monodromy, extensions satisfy a composition rule which allows to extend their definition to self-intersecting paths:
\begin{equation}\label{eq:extensions}
\tau_{\ell'\ell}(\gamma) = \tau_\ell(\gamma) + \tau_{\ell'}(\ell_*(\gamma)) \,,
\end{equation}
in $H_{k+1}(X, F_{\ell(0)}\cup F_{\ell(1)}\cup F_{\ell'(1)})$.
In particular, when $\ell$ is a loop pointed at $b$, we obtain a map
\begin{equation}
\tau : \begin{cases}
\pi_1(V\setminus\Sigma, b)\times H_k(F_b)\to H_{k+1}(X, F_b)\\
[\ell], \Delta \mapsto \tau_\ell(\Delta)
\end{cases}\,.
\end{equation}

Extensions and monodromy are closely related by the formula
\begin{equation}\label{eq:boundary_extension}
\partial(\tau_\ell(\gamma)) = \ell_*\gamma - \gamma\,,
\end{equation}
where $\partial : H_{k+1}(X, F_b)\to H_k(F_b)$ is the boundary map.
This construction is illustrated in \figref{fig:thimbles}.

When the boundary $\partial(u)$ of such a relative cycle $u\in H_k(X, F_b)$ is zero, then $u$ can be lifted to a closed cycle of $X$ module cycles that are contained solely in the fibre.
This is the content of the following proposition.
\begin{proposition}\label{prop:gluing_thimbles}
There is a canonical identification $H_k(X)/\iota_*H_k(F_b)\simeq \ker \partial$ where $\iota_*$ is the pushforward of the inclusion $\iota\colon F_b\to X$.
\end{proposition}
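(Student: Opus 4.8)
The plan is to deduce the statement directly from the long exact sequence of the pair $(X, F_b)$ in singular homology. First I would fix notation: write $j$ for the natural map $H_k(X)\to H_k(X,F_b)$ that sends an absolute cycle to the relative cycle it represents, and keep $\iota_*$ and $\partial$ as in the excerpt. The portion of the long exact sequence I need is
\[
H_k(F_b)\xrightarrow{\ \iota_*\ }H_k(X)\xrightarrow{\ j\ }H_k(X,F_b)\xrightarrow{\ \partial\ }H_{k-1}(F_b)\,.
\]

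Next I would invoke exactness twice. Exactness at $H_k(X)$ gives $\ker j=\im\iota_*=\iota_*H_k(F_b)$, and exactness at $H_k(X,F_b)$ gives $\im j=\ker\partial$. Combining these with the first isomorphism theorem, the map $j$ descends to an isomorphism $H_k(X)/\iota_*H_k(F_b)\xrightarrow{\ \sim\ }\ker\partial$, which is exactly the asserted identification. To justify the word \emph{canonical}, I would observe that it is induced by the functorial map $j$, and then unwind it to recover the geometric description preceding the statement: a relative class $u$ with $\partial u=0$ lifts to an absolute cycle of $X$, and any two such lifts differ by an element of $\ker j=\iota_*H_k(F_b)$, i.e.\ by a cycle supported entirely in the fibre $F_b$.

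I do not expect any genuine obstacle here: the whole content is the exactness of the homology sequence of a pair, which holds for singular homology with arbitrary coefficients and requires no regularity hypothesis on $F_b$. The one point needing a little care is bookkeeping with the degree shift — the excerpt's treatment of extensions works with $\partial\colon H_{k+1}(X,F_b)\to H_k(F_b)$, so one should take the segment of the long exact sequence that terminates in the homology degree relative to which $\ker\partial$ is formed, with $\iota_*H_\bullet(F_b)$ taken in the matching absolute degree.
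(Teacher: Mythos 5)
Your proof is correct and follows exactly the paper's own argument: the statement is read off from the long exact sequence of the pair $(X,F_b)$ via exactness at $H_k(X)$ and at $H_k(X,F_b)$. Your remark about the degree bookkeeping is apt, since the paper's displayed sequence has a typo in the degree of the final term.
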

\begin{proof}
This is a direct consequence of the long exact sequence of the pair $(X, F_b)$:
\begin{equation}
H_k(F_b) \xrightarrow{\iota_*} H_k(X)\to H_{k}(X, F_b) \xrightarrow{\partial} H_{k}(F_b)\,.
\end{equation}
\end{proof}

This allows us to define the \emph{parabolic homology} of $X$ equipped with its fibration.
\begin{definition}
The \emph{$k$-th parabolic homology group} $\Hpara_k(X)$ of $X$ equipped with $f\colon X\to V$ is the submodule of $H_k(X)/H_k(F_b)$ generated by closed extensions:
\begin{equation}
	\Hpara_k(X) := \ker\partial\cap \left(\im \tau_{\ell_1} + \dots + \im \tau_{\ell_r}\right)
\end{equation}
for $\ell_1, \dots, \ell_r$ a generating set of $\pi_1(V\setminus\Sigma, b)$.
\end{definition}
The main point of this definition, as we will see in \secref{sec:period_evaluation}, is that the periods of such cycles may be computed using the methods of \textcite{LairezEtAl_2024}, and that this information is sufficient to compute periods on all cycles.

\begin{figure}[]
  \centering
  \begin{subfigure}[b]{0.45\linewidth}
    \begin{center}
  \def\svgwidth{\columnwidth}
\begingroup%
  \makeatletter%
  \providecommand\color[2][]{%
    \errmessage{(Inkscape) Color is used for the text in Inkscape, but the package 'color.sty' is not loaded}%
    \renewcommand\color[2][]{}%
  }%
  \providecommand\transparent[1]{%
    \errmessage{(Inkscape) Transparency is used (non-zero) for the text in Inkscape, but the package 'transparent.sty' is not loaded}%
    \renewcommand\transparent[1]{}%
  }%
  \providecommand\rotatebox[2]{#2}%
  \newcommand*\fsize{\dimexpr\f@size pt\relax}%
  \newcommand*\lineheight[1]{\fontsize{\fsize}{#1\fsize}\selectfont}%
  \ifx\svgwidth\undefined%
    \setlength{\unitlength}{417.98354742bp}%
    \ifx\svgscale\undefined%
      \relax%
    \else%
      \setlength{\unitlength}{\unitlength * \real{\svgscale}}%
    \fi%
  \else%
    \setlength{\unitlength}{\svgwidth}%
  \fi%
  \global\let\svgwidth\undefined%
  \global\let\svgscale\undefined%
  \makeatother%
  \begin{picture}(1,0.85519589)%
    \lineheight{1}%
    \setlength\tabcolsep{0pt}%
    \put(0,0){\includegraphics[width=\unitlength,page=1]{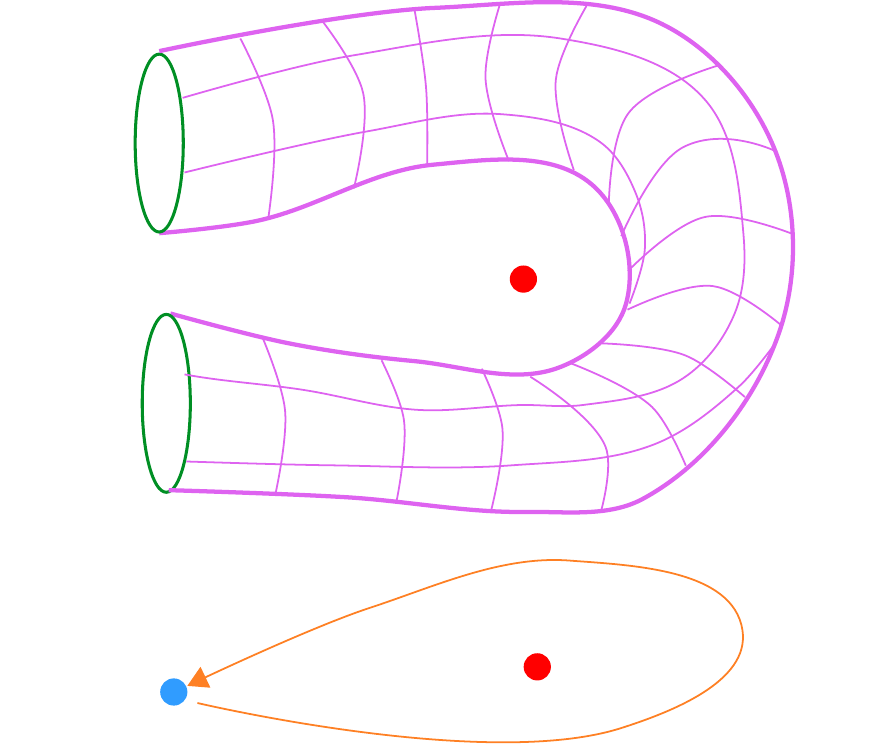}}%
    \put(0.12861315,0.37471888){\color[rgb]{0,0,0}\makebox(0,0)[t]{\lineheight{1.25}\smash{\begin{tabular}[t]{c}$\gamma$\end{tabular}}}}%
    \put(0.87899475,0.18680271){\color[rgb]{0,0,0}\makebox(0,0)[t]{\lineheight{1.25}\smash{\begin{tabular}[t]{c}$\ell$\end{tabular}}}}%
    \put(0.15327059,0.11097117){\color[rgb]{0,0,0}\makebox(0,0)[t]{\lineheight{1.25}\smash{\begin{tabular}[t]{c}$b$\end{tabular}}}}%
    \put(0.65752955,0.08541752){\color[rgb]{0,0,0}\makebox(0,0)[t]{\lineheight{1.25}\smash{\begin{tabular}[t]{c}$t_j$\end{tabular}}}}%
    \put(0.09194184,0.68166978){\color[rgb]{0,0,0}\makebox(0,0)[t]{\lineheight{1.25}\smash{\begin{tabular}[t]{c}$\ell_*(\gamma)$\end{tabular}}}}%
    \put(0.63197601,0.57434441){\color[rgb]{0,0,0}\makebox(0,0)[t]{\lineheight{1.25}\smash{\begin{tabular}[t]{c}$x_j$\end{tabular}}}}%
    \put(0.89212078,0.78705278){\color[rgb]{0,0,0}\makebox(0,0)[t]{\lineheight{1.25}\smash{\begin{tabular}[t]{c}$\tau_\ell(\gamma)$\end{tabular}}}}%
  \end{picture}%
\endgroup%

    \end{center}
  \end{subfigure}
  \begin{subfigure}[b]{0.45\linewidth}
    \begin{center}
  \def\svgwidth{\columnwidth}
\begingroup%
  \makeatletter%
  \providecommand\color[2][]{%
    \errmessage{(Inkscape) Color is used for the text in Inkscape, but the package 'color.sty' is not loaded}%
    \renewcommand\color[2][]{}%
  }%
  \providecommand\transparent[1]{%
    \errmessage{(Inkscape) Transparency is used (non-zero) for the text in Inkscape, but the package 'transparent.sty' is not loaded}%
    \renewcommand\transparent[1]{}%
  }%
  \providecommand\rotatebox[2]{#2}%
  \newcommand*\fsize{\dimexpr\f@size pt\relax}%
  \newcommand*\lineheight[1]{\fontsize{\fsize}{#1\fsize}\selectfont}%
  \ifx\svgwidth\undefined%
    \setlength{\unitlength}{430.90026855bp}%
    \ifx\svgscale\undefined%
      \relax%
    \else%
      \setlength{\unitlength}{\unitlength * \real{\svgscale}}%
    \fi%
  \else%
    \setlength{\unitlength}{\svgwidth}%
  \fi%
  \global\let\svgwidth\undefined%
  \global\let\svgscale\undefined%
  \makeatother%
  \begin{picture}(1,0.82956048)%
    \lineheight{1}%
    \setlength\tabcolsep{0pt}%
    \put(0,0){\includegraphics[width=\unitlength,page=1]{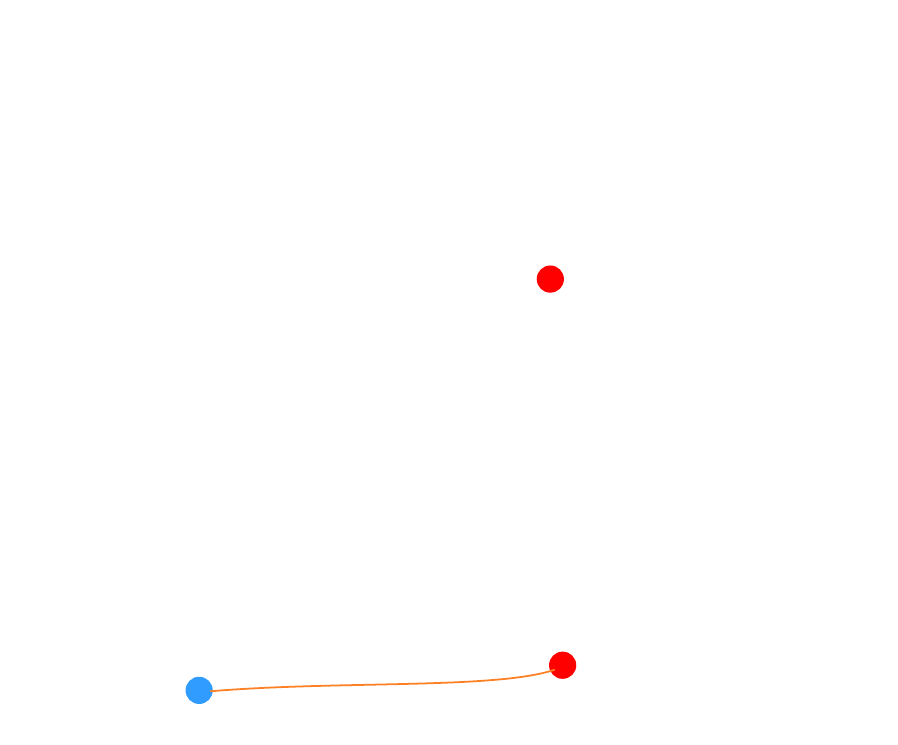}}%
    \put(0.08007343,0.57149155){\color[rgb]{0,0,0}\makebox(0,0)[t]{\lineheight{1.25}\smash{\begin{tabular}[t]{c}$\partial\Delta$\end{tabular}}}}%
    \put(0.39352431,0.38729099){\color[rgb]{0,0,0}\makebox(0,0)[t]{\lineheight{1.25}\smash{\begin{tabular}[t]{c}$\Delta$\end{tabular}}}}%
    \put(0.45026464,0.08481528){\color[rgb]{0,0,0}\makebox(0,0)[t]{\lineheight{1.25}\smash{\begin{tabular}[t]{c}$\ell$\end{tabular}}}}%
    \put(0.17693377,0.10944814){\color[rgb]{0,0,0}\makebox(0,0)[t]{\lineheight{1.25}\smash{\begin{tabular}[t]{c}$b$\end{tabular}}}}%
    \put(0.66607695,0.0846606){\color[rgb]{0,0,0}\makebox(0,0)[t]{\lineheight{1.25}\smash{\begin{tabular}[t]{c}$t_j$\end{tabular}}}}%
    \put(0,0){\includegraphics[width=\unitlength,page=2]{thimble2.pdf}}%
    \put(0.67310427,0.51563997){\color[rgb]{0,0,0}\makebox(0,0)[t]{\lineheight{1.25}\smash{\begin{tabular}[t]{c}$x_j$\end{tabular}}}}%
  \end{picture}%
\endgroup%

    \end{center}
  \end{subfigure}
    \caption{\textit{Right:} Extending the $k$-cycle $\gamma\in H_{k}(F_b)$ (in green) along the loop $\ell\in \pi_1(V\setminus\Sigma)$ (in orange) yields a $k+1$-cycle $\tau_\ell(\gamma)$ (in pink). The monodromy along $\ell$ sends $\gamma$ to $\ell_*(\gamma)$, and the difference $\ell_*\gamma-\gamma$ is the boundary of $\tau_\ell(\gamma)$. \textit{Left:} When the loop is a simple loop around a single critical value $t_j\in\Sigma$ with a simple node $x_j$ in $F_{t_j}$, such a $k+1$-cycle is a \emph{thimble}. The thimble can equivalently be obtained by extending a vanishing $k-1$ cycle along a path to the critical value. The vanishing cycle $\partial \Delta$ collapses and vanishes into the singular point $x_j$.}\label{fig:thimbles}
  \label{fig:thimble}
\end{figure}

\subsection{Local decomposition, thimbles and Lefschetz fibrations}\label{sec:local_decomposition}

We start by describing $H_k(X, F_b)$ in terms of local contributions from the singular fibres.
We assume here that $V$ is homeomorphic to a disk.
Let $b\in V\setminus \Sigma$ be a generic base point.
Index the critical values $\{t_1, \dots, t_r\}  = \Sigma$, and pick pairwise non-intersecting topological open disks ${D_1, \dots, D_r, D_b\subset \bbP^1}$ around $t_1, \dots, t_r, b$ respectively.
 Pick $b_j\in D_j$ and pairwise non-intersecting paths $p_j:[0,1]\to \bbP^1$ such that $p_j(0)=b$ and $p_j(1)=b_j$.
Up to reordering the labels of the critical values, we can assume without loss of generality that the anticlockwise generator of $\pi_1(D_b\setminus \{b\})$ intersects $p_1, p_2, \dots, p_r$ in (cyclic) order.
Finally let $\ell_j$ be the loop obtained by conjugating the anticlockwise generator of $\pi_1(D_j\setminus\{t_i\}, b_j)$ with $p_i$, so as to obtain a class in $\pi_1(V\setminus\Sigma, b)$.
The above reordering ensures that the composition $\ell_r\cdots \ell_1$ is a simple loop around all the critical values, i.e., isotopic to the boundary of $V$ in $\pi_1(V\setminus\Sigma)$.

\begin{figure}[h]
  \centering
  \begin{subfigure}[b]{0.45\linewidth}
    \begin{center}
  \def\svgwidth{\columnwidth}
  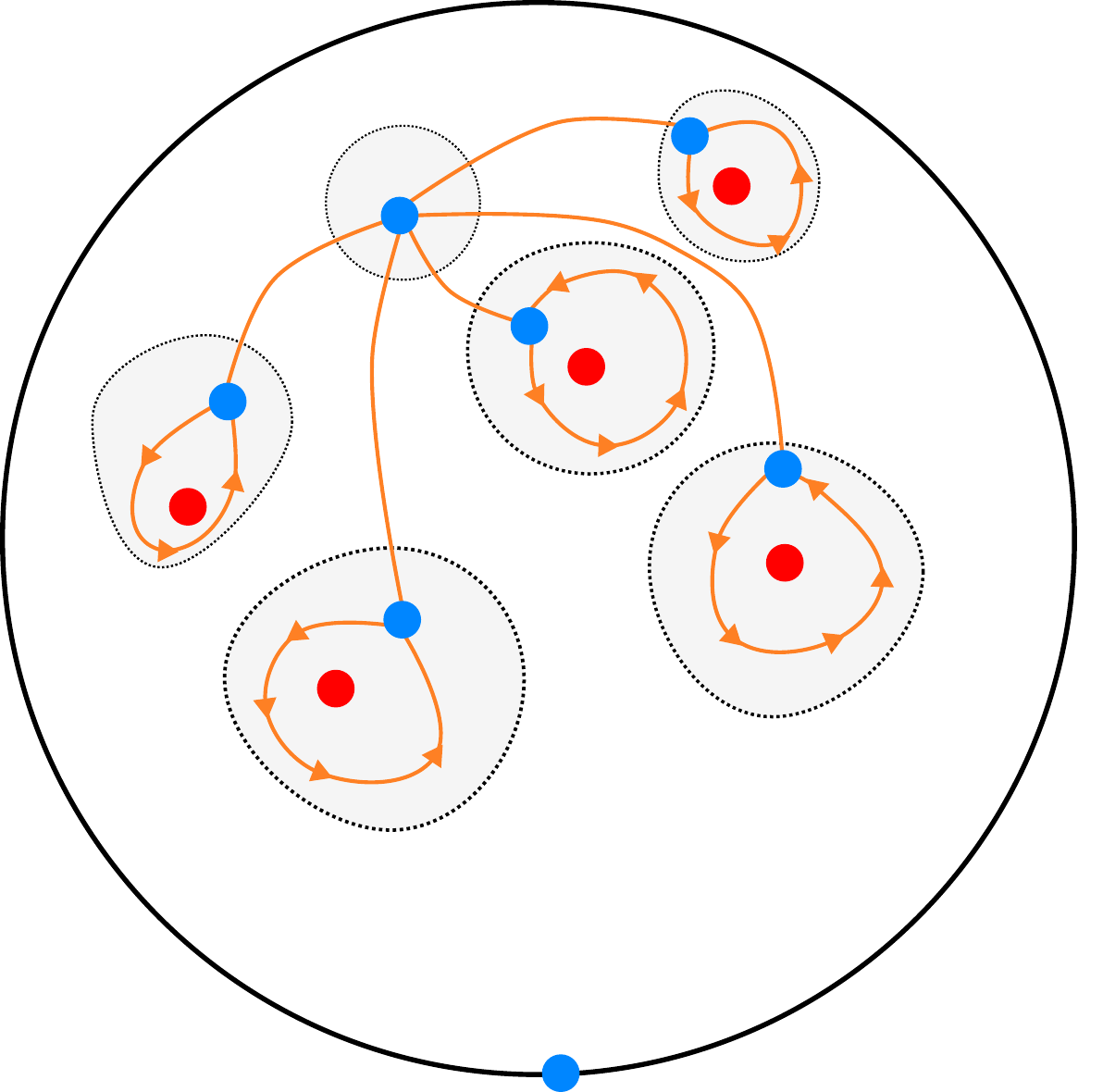

    \end{center}
  \end{subfigure}
    \caption{An illustration of the choices made at the begining of \secref{sec:local_decomposition} in the case $V = \bbP^1\setminus\{\infty\}$. We pick a generic point $b\in V\setminus\Sigma$. Around each $t_j\in\Sigma$ lies an open disk $D_j$ in which we pick a point $b_j\ne t_j$. We connect $b$ to $b_j$ via a path $p_j$. The loop $\ell_j$ is the conjugation of the anticlockwise generator of $\pi_1(D_j\setminus t_j, b_j)$ by $p_j$. We reorder the points to the composition $\ell_5\cdots \ell_1$ is homotopic to a simple anticlockwise loop around all the points of $\Sigma$ (in red), which is equivalent to a simple clockwise loop around $\infty$.}
  \label{fig:thimble}
\end{figure}

\begin{lemma}\label{lem:local_decomp}
The inclusion yields an isomorphism
\begin{equation}
\bigoplus_{j=1}^r H_k(f^{-1}(D_j), F_{b_j})\xrightarrow{\sim} H_k(X, F_b)\,,
\end{equation}
where the identification $F_{b_j}\simeq F_{b}$ is induced by transport along $p_j$.
\end{lemma}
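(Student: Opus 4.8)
The plan is to deform the space $V$ onto a wedge/bouquet of the small disks $D_1,\dots,D_r$ joined by the paths $p_j$, and to argue that the relative homology only sees the parts of $X$ lying over the disks. Concretely, let $W\subset V$ be the union $\bigcup_j (D_j\cup \im p_j)\cup D_b$; since $V$ is homeomorphic to a disk and the disks and paths are pairwise non-intersecting, $W$ is a deformation retract of $V$, and this retraction lifts (via the local triviality of $f$ over $V\setminus\Sigma$) to a deformation retraction of $X=f^{-1}(V)$ onto $f^{-1}(W)$, carrying $F_b$ to itself. Hence $H_k(X,F_b)\cong H_k(f^{-1}(W),F_b)$. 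Next, each path $p_j$ together with a tubular neighbourhood retracts onto its endpoint, so $f^{-1}(\im p_j)$ retracts (compatibly, again using the trivialisation of $f$ along the simply connected $\im p_j$) onto $F_{b_j}$, and this retraction is exactly the transport identification $F_{b_j}\simeq F_b$. After these retractions, $f^{-1}(W)$ is homotopy equivalent to $\bigcup_{j=1}^r f^{-1}(D_j)$ glued to one another along the common fibre $F_b$.

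The second step is a Mayer--Vietoris / excision argument for the relative groups. Because the disks $D_j$ are pairwise disjoint and meet only through the base fibre after contracting the paths, the relative chain complex splits: $H_k\!\left(\bigcup_j f^{-1}(D_j),\, F_b\right)\cong \bigoplus_{j=1}^r H_k\!\left(f^{-1}(D_j),\, F_{b_j}\right)$, where on the $j$-th summand the fibre $F_b$ is identified with $F_{b_j}$ via $p_j$. One clean way to see this is to excise: for each $j$ the pair $(f^{-1}(D_j),F_{b_j})$ includes into $(X,F_b)$, and the union of the complements of the $f^{-1}(D_j)$ (suitably thickened) can be excised since it retracts into $F_b$; iterating over $j$, or doing it in one go with a repeated excision, yields the direct sum decomposition. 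The map in the statement is the sum of these inclusion-induced maps, so it is an isomorphism.

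The main obstacle is bookkeeping rather than any deep input: one has to make the deformation retractions \emph{lift} from $V$ to $X$ compatibly with the fibre $F_b$ and with the identifications $F_{b_j}\simeq F_b$, and one must check that all the retractions and excisions are simultaneously admissible (this is where the hypothesis that $D_1,\dots,D_r,D_b$ and the $p_j$ are pairwise non-intersecting, and that $V$ is a disk, is used). A slightly delicate point is that $f$ is only locally trivial over $V\setminus\Sigma$, so the lift of the retraction must be arranged to be the identity near the critical fibres $F_{t_j}$ and to only move points over $V\setminus\Sigma$; this is standard (it is the same mechanism underlying the long exact sequences of \textcite{Lamotke_1981}), but it is the step that needs care. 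Once the reductions are in place, the splitting is purely algebraic. I would also remark that the construction is natural in $k$, so the same proof gives the decomposition for all degrees at once.
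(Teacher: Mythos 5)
Your argument is correct and takes essentially the same route as the paper, which simply defers to \textcite[\S5.3]{Lamotke_1981}: the proof there is exactly your two-step reduction, namely a deformation retraction of $V$ onto the ``spider'' $W=\bigcup_j(D_j\cup\im p_j)\cup D_b$ lifted to $X$ using local triviality over $V\setminus\Sigma$, followed by excision to split the relative homology into the local pieces, with transport along $p_j$ providing the identification $F_{b_j}\simeq F_b$. The delicate points you flag (keeping the lifted homotopy stationary near the critical fibres and over $W$) are precisely the ones handled in that reference, so there is no gap.
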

\begin{proof}
See \textcite[\S5.3]{Lamotke_1981}.
\end{proof}

We now define a notion of genericity for fibrations.
\begin{definition}
We say a singular fibre $F_{t_j}$ is \emph{of Lefschetz type} when its singular locus consists of a simple node.
We say $f\colon X\to V$ is a \emph{Lefschetz fibration} when all singular fibres are of Lefschetz type.
\end{definition}
If $t_j$ is of Lefschetz type, then the relative homology spaces $H_k(f^{-1}(D_i), F_{b_j})$ discussed in \lemref{lem:local_decomp} is particularly simple.
\begin{lemma}[\textcite{Lamotke_1981}, Main lemma]\label{lem:thimbles}
If $t_j$ is of Lefschetz type,
\begin{enumerate}
\item $H_k(f^{-1}(D_j), F_{b_j})=0$ if $k\ne n:= \dim X$.
\item $H_n(f^{-1}(D_j), F_{b_j})$ is free of rank $1$.
\end{enumerate}
\end{lemma}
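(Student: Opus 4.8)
The plan is to cut the computation down to a purely local model near the node and to read off everything from the Milnor fibration of an ordinary double point. After a translation we may assume $t_j = 0$; write $D = D_j$, $b = b_j$, $F = F_b$, let $x$ be the node of $F_0 = f^{-1}(0)$, and set $n = \dim X$ as in the statement. Since $x$ is a non-degenerate critical point of $f$, the holomorphic Morse lemma provides holomorphic coordinates $(z_1,\dots,z_n)$ on a neighbourhood $U$ of $x$ in which $f$ reads $z_1^2 + \dots + z_n^2$. Fix a closed ball $\bar B = \{\,|z|\le\epsilon\,\}\subset U$, and then shrink $D$ so that it is small relative to $\epsilon$ and Milnor's conical structure holds; in particular $f\colon \partial B\cap f^{-1}(D)\to D$ is a trivial fibration.

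The first step, which I also expect to be the main obstacle, is the localisation isomorphism
\[
H_k\bigl(\bar B\cap f^{-1}(D),\ \bar B\cap F\bigr)\ \xrightarrow{\ \sim\ }\ H_k\bigl(f^{-1}(D),\ F\bigr)
\]
induced by inclusion. Away from $x$ the map $f$ is a submersion, so for $0 < \epsilon' < \epsilon$ the restriction $f\colon f^{-1}(D)\setminus\mathrm{int}\,B_{\epsilon'}\to D$ is a proper submersion of manifolds with corners, hence a trivial fibration by Ehresmann's theorem with corners; pulling $D$ back to $b$ yields a strong deformation retraction of $f^{-1}(D)\setminus\mathrm{int}\,B_{\epsilon'}$ onto $F\setminus\mathrm{int}\,B_{\epsilon'}$ which fixes $F$. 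Patching this with the identity on $B_{\epsilon'}$, and using the product structure of $f$ along $f^{-1}(D)\cap\bigl(\bar B\setminus\mathrm{int}\,B_{\epsilon'}\bigr)$ to make the two retractions agree, contracts $f^{-1}(D)$ onto $\bigl(\bar B\cap f^{-1}(D)\bigr)\cup F$; excising $F\setminus\bar B$ then gives the displayed isomorphism. The delicate point is exactly this patching — arranging the retraction of the outer region to be compatible with the radial structure near $\partial B_{\epsilon'}$ so that the pieces glue to a single retraction; this is the technical heart of the statement, carried out in \textcite[\S5]{Lamotke_1981}, and it is the same mechanism underlying \lemref{lem:local_decomp}.

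It then remains to compute in the local model. The space $\bar B\cap f^{-1}(D)$ is contractible: the homotopy $(z,s)\mapsto sz$ for $s\in[0,1]$ stays inside it, since $|sz|\le|z|\le\epsilon$ and $f(sz) = s^2 f(z)\in D$, and it contracts the space to $x$. For the fibre, a linear change of coordinates lets us take $b > 0$ real; writing $z = u + iv$ with $u,v\in\mathbb R^n$, the equation $f(z) = b$ becomes $|u|^2 - |v|^2 = b$ together with $\langle u,v\rangle = 0$, so $\bar B\cap F$ is the closed Milnor fibre of the $A_1$ singularity, which (scaling $v$ toward $0$) deformation retracts onto $\{\,v = 0,\ |u| = \sqrt{b}\,\}\cong S^{n-1}$ — the vanishing cycle. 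Feeding these into the long exact sequence of the pair $(\bar B\cap f^{-1}(D),\ \bar B\cap F)$ and using contractibility of the total space gives
\[
H_k\bigl(\bar B\cap f^{-1}(D),\ \bar B\cap F\bigr)\ \cong\ \widetilde H_{k-1}\bigl(S^{n-1}\bigr)\,,
\]
which vanishes for $k\ne n$ and is free of rank $1$ for $k = n$. Together with the localisation isomorphism this proves both assertions.
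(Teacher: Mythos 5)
Your proof is correct and is essentially the argument behind Lamotke's Main Lemma, which the paper cites without reproducing: localisation to the Milnor ball of the $A_1$ point via the retraction/excision argument, contractibility of the local total space under $z\mapsto sz$, retraction of the local Milnor fibre onto the vanishing sphere $S^{n-1}$, and the long exact sequence of the pair. The only point stated loosely is the retraction of $\{|u|^2-|v|^2=b,\ \langle u,v\rangle=0\}$ onto $\{v=0,\ |u|^2=b\}$ by ``scaling $v$ toward $0$'', which requires the compensating rescaling of $u$ to preserve the defining equations (and one checks this keeps $|z|\le\epsilon$); this is standard and does not affect the argument.
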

This allows to define the thimble of this singular fibre.
\begin{definition}\label{def:thimble}
The \emph{thimble} $\Delta_j$ at $t_j$ is the image of one of the two generators of $H_n(f^{-1}(D_j), F_{b_j})$ in $H_n(X, F_b)$.
It depends on the choice of the isotopy class of $p_j\in \pi_1(V\setminus\Sigma, b, b_j)$ (isotopy group of paths starting at $b$ and ending at $b_j$).
\end{definition}

The following proposition realises thimbles as extensions.
\begin{proposition}
The image of $H_n(f^{-1}(D_j), F_{b_j})$ in $H_n(X, F_b)$ coincides with the image of $\tau_{\ell_j}$.
In particular $\Delta_j = \tau_{\ell_j}(\gamma)$ for some cycle $\gamma\in H_{n-1}(F_b)$.
\end{proposition}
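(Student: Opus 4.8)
The plan is to localise the statement near the critical value $t_j$ by means of \lemref{lem:local_decomp}, and then reduce it to a surjectivity statement for a purely local extension map. Write $Y_j = f^{-1}(D_j)$, let $c_j$ be the anticlockwise generator of $\pi_1(D_j\setminus\{t_j\}, b_j)$ — so that $\ell_j$ is the conjugate $p_j^{-1}c_j\,p_j$ in $\pi_1(V\setminus\Sigma, b)$ — and let $\tau^{(j)}\colon H_{n-1}(F_{b_j})\to H_n(Y_j, F_{b_j})$ denote the extension map of the restricted fibration $f|_{Y_j}\colon Y_j\to D_j$ along $c_j$. Let also $\Phi\colon \bigoplus_k H_n(f^{-1}(D_k), F_{b_k})\xrightarrow{\sim} H_n(X, F_b)$ be the isomorphism of \lemref{lem:local_decomp} and $\iota_j$ the inclusion of the $j$-th summand, so that $\Phi\circ\iota_j$ is the map carrying a generator of $H_n(Y_j,F_{b_j})$ to the thimble $\Delta_j$ (\defref{def:thimble}).

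The first step is to show that, under $\Phi$ and the transport identification $H_{n-1}(F_b)\simeq H_{n-1}(F_{b_j})$ along $p_j$, one has
\begin{equation*}
\tau_{\ell_j} \;=\; \Phi\circ\iota_j\circ\tau^{(j)}\,.
\end{equation*}
This is bookkeeping with the composition rule \eqref{eq:extensions}: expanding $\ell_j = p_j^{-1}c_j\,p_j$ gives $\tau_{\ell_j}(\gamma) = \tau_{p_j}(\gamma) + \tau_{c_j}(\delta) + \tau_{p_j^{-1}}((c_j)_*\delta)$ with $\delta = (p_j)_*\gamma$. The middle term is $\tau^{(j)}(\delta)$, since its defining chain already lies in $Y_j$. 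Splitting the last term as $\tau_{p_j^{-1}}(\delta) + \tau_{p_j^{-1}}\big((c_j)_*\delta - \delta\big)$ and using $(c_j)_*\delta - \delta = \partial\tau^{(j)}(\delta)$ (the local form of \eqref{eq:boundary_extension}), the combination $\tau_{p_j}(\gamma)+\tau_{p_j^{-1}}(\delta)$ is a cycle swept over a null-homotopic loop, hence homologous to a chain contained in $F_b$ and thus zero in $H_n(X,F_b)$; the leftover $\tau_{p_j^{-1}}\big(\partial\tau^{(j)}(\delta)\big)$ is exactly the correction $\Phi$ appends when changing the base fibre from $F_{b_j}$ to $F_b$. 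Consequently $\im\tau_{\ell_j} = \Phi\big(\iota_j(\im\tau^{(j)})\big)$, and, since $\Phi\circ\iota_j$ is injective, the first assertion of the proposition is equivalent to surjectivity of $\tau^{(j)}$.

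The second step is this surjectivity. As $t_j$ is of Lefschetz type, \lemref{lem:thimbles} gives that $H_n(Y_j, F_{b_j})$ is free of rank $1$, one of whose generators maps to $\Delta_j$ and has boundary the vanishing cycle $\delta_j\in H_{n-1}(F_{b_j})$. Since $\delta_j$ is primitive it is non-torsion, so $\partial\colon H_n(Y_j, F_{b_j})\to H_{n-1}(F_{b_j})$ is injective. Now the local monodromy $(c_j)_*$ is a Picard--Lefschetz transformation, so $(c_j)_*x - x$ is always an integer multiple of $\delta_j$, and primitivity of $\delta_j$ yields $x_0$ with $(c_j)_*x_0 - x_0 = \pm\delta_j$. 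For that $x_0$, \eqref{eq:boundary_extension} gives $\partial\tau^{(j)}(x_0) = \pm\delta_j = \partial(\pm\Delta_j)$, whence $\tau^{(j)}(x_0) = \pm\Delta_j$ by injectivity of $\partial$; so $\tau^{(j)}$ hits a generator and is surjective. Therefore $\im\tau_{\ell_j}$ is the full image of $H_n(f^{-1}(D_j), F_{b_j})$ in $H_n(X, F_b)$, and in particular $\Delta_j = \tau_{\ell_j}(\gamma)$ with $\gamma$ the transport of $\pm x_0$ back to $H_{n-1}(F_b)$.

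The first step is routine; the real content is the surjectivity of $\tau^{(j)}$ — that its image fills the whole rank-one group $H_n(Y_j, F_{b_j})$ rather than a proper finite-index sublattice. This rests on the Picard--Lefschetz shape of the local monodromy together with primitivity of the vanishing cycle (and the resulting injectivity of $\partial$), and on nothing about $X$ beyond the local Lefschetz picture.
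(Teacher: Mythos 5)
The paper states this proposition without proof, so there is no argument of the author's to compare against; I assess your proof on its own terms. Your overall route --- localise via \lemref{lem:local_decomp} to the extension map $\tau^{(j)}$ of the restricted fibration over $D_j$, then show $\tau^{(j)}$ surjects onto the rank-one group $H_n(f^{-1}(D_j),F_{b_j})$ --- is the natural one, and your first, bookkeeping step (expanding $\ell_j=p_j^{-1}c_jp_j$ with \eqref{eq:extensions} and cancelling the null-homotopic round trip along $p_j$) is fine.

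The gap is in the second step: you assert that the vanishing cycle $\delta_j$ is primitive in $H_{n-1}(F_{b_j})$ and derive everything from that, but primitivity is never justified, and it is exactly the non-trivial content of the statement. Both the injectivity of $\partial$ on $H_n(f^{-1}(D_j),F_{b_j})$ and the existence of $x_0$ with $\langle x_0,\delta_j\rangle=\pm1$ rest on it. Unwinding your own computation, Picard--Lefschetz gives $\im\tau^{(j)}=\left\{\langle x,\delta_j\rangle \st x\in H_{n-1}(F_{b_j})\right\}\cdot\Delta_j$, so the proposition is \emph{equivalent} to the pairing with $\delta_j$ attaining $\pm1$: if $\delta_j$ has divisibility $d>1$ the image has index $d$, and if $\delta_j$ is zero or torsion the image is $0$ while the target is $\Z$. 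The degenerate case really occurs for Lefschetz fibrations in the generality of \secref{sec:local_decomposition}: for a pencil of conics in $\bbP^2$ with its base points blown up, every fibre degeneration is of Lefschetz type, yet $H_1(F_b)=H_1(\bbP^1)=0$, so every $\tau_{\ell_j}$ vanishes while each $H_2(f^{-1}(D_j),F_{b_j})$ is free of rank one. So your proof must supply primitivity in the setting the paper actually uses: for an elliptic fibration the fibre is an elliptic curve with $H_1(E_b)\simeq\Z^2$ carrying the unimodular symplectic form, the local monodromy of an $I_1$ fibre is conjugate to $\left(\begin{smallmatrix}1&1\\0&1\end{smallmatrix}\right)$, and its fixed vector $\delta_j$ is primitive, whence some $x_0$ pairs to $1$ with it. With that sentence added (or with primitivity of the vanishing cycle stated as a hypothesis of the proposition), your argument closes; without it, the key step is circular.
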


From \lemref{lem:local_decomp} and \lemref{lem:thimbles} we see that when $V$ is a disk and $X\to V$ is a Lefschetz fibration, thimbles generate $H_n(X, F_b)$ freely.\\

One upside of describing cycles in this manner is that we may evaluate the intersection product away from the singular fibres.
Indeed, given two closed extensions along two paths $\ell_1, \ell_2\in \pi(V\setminus\Sigma)$, one may always deform the loop so that they intersect at finitely many points.
The intersection product is then only a finite sum of signed intersection products of the $n-1$-cycles that are being extended.
In particular, the knowledge of the monodromy representation and the intersection product on the homology of the fibre is sufficient to compute the intersection of the two cycles.
In fact, this intersection product can be derived from a pseudo-intersection product on the thimbles, or rather, a pseudo-lattice structure on $H_k(X^*, F_b)$.
For more details, see \textcite[\S5.2]{LairezEtAl_2024}.

\section{Homology of elliptic surfaces}\label{sec:elliptic_surfaces}

We start by recalling some definitions and results of \textcite{Pichon-Pharabod_2025} relating to elliptic surfaces that are relevant to our discussion.
For a more detailed account of these result, we point to this paper, and for more background on elliptic surfaces we point to \textcite{Miranda_1989, SchuttShioda_2010, Esole_2017}.
\begin{definition}
Let $V$ be a complex curve.
An \emph{elliptic surface} over $V$ is a complex surface $S$ equipped with a proper surjective map $f\colon S\to V$ such that
\begin{itemize}
\item for all but finitely many $t\in V$, the fibre $E_t = f^{-1}(t)$ is an elliptic curve (a smooth genus 1 complex curve).
\item no fibre contains a smooth rational curve of self-intersection $-1$.
\end{itemize}
\end{definition}
The second condition ensures that $S$ is relatively minimal, as such curves can always be blown down.
We will consider elliptic surfaces over $\bbP^1$.
We will use the shorthand $S/\bbP^1$ to designate the surface $S$ equipped with its map.

\begin{definition}
A section of an elliptic surface $S/\bbP^1$ is a map $\pi\colon \bbP^1\to S$ such that $f\circ\pi = \operatorname{id}_{\bbP^1}$.
\end{definition}
In what follows we consider an elliptic surface with a section, and fix such a section which we call the \emph{zero section} and denote $O$.

The fibre types of an elliptic fibration have been classified by \textcite{Kodaira_1963} into
\begin{itemize}
\item two infinite families $\mathit{I}_n$ and $\mathit{I}_n^*$, where $n\ge 0$;
\item and six types $\mathit{II}$, $\mathit{III}$, $\mathit{IV}$, $\mathit{II}^*$, $\mathit{III}^*$, $\mathit{IV}^*$.
\end{itemize}
The type of a fibre is entirely determined by its monodromy, the conjugation class of which is given in \tabref{tab:kodaira}.

\begin{table}[]
\centering
\begin{tabular}{ccc}
\toprule
Type & Monodromy &  \thead{Euler \\characteristic} \\ \midrule
  $\mathit{I}_n, n\ge 0$  &  $\left(\begin{array}{cc} 1& n \\ 0 & 1 \end{array}\right)$                &    $n$   \\
  $\mathit{II}$  &  $\left(\begin{array}{cc} 1& 1 \\ -1 & 0 \end{array}\right)$               &    $2$   \\
  $\mathit{III}$  &  $\left(\begin{array}{cc} 0& 1 \\ -1 & 0 \end{array}\right)$                &    $3$   \\
  $\mathit{IV}$  &  $\left(\begin{array}{cc} 0& 1 \\ -1 & -1 \end{array}\right)$                &    $4$   \\\bottomrule
\end{tabular}
\quad 
\begin{tabular}{ccc}
\toprule
Type & Monodromy & \thead{Euler \\characteristic} \\ \midrule
  $\mathit{I}^*_n, n\ge 0$  &  $\left(\begin{array}{cc} -1& -n \\ 0 & -1 \end{array}\right)$                &    $n+6$   \\
  $\mathit{II}^*$  &  $\left(\begin{array}{cc} 0& -1 \\ 1 & 1 \end{array}\right)$               &    $10$   \\
  $\mathit{III}^*$  &  $\left(\begin{array}{cc} 0& -1 \\ 1 & 0 \end{array}\right)$               &    $9$   \\
  $\mathit{IV}^*$  &  $\left(\begin{array}{cc} -1&-1 \\ 1 & 0 \end{array}\right)$                &    $8$   \\\bottomrule
\end{tabular}
\caption{The Kodaira classification.}
\label{tab:kodaira}
\end{table}

Certain cycles of $H_2(S^*, E_b)$ are supported entirely on a single singular fibre and cannot be realised as extensions.
In particular, they define closed cycles, and are generated by components of singular fibres.
\begin{definition}
The \emph{lattice of components of singular fibres} $\operatorname{Sing}(S/\bbP^1)$ is the sublattice of $H_2(S^*, E_b)$ generated by components of singular fibres.
\end{definition}

We now define the primary lattice of $S$, slightly deviating from the definition of \textcite[Definition 3]{Pichon-Pharabod_2025} to better suit what follows.
\begin{definition}
The \emph{primary lattice} $\Prim(S)$ is the sublattice of $\ker\left(\partial\colon H_3(S^*, E_b)\to H_2(E_b)\right)$ generated by extensions and fibre components:
\begin{equation}
\Prim(S/\bbP^1) :=\Hpara_3(S/\bbP^1) \oplus \operatorname{Sing}(S/\bbP^1)\,.
\end{equation}
\end{definition}

\begin{proposition}\label{prop:fullrank_ES}
The primary lattice $\Prim(S/\bbP^1)$ has full rank in $\ker\partial$.
\end{proposition}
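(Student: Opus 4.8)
The plan is to compare $\ker\partial$ inside $H_3(S^*, E_b)$ with the parabolic homology together with the singular-fibre contributions, using the long exact sequence of the pair $(S^*, E_b)$ and the local decomposition of \lemref{lem:local_decomp}. Concretely, $\ker\left(\partial\colon H_3(S^*, E_b)\to H_2(E_b)\right)$ fits into
\begin{equation*}
H_3(E_b)\to H_3(S^*)\to \ker\partial\to 0\,,
\end{equation*}
so $\ker\partial$ is a quotient of $H_3(S^*)$ (and since $E_b$ is a curve, $H_3(E_b)=0$, so in fact $\ker\partial\simeq H_3(S^*)$). Dually, $H_3(S^*, E_b)$ is generated by the images of the extension maps $\tau_{\ell_j}$ together with cycles living entirely in singular fibres — this is exactly the content of \lemref{lem:local_decomp}, since each local piece $H_3(f^{-1}(D_j), E_{b_j})$ decomposes into an extension part (governed by $\tau_{\ell_j}$ via the composition rule, as in the proposition identifying thimbles with extensions) and a part supported on the components of $F_{t_j}$, which is $\operatorname{Sing}(S/\bbP^1)$.

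First I would fix the setup of \secref{sec:local_decomposition}: base point $b$, disks $D_j$ around the critical values, paths $p_j$, loops $\ell_j$, and invoke \lemref{lem:local_decomp} to write $H_3(S^*, E_b) = \bigoplus_j H_3(f^{-1}(D_j), E_{b_j})$. Next I would analyze each summand: a relative $3$-cycle in $f^{-1}(D_j)$ either has nonzero boundary in $H_2(E_{b_j})$ — in which case, after subtracting an appropriate extension $\tau_{\ell_j}(\gamma)$ (which realises the same boundary by \eqref{eq:boundary_extension}), the remainder is a closed $3$-cycle supported in $f^{-1}(D_j)$, hence (up to the fibre class) supported on the singular fibre $F_{t_j}$ — or it already has zero boundary. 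The upshot is that $\ker\partial$ is spanned modulo $H_3(E_b)$ by closed extensions and by classes of $\operatorname{Sing}(S/\bbP^1)$, i.e. $\ker\partial \subseteq \Hpara_3(S/\bbP^1) + \operatorname{Sing}(S/\bbP^1)$ after passing to the quotient by the fibre. The reverse inclusion is immediate from the definitions, since closed extensions lie in $\ker\partial$ by \eqref{eq:boundary_extension} and fibre components are closed cycles with zero boundary in $H_2(E_b)$. Finally I would note that $\Prim(S/\bbP^1)$ is defined as the internal direct sum $\Hpara_3 \oplus \operatorname{Sing}$, and a full-rank sublattice of the sum $\Hpara_3 + \operatorname{Sing}$ inside $\ker\partial$, so the rank equality is what is being asserted; since over $\Q$ the sum and the direct sum have the same dimension as the spanning set, this gives full rank.

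The main obstacle is controlling the interaction between the extension part and the singular-fibre part in each local piece — that is, showing that after removing an extension the residual closed cycle really is a combination of fibre components and the generic fibre class, rather than some more exotic closed $3$-cycle in $f^{-1}(D_j)$. This requires understanding $H_3(f^{-1}(D_j), E_{b_j})$ and $H_3(f^{-1}(D_j))$ concretely; since $f^{-1}(D_j)$ retracts onto the singular fibre $F_{t_j}$ and the boundary map $\partial$ has image exactly the monodromy-invariants complement (the lattice of vanishing cycles), a Mayer–Vietoris or direct retraction argument pins down the kernel. One should also be careful that the decomposition in \lemref{lem:local_decomp} is stated for $V$ a disk, which is fine since $\bbP^1$ minus a small disk around one extra point is a disk; alternatively one works over $\bbP^1\setminus\{\infty\}$ as in \figref{fig:thimble}. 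A clean way to organise all of this is to cite the corresponding statement (Definition 3 and the surrounding discussion, or the proof of full rank of the primary lattice) from \textcite{Pichon-Pharabod_2025}, since the present definition only mildly deviates from it, and then remark that the deviation does not affect the rank computation.
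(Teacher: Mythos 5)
Your proposal is correct in substance, but it takes a genuinely more self-contained route than the paper: the paper's entire proof of this proposition is the one-line citation that it is a corollary of the proof of Lemma~15 in \textcite{Pichon-Pharabod_2025}, which is exactly the fallback you offer in your final sentence. What you supply in addition is the argument that sits behind that lemma: decompose the relative homology into local pieces via \lemref{lem:local_decomp}, and in each piece split a relative cycle into an extension along $\ell_j$ accounting for its boundary, plus a closed cycle supported in $f^{-1}(D_j)$, which the retraction onto $F_{t_j}$ identifies with a combination of fibre components modulo the fibre class. This buys the reader a checkable proof without opening the reference, at the cost of one step that must be phrased more carefully: the boundary of a local relative cycle is \emph{not} always realised integrally by an extension $\tau_{\ell_j}(\gamma)$ --- for an $\mathit{I}_n$ fibre the image of $\ell_{j*}-\operatorname{id}$ is the index-$n$ sublattice $n\Z\delta$ of the image $\Z\delta$ of the local boundary map --- so the subtraction step in your second paragraph only works after tensoring with $\Q$. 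Since the proposition asserts only full rank, this is harmless, but you should say so explicitly rather than invoke \eqref{eq:boundary_extension} as if it gave an integral surjection onto the local boundaries. (Note also that the indices in the statement are inherited from the threefold case; for the surface $S$ the relevant map is $H_2(S^*,E_b)\to H_1(E_b)$, so the ambient fibre class $H_2(E_b)\ne 0$ does not vanish as your opening long-exact-sequence remark suggests, and the quotient by the fibre class that you mention in passing is genuinely needed.)
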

\begin{proof}
This is a corollary of the proof of Lemma 15 in \textcite{Pichon-Pharabod_2025}.
\end{proof}

Generic elliptic surfaces are Lefschetz fibrations, meaning they only have $I_1$ singularities.
In this case one may define thimbles as is done in \defref{def:thimble}, after choosing appropriate disks and paths.
In the case where $S$ has more complicated fibres, we fall back to the generic case by means of a \emph{morsification}.
\begin{definition}
A \emph{morsification} of $S/\bbP^1$ is the data of a disk $D$ and a threefold $\tilde S$ such that there is a commutative diagram
\begin{equation}
    \begin{tikzcd} [sep = .5 cm]
      \tilde S \arrow[r, "\tilde f"] \arrow[rr, dashed, bend right, "\eta"]& \bbP^1\times D \arrow[r, "p"]& D
    \end{tikzcd}\,,
\end{equation}
  where $p$ is the projection onto the second coordinate, satisfying
  \begin{itemize}
  \item $\eta: \tilde S\to D$ is a locally trivial smooth fibration;
  \item $\tilde f|_{S_0}: S_0 \to \bbP^1$ coincides with $S\to \bbP^1$;
  \item for $u\in D\setminus\{0\}$, $\tilde f|_{S_u}: S_u \to \bbP^1$ is a Lefschetz elliptic surface;
  \item $\eta$ has no critical values.
  \end{itemize}
\end{definition}
A theorem of Moishezon shows that every elliptic surface admits a morsification \parencite[Theorem 8]{Moishezon_1977}.
This allows to generalise the definition of thimble to elliptic surfaces that are not of Lefschetz type.
One consequence of this theorem is the following equivalent of \lemref{lem:thimbles}.
\begin{lemma}\label{lem:main_lemma}
For any $V\subset\bbP^1$ and $b\in V$
\begin{enumerate}
\item $H_k(f^{-1}(V), F_{b})=0$ if $k\ne n:= \dim X$.
\item $H_n(f^{-1}(V), F_{b})$ is free.
\end{enumerate}
\end{lemma}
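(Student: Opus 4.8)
The plan is to reduce the statement for an arbitrary elliptic surface to the Lefschetz case via a morsification, and then invoke \lemref{lem:local_decomp} and \lemref{lem:thimbles} together with the just-cited theorem of Moishezon. First I would fix $V\subset\bbP^1$ and $b\in V\setminus\Sigma$, choose a morsification $\tilde S\to \bbP^1\times D$ of $S/\bbP^1$, and restrict it over $V$: set $\tilde X := \tilde f^{-1}(V\times D)$, so that $\eta\colon \tilde X\to D$ is again a locally trivial smooth fibration with no critical values, the fibre over $0$ being $f^{-1}(V)$ and the fibre over $u\ne 0$ being $g_u^{-1}(V)$ for the Lefschetz elliptic surface $g_u := \tilde f|_{S_u}$. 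Shrinking $D$ to a contractible disk, $\eta$ is then a trivial fibration, so $\tilde X \simeq f^{-1}(V)\times D$ as topological spaces, and in particular $f^{-1}(V)$ is homotopy equivalent (indeed diffeomorphic) to $g_u^{-1}(V)$ for any $u\in D\setminus\{0\}$. The same trivialisation carries the fibre $F_b$ of $f$ to the fibre of $g_u$ over $b$, so it respects the pair; hence $H_k(f^{-1}(V),F_b)\cong H_k(g_u^{-1}(V), (E_u)_b)$ for all $k$, where the right-hand side refers to the Lefschetz surface $S_u$.

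It then suffices to prove both claims for a Lefschetz elliptic surface $g\colon S'\to \bbP^1$ and $V\subset\bbP^1$, $b\in V$. If $V$ is homeomorphic to a disk this is immediate: \lemref{lem:local_decomp} gives $H_k(g^{-1}(V),E'_b)\cong\bigoplus_j H_k(g^{-1}(D_j),E'_{b_j})$, the sum running over the critical values of $g$ lying in $V$, and \lemref{lem:thimbles} says each summand vanishes unless $k=n=\dim S'=2$ and is free of rank $1$ in that case; so the direct sum is free, proving both (1) and (2). For a general open $V\subset\bbP^1$ (e.g.\ $V=\bbP^1\setminus\{\infty\}$ or $V=\bbP^1$ itself) one can still reduce to the disk case: choose a generic base point $b$ and, as in the setup of \secref{sec:local_decomposition}, disjoint disks $D_j$ around the critical values in $V$ and disjoint paths $p_j$ from $b$ to points $b_j\in D_j$; let $W\subset V$ be a regular neighbourhood of the union of these paths and disks, which is homeomorphic to a disk. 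The inclusion $g^{-1}(W)\hookrightarrow g^{-1}(V)$ is a homotopy equivalence of pairs over $(E'_b)$, since $V$ deformation retracts onto $W$ away from the critical values and the fibration is locally trivial there; this deformation retraction lifts to $g^{-1}(V)$, carrying $E'_b$ to itself. Hence $H_k(g^{-1}(V),E'_b)\cong H_k(g^{-1}(W),E'_b)$, and the latter is handled by the disk case above.

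The main obstacle I expect is the careful bookkeeping around the morsification: one must check that the trivialisation of $\eta$ can be chosen to respect the section fibre $F_b$ uniformly in $u$ (so that the isomorphism is one of pairs, not merely of total spaces), and that "no critical values" for $\eta$ genuinely gives a product over a contractible $D$ — this is where properness of $\tilde f$ and Ehresmann's theorem enter. A secondary technical point is the reduction from arbitrary open $V$ to a disk-shaped $W$: one needs that $V\setminus\Sigma$ deformation retracts onto a neighbourhood of a wedge-like skeleton through $b$, which is standard for open subsets of $\bbP^1$ but should be stated. Once these two reductions are in place, the conclusion is purely formal from \lemref{lem:local_decomp} and \lemref{lem:thimbles}, a finite direct sum of free modules of rank $0$ or $1$ being free and concentrated in degree $n$.
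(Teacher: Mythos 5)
The paper offers no proof of this lemma at all --- it is asserted as ``one consequence'' of Moishezon's morsification theorem and an ``equivalent'' of \lemref{lem:thimbles} --- so your write-up is a genuine expansion of what the author leaves implicit, and the route you take (restrict the morsification over $V$, trivialise the resulting family over the contractible disk $D$ by Ehresmann so as to identify the pair $(f^{-1}(V),F_b)$ with the corresponding pair for a nearby Lefschetz surface, then apply \lemref{lem:local_decomp} and \lemref{lem:thimbles}) is surely the intended one. For $V$ homeomorphic to a disk --- which is the only case the paper actually uses, namely the disks $D_j$ in \propref{prop:thimbles_FP_generic} and in the definition of thimbles --- your argument is correct, modulo the two technical points you yourself flag (keeping the critical values of $S_u$ inside $V$ for $u$ small, and choosing the trivialising vector field tangent to $f^{-1}(\partial V)$ and to $F_b$ so that the identification respects the pair); these are standard.

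The genuine problem is your final paragraph, where you claim the reduction to a disk-shaped $W$ also covers ``$V=\bbP^1$ itself.'' It does not: $\bbP^1$ does not deformation retract onto any disk, and in fact the lemma is \emph{false} for $V=\bbP^1$, since the long exact sequence of the pair gives $H_4(S,E_b)\cong H_4(S)\cong\Z\neq 0$ while $n=2$. It also fails for non-simply-connected $V$: if $V$ is an annulus containing no critical value, then $f^{-1}(V)\simeq E_b\times S^1$ up to homotopy and $H_k(f^{-1}(V),E_b)\cong H_{k-1}(E_b)$, which is nonzero for $k=1$ and $k=3$. So the statement as printed (``for any $V\subset\bbP^1$'') is an overstatement of the paper's that you have inherited and amplified; the hypothesis that genuinely makes your proof (and the lemma) work is that $V$ is a disk, or more generally that $V$ deformation retracts, relative to a neighbourhood of $\Sigma\cap V$, onto a disk $W$ containing $\Sigma\cap V$ --- as happens for $V=\C$ but not for $V=\bbP^1$ or an annulus. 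You should state and use that hypothesis explicitly rather than attempt the general case.
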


Similarly, it allows to extend \defref{def:thimble} to any elliptic surface.

\begin{definition}
The thimbles of $S/\bbP^1$ are the thimbles of the generic fibre $\tilde S_t$, $t\in D\setminus\{0\}$ of a morsification of $S/\bbP^1$.
\end{definition}
\begin{remark}
Of course this definition depends on a choice of morsification, and of loops $\ell_i$ in the base of the morsification.
However this choice does not affect the rest of this paper, so we can make it arbitrarily.
\end{remark}

\section{Homology of fibre products}\label{sec:fibre_products}

We consider $S^\sindex{1}$ and $S^\sindex{2}$ two non-isotrivial elliptic surfaces over $\bbP^1$.
We keep the notations of \secref{sec:elliptic_surfaces} but annote them with the superscript $\sindex{i}$ for $i=1,2$ to refer to either the first or second elliptic surface, e.g., $\Sigma^\sindex{1}$, $f^\sindex{1}$, etc.. 
Furthermore, the index $i'$ will indicate the value that $i$ does not take, so that $(i,i')\in\{(1,2),(2,1)\}$.

Let $T = S^\sindex{1}\times_{\bbP^1} S^\sindex{2}$ be the fibre product of $S^\sindex{1}$ and $S^\sindex{2}$ naturally equipped with a map~$f\colon T\to \bbP^1$.
For $t\in \bbP^1$, we denote $F_t := f^{-1}(t) = E^\sindex{1}_t\times E^\sindex{2}_t$.

Our goal is to leverage the description of the homology of $S^\sindex{1}$ and $S^\sindex{2}$ recalled in \secref{sec:elliptic_surfaces} to obtain an explicit handle on cycles of $T$.
The general principle is the following: $n+1$-cycles may be obtained by extensions of $n$-cycles along loops.
Linear relations between extensions can be tracked by expressing them in terms of a basis of \emph{thimbles}.
In the case of elliptic surface, it was shown in \textcite{Pichon-Pharabod_2025} that for generic elliptic surfaces (i.e. with only $I_1$ singular fibres) such extensions, along with the section and fibre class, generate the full second homology group of the elliptic surface.
We will show here that the situation is even simpler, as there is no class stemming from the generic fibre, nor from a section.
However the direct approach employed in \textcite{Pichon-Pharabod_2025} is not applicable here, as the fibration is not of Lefschetz type (each singular fibre always has an even number of vanishing cycles, so in particular never only one).
Instead we rely on our understanding of the homology of $S^\sindex{1}$ and~$S^\sindex{2}$ to access the homology of $T$.

We first work under the simplifying hypothesis that the set of critical values of $S^{\sindex{1}}$ and $S^{\sindex{2}}$ are disjoint.
We will later alleviate this condition.
\begin{definition}
The threefold $T$ is said to be \emph{generic} if $\Sigma^{\sindex{1}}\cap \Sigma^{\sindex{2}} = \emptyset$, so all fibres are either of type $I_0\times F$ or $F\times I_0$, where $F$ is any Kodaira type. 
\end{definition}

\subsection{The generic case}
Assume $T$ is generic.
We will recover a description of $H_3(T)$ in terms of thimbles of $S^\sindex{1}$ and $S^\sindex{2}$.
We define the thimbles of $T$ to be the tensor products of thimbles of $S^\sindex{i}/\bbP^1$ with elements of an arbitrary basis of $E_b^\sindex{i'}$, for $i=1,2$.
We start by relating thimbles of the threefold to thimbles of the elliptic surfaces.

\begin{proposition}\label{prop:thimbles_FP_generic}
We have that
\begin{equation}
	H_k(T^*, F_b) \simeq 
	\bigoplus_{i\in\{1,2\}} H_2\left({S^\sindex{i}}^*, E^\sindex{i}_b\right) \otimes H_{k-2}\left(E^\sindex{i'}_b\right)\,.
\end{equation}
In words, when $k=3$, thimbles of $T/\bbP^1$ consist exactly of thimbles of $S^\sindex{i}/\bbP^1$ tensored with a 1-cycle of $E^\sindex{i'}$, for $i\in\{1,2\}$.
\end{proposition}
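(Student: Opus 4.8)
The plan is to reduce to the singular fibres one at a time using the local decomposition of \lemref{lem:local_decomp}, and then to exploit that near a critical value of $S^\sindex{i}$ the other elliptic surface forms a trivial family, so that the corresponding local relative homology group becomes a product to which a relative Künneth formula applies.

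First I would apply \lemref{lem:local_decomp} to $f\colon T\to\bbP^1$ (over an affine disk $V\subset\bbP^1$ containing $\Sigma = \Sigma^\sindex{1}\sqcup\Sigma^\sindex{2}$, the union being disjoint since $T$ is generic): choosing pairwise disjoint disks $D_j$ around the $t_j\in\Sigma$ and paths $p_j$ from $b$ to $b_j\in D_j$ gives
\begin{equation*}
H_k(T^*,F_b)\;\simeq\;\bigoplus_{t_j\in\Sigma}H_k\bigl(f^{-1}(D_j),\,F_{b_j}\bigr)\,.
\end{equation*}
Fix $j$ and say $t_j\in\Sigma^\sindex{i}$, so that $t_j\notin\Sigma^\sindex{i'}$; after shrinking $D_j$ we may assume $D_j\cap\Sigma^\sindex{i'}=\emptyset$, whence ${f^\sindex{i'}}^{-1}(D_j)\to D_j$ is a locally trivial fibration over a contractible base, hence trivial. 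Since the fibre product restricts to open subsets, this produces a homeomorphism of pairs, compatible with the inclusions into $T^*$,
\begin{equation*}
\bigl(f^{-1}(D_j),\,F_{b_j}\bigr)\;\simeq\;\bigl({f^\sindex{i}}^{-1}(D_j),\,E^\sindex{i}_{b_j}\bigr)\times E^\sindex{i'}_{b_j}\,.
\end{equation*}

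Next I would invoke the Künneth theorem for pairs. As $E^\sindex{i'}_{b_j}$ is a real two-torus its homology is free, so the Tor terms vanish and
\begin{equation*}
H_k\bigl(f^{-1}(D_j),\,F_{b_j}\bigr)\;\simeq\;\bigoplus_{p+q=k}H_p\bigl({f^\sindex{i}}^{-1}(D_j),\,E^\sindex{i}_{b_j}\bigr)\otimes H_q\bigl(E^\sindex{i'}_{b_j}\bigr)\,.
\end{equation*}
By \lemref{lem:main_lemma} applied to $S^\sindex{i}$ (with base disk $D_j$ and point $b_j$), the factor $H_p\bigl({f^\sindex{i}}^{-1}(D_j),\,E^\sindex{i}_{b_j}\bigr)$ vanishes unless $p=2$, and is free when $p=2$; hence only the term with $(p,q)=(2,k-2)$ survives. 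Summing over $t_j\in\Sigma$, grouping the two cases $i=1,2$, and noting that the transport along $p_j$ used on the $E^\sindex{i'}$-factor (inherited from the trivial fibration ${f^\sindex{i'}}^{-1}(D_j)\to D_j$, which agrees with transport in $T$) identifies $H_{k-2}(E^\sindex{i'}_{b_j})$ with $H_{k-2}(E^\sindex{i'}_b)$ uniformly in $j$, so that this factor pulls out of the inner sum, I would get
\begin{equation*}
H_k(T^*,F_b)\;\simeq\;\bigoplus_{i\in\{1,2\}}\Bigl(\,\bigoplus_{t_j\in\Sigma^\sindex{i}}H_2\bigl({f^\sindex{i}}^{-1}(D_j),\,E^\sindex{i}_{b_j}\bigr)\Bigr)\otimes H_{k-2}\bigl(E^\sindex{i'}_b\bigr)\,.
\end{equation*}
Finally, applying \lemref{lem:local_decomp} to $S^\sindex{i}/\bbP^1$, whose critical values in $V$ are exactly the $t_j\in\Sigma^\sindex{i}$, identifies the inner sum with $H_2\bigl({S^\sindex{i}}^*,E^\sindex{i}_b\bigr)$, which is the claim. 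For $k=3$ the surviving factor is $H_1(E^\sindex{i'}_b)\cong\Z^2$, so each thimble of $S^\sindex{i}$ gives rise to two thimbles of $T$, one per basis element of $H_1(E^\sindex{i'}_b)$, recovering the description in words.

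I expect the main effort to be the bookkeeping of identifications rather than any deep point: one must check that restricting the fibre product over $D_j$ to a genuine product of pairs is compatible with the inclusions of \lemref{lem:local_decomp}, and that the transport along $p_j$ on the $E^\sindex{i'}$-factor computed inside $T$ coincides with that computed inside $S^\sindex{i'}$ --- which is the case because the trivialisation of ${f^\sindex{i'}}^{-1}(D_j)$ propagates along $p_j$. The relative Künneth step itself is routine, given that $H_*(E^\sindex{i'}_{b_j})$ is free (so the Tor summands vanish) and that $\bigl({f^\sindex{i}}^{-1}(D_j),\,E^\sindex{i}_{b_j}\bigr)$ is a good pair.
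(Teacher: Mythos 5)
Your proposal is correct and follows essentially the same route as the paper: local decomposition via \lemref{lem:local_decomp}, trivialisation of the non-degenerating factor over each disk $D_j$, the relative Künneth formula, and \lemref{lem:main_lemma} to isolate the degree-$2$ term, before reassembling with \lemref{lem:local_decomp} applied to $S^\sindex{i}$. The only difference is that you make explicit the Tor-vanishing and the compatibility of transports along $p_j$, which the paper leaves implicit.
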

\begin{proof}
We set $\{t_1, \dots, t_r\} = \Sigma^\sindex{1}\cup \Sigma^\sindex{2}$, and choose $D_1, \dots, D_r$ and $b_1, \dots, b_r$ as in \lemref{lem:local_decomp}, and it follows from this lemma that we have
\begin{equation}
	H_k(T^*, F_b) = \bigoplus_{i\in\{1,2\}} \bigoplus_{t_j\in \Sigma^\sindex{i}} H_k\left(f^{-1}(D_j), F_{b_j}\right)\,.
\end{equation}
Let $t_j\in \Sigma^\sindex{1}$. The disk $D_i$ is a simply connected open subset not intersecting $\Sigma^\sindex{2}$.
Therefore there is a trivialisation of $(f^\sindex{2})^{-1}(D_j) \simeq E^\sindex{2}_{b_j}\times D_j$, and thus $f^{-1}(D_j)\simeq \left(f^\sindex{1}\right)^{-1}(D_j)\times E^\sindex{2}_{b_j}$.
In particular 
\begin{align}
	H_k\left(f^{-1}(D_j), F_{b_j}\right) &\simeq H_k\left((f^\sindex{1})^{-1}(D_j)\times E^\sindex{2}_{b_j}, E^\sindex{1}_{b_j}\times E^\sindex{2}_{b_j}\right)\\
		&\simeq \bigoplus_{l=0}^k H_{l}\left((f^\sindex{1})^{-1}(D_j), E^\sindex{1}_{b_j}\right)\otimes H_{k-l}\left( E^\sindex{2}_{b_j}\right)\\
		&\simeq H_{2}\left((f^\sindex{1})^{-1}(D_j), E^\sindex{1}_{b_j}\right)\otimes H_{k-2}\left( E^\sindex{2}_{b_j}\right)\,.
\end{align}
Here the Künneth formula allows to go from the first line to the second, and \lemref{lem:main_lemma} from the second to the third.
Therefore putting everything back together
\begin{equation}
	H_k(T^*, F_b) = \bigoplus_{i\in\{1,2\}} \bigoplus_{t_j\in \Sigma^\sindex{i}} H_{2}\left((f^\sindex{i})^{-1}(D_j), E^\sindex{i}_{b_j}\right)\otimes H_{k-2}\left(E^\sindex{i'}_{b_j}\right)\quad\,,
\end{equation}
and using \lemref{lem:local_decomp} again, we obtain the claim.
\end{proof}

We now use techniques similar to \textcite{LairezEtAl_2024, Pichon-Pharabod_2025} to relate $H_3(T, F_b)$ and then $H_3(T)$ to thimbles.
The first step is to add the fibre $F_\infty$ back.
As a consequence, this kills extensions around the simple loop $\ell_r\cdots \ell_1$ around $\infty$, as this loop becomes trivial in $\pi_1(\bbP^1\setminus\Sigma, b)$ (whereas it was not in $\pi_1(\bbP^1\setminus\Sigma, b)$).
We denote the extension map $\tau_{\ell_r\cdots \ell_1}$ by $\tau_\infty$ to ease the notation.
The second step glues thimbles together to obtain closed 3-cycles --- this amounts to taking the kernel of the boundary map.
We summarise this in the following theorem.

\begin{theorem}
We have that
\begin{equation}
	H_3(T, F_b)\simeq \frac{H_3(T^*, F_b)}{\im \left(\tau_\infty\colon H_2(F_b)\to H_3(T^*, F_b)\right)}\,,
\end{equation}
and furthermore
\begin{equation}
	H_3(T)\simeq \frac{\ker\left(\partial\colon H_3(T^*, F_b)\to H_2(F_b)\right)}{\im \left(\tau_\infty\colon H_2(F_b)\to H_3(T^*, F_b)\right)}\,.
\end{equation}
\end{theorem}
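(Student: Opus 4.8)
The plan is to deduce both isomorphisms from the long exact sequences of the relevant pairs, combined with the local decomposition established in \propref{prop:thimbles_FP_generic} and \lemref{lem:local_decomp}. The key observation, just as in \textcite{LairezEtAl_2024, Pichon-Pharabod_2025}, is that adding the fibre $F_\infty$ back to $T^* = T\setminus F_\infty$ (equivalently, enlarging the base from $\bbP^1\setminus\{\infty\}$ to $\bbP^1$) changes nothing about the fibres over finite points but makes the loop $\ell_r\cdots\ell_1$ around $\infty$ contractible. The first step is to compare $H_3(T^*, F_b)$ and $H_3(T, F_b)$ via the exact sequence of the triple $(T, T^*, F_b)$, or more directly by excision/Mayer--Vietoris around the added fibre, to show that the only relations introduced are exactly those in the image of $\tau_\infty$.

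First I would set $U = \bbP^1\setminus\Sigma'$ for $\Sigma' = \Sigma^\sindex{1}\cup\Sigma^\sindex{2}\cup\{\infty\}$ and take $b$ generic. Over a disk $D_\infty$ around $\infty$ the fibration $f^{-1}(D_\infty)\to D_\infty$ has no critical value other than possibly $\infty$ itself, but since $T\to\bbP^1$ is a fibre product of elliptic surfaces whose fibre over $\infty$ we are including, one needs to understand $H_*(f^{-1}(D_\infty), F_{b_\infty})$. By the analogue of \lemref{lem:main_lemma} applied fibrewise to $S^\sindex{1}$ and $S^\sindex{2}$ over $D_\infty$ together with the Künneth formula (exactly as in the proof of \propref{prop:thimbles_FP_generic}), this relative homology is concentrated in degree $3$ and is free; its image in $H_3(T, F_b)$ is precisely $\im\tau_\infty$, since a generator is obtained by extending a $2$-cycle of $F_b$ around $\ell_r\cdots\ell_1$. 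Then \lemref{lem:local_decomp}, now applied over $\bbP^1$ with the critical values $\Sigma'$, gives $H_3(T, F_b) \simeq \bigoplus_{t_j\in\Sigma^\sindex{1}\cup\Sigma^\sindex{2}} H_3(f^{-1}(D_j), F_{b_j}) \oplus H_3(f^{-1}(D_\infty), F_{b_\infty})$, and comparing with the decomposition of $H_3(T^*, F_b)$ from \propref{prop:thimbles_FP_generic} identifies the cokernel of $H_3(T^*, F_b)\hookrightarrow H_3(T, F_b)$... wait, the map goes the other way: $H_3(T^*,F_b)$ surjects onto $H_3(T,F_b)$ with kernel generated by the extra summand, giving the first isomorphism. (One must check $\tau_\infty$ is injective on the relevant quotient, or rather simply that the kernel of the surjection is exactly $\im\tau_\infty$; this follows from the freeness in \lemref{lem:main_lemma}.)

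For the second isomorphism I would invoke \propref{prop:gluing_thimbles} applied to $X = T$ with its fibration over $\bbP^1$ and base point $b$: this gives $H_3(T)/\iota_* H_3(F_b) \simeq \ker\bigl(\partial\colon H_3(T, F_b)\to H_2(F_b)\bigr)$. Since $F_b = E_b^\sindex{1}\times E_b^\sindex{2}$ is a product of elliptic curves, $H_3(F_b)\simeq H_1\otimes H_2 \oplus H_2\otimes H_1$ is nonzero, so in principle $\iota_* H_3(F_b)$ could contribute; here one uses that the fibre class in $H_3$ is killed once we work modulo $\im\tau_\infty$, or more precisely that the statement is already phrased after quotienting — so I would instead argue directly that $\ker(\partial\colon H_3(T^*,F_b)\to H_2(F_b))$ maps onto $\ker(\partial\colon H_3(T,F_b)\to H_2(F_b))$ with kernel $\im\tau_\infty$ (which lies in $\ker\partial$ because $\ell_r\cdots\ell_1$ is contractible in $\bbP^1$, so its monodromy is trivial and \eqref{eq:boundary_extension} forces $\partial\tau_\infty = 0$), and then combine with the first isomorphism. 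Taking $\ker\partial$ on the numerator of the first displayed isomorphism and using that $\im\tau_\infty\subset\ker\partial$ yields exactly the claimed description of $H_3(T)$.

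The main obstacle I anticipate is the bookkeeping around the fibre $F_b$ itself: unlike the Lefschetz-surface case, $H_3(F_b)\neq 0$, so one has to be careful that $\iota_* H_3(F_b)$ either vanishes in $H_3(T)$ or is already accounted for. The cleanest route is probably to show that every class of $\iota_* H_3(F_b)$ is, modulo $\im\tau_\infty$, realised as a closed extension lying in the image of $\bigoplus_j\tau_{\ell_j}$, so that passing to $\ker\partial/\im\tau_\infty$ automatically incorporates it; alternatively, one checks $\iota_*H_3(F_b) = 0$ in $H_3(T)$ directly using that $F_b$ bounds $f^{-1}(D_\infty)$ together with the thimble filling. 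Getting this compatibility exactly right — and making sure no spurious generator of $H_3(T)$ is lost — is where the argument needs genuine care rather than formal manipulation of exact sequences.
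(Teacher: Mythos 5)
Your overall skeleton (long exact sequence of the triple $(T,T^*,F_b)$ for the first claim, \propref{prop:gluing_thimbles} for the second) matches the paper's, but two of the steps you lean on would fail as written. First, you cannot apply \lemref{lem:local_decomp} ``over $\bbP^1$ with the critical values $\Sigma'$'': that lemma requires the base $V$ to be a disk, and over $\bbP^1$ the direct-sum decomposition is exactly what breaks --- the relations $\im\tau_\infty$ are the obstruction. Worse, since $F_\infty$ is a \emph{smooth} fibre, $f^{-1}(D_\infty)\simeq F_{b_\infty}\times D_\infty$ and hence $H_3(f^{-1}(D_\infty),F_{b_\infty})=0$; it cannot be identified with $\im\tau_\infty$, whose source is $H_2(F_b)\simeq H_4(T,T^*)$ via the Künneth isomorphism $(T,T^*)\simeq F_b\times(D,S^1)$. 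The real content of the first isomorphism is the surjectivity of $H_3(T^*,F_b)\to H_3(T,F_b)$, which by exactness of the sequence of the triple is equivalent to the injectivity of $\tau_\infty\colon H_1(F_b)\to H_2(T^*,F_b)$. Your parenthetical appeal to ``freeness in \lemref{lem:main_lemma}'' does not address this. The paper reduces this injectivity, via Künneth, to the injectivity of $\tau_\infty^\sindex{i}\colon H_1(E^\sindex{i}_b)\to H_2({S^\sindex{i}}^*,E^\sindex{i}_b)$, which holds because the elliptic surfaces are non-isotrivial and their monodromy group is $\SL_2(\Z)$. Without some such argument your first isomorphism is unproved.

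For the second isomorphism you correctly identify the obstacle --- $H_3(F_b)\ne 0$, so $\iota_*H_3(F_b)$ must be shown to die in $H_3(T)$ --- but you leave it unresolved, offering only candidate strategies. The paper settles it in one line: by Künneth, $H_3(F_b)\simeq H_2(E^\sindex{1}_b)\otimes H_1(E^\sindex{2}_b)\oplus H_1(E^\sindex{1}_b)\otimes H_2(E^\sindex{2}_b)$, and the pushforward $H_1(E^\sindex{i}_b)\to H_1(S^\sindex{i})$ vanishes because $1$-cycles of the fibre vanish at the singular fibres; hence $\iota_*\colon H_3(F_b)\to H_3(T)$ is the zero map and \propref{prop:gluing_thimbles} gives $H_3(T)\simeq\ker\bigl(\partial\colon H_3(T,F_b)\to H_2(F_b)\bigr)$ on the nose. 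Your observation that $\im\tau_\infty\subset\ker\partial$ because $\ell_r\cdots\ell_1$ is nullhomotopic in $\bbP^1\setminus\Sigma$ is correct and is what lets the two claims combine; but as it stands the proposal is missing the two substantive verifications (injectivity of $\tau_\infty$ on $H_1(F_b)$, vanishing of $\iota_*H_3(F_b)$) that carry the proof.
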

\begin{proof}
The long exact sequence of homology of the triple $(T, T^*, F_b)$ yields
\begin{equation}
H_2(F_b) \xrightarrow{\tau_\infty} H_3(T^*, F_b)\to H_{3}(T, F_b) \to  H_{1}(F_b)\xrightarrow{\tau_\infty} H_{2}(T^*, F_b)\,,
\end{equation}
where $H_k(T, T^*)$ has been identified with $H_{k-2}(F_b)$ by the Künneth formula, as $(T, T^*)\simeq F_b\times (D, S^1)$.

Let us show that $\tau_\infty\colon H_{1}(F_b)\to H_{2}(T^*, F_b)$ is injective.
By the Künneth formula, 
\begin{equation}
H_1(F_b) \simeq H_0(E^\sindex{1})\otimes H_1(E^\sindex{2}_b) \oplus H_1(E^\sindex{1}_b)\otimes H_0(E^\sindex{2}_b)\,.
\end{equation}
Let $u\otimes v\in H_0(E^\sindex{1}_b)\otimes H_1(E^\sindex{2}_b)$.
Then 
\begin{equation}
	\tau_\infty(u\otimes v) =  \tau^\sindex{1}_\infty(u)\otimes v + u\otimes\tau^\sindex{2}_\infty( v) = \tau^\sindex{1}_\infty(u)\otimes v\,.
\end{equation}
This shows that the injectivity of $\tau_\infty\colon H_{1}(F_b)\to H_{2}(T^*, F_b)$ boils down to the injectivity of
\begin{equation}
	\tau_\infty^\sindex{i}\colon H_{1}(E^\sindex{i}_b)\to H_{2}({S^\sindex{i}}^*, E^\sindex{i}_b)\,.
\end{equation}
which follows from the fact that the monodromy group of a non-isotrivial elliptic surface is $\operatorname{SL}_2(\Z)$.
This concludes the first point.\vspace{.5em}

To prove the second point, we rely on \propref{prop:gluing_thimbles}.
In our setting, the pushforward of the inclusion $\iota_*\colon H_3(F_b)\to H_3(T)$ is in fact the zero map.
Indeed, from the Künneth formula
\begin{equation}
	H_3(F_b)\simeq H_2(E^\sindex{1}_b)\otimes H_1(E^\sindex{2}_b) \quad\oplus\quad H_1(E^\sindex{1}_b)\otimes H_2(E^\sindex{2}_b)\,,
\end{equation}
and the pushforward of the inclusion $H_1(E_b^\sindex{1})\to H_1(S^\sindex{1})$ is the zero map as 1-cycles vanish at the singular fibres.
\end{proof}

\begin{example}
When $S^\sindex{1}$ and $S^\sindex{2}$ are rational surfaces, then $H_2(S^\sindex{i}, E^\sindex{i}_b)$ is free of rank $12$.
The image of the boundary map $\partial\colon H_3(T^*, F_b)\to H_2(F_b)$ defines a surjection onto the rank~$4$ module $H_1(E^\sindex{1})\otimes H_1(E^\sindex{2})$ and similarly the image of $\tau_\infty$ has rank $4$.
As a consequence we recover the known fact that $\rk H_3(T) = 4\rk H_2(S^\sindex{i}, E^\sindex{i}_b) -4 -4 = 40$ (see \parencite{Schoen_1988}).
\end{example}

Certain 3-cycles of $T$ are contained entirely in a fibre. 
By \propref{prop:thimbles_FP_generic}, these are cycles of the form $\Theta^\sindex{i} \otimes \gamma\sindex{i'}$ where $\Theta^\sindex{i}\in \operatorname{Sing}(S^\sindex{i})$ and $\gamma^\sindex{i'}\in H_1(E^\sindex{i'}_b)$.
\begin{definition}
The \emph{lattice of components of singular fibres} $\operatorname{Sing}(T)$ is the sublattice of $H_3(T)$ generated by such cycles.
\begin{equation}
    \operatorname{Sing}(T) = \bigoplus_{i\in\{1,2\}} \operatorname{Sing}(S^\sindex{i})\otimes H_1(E^\sindex{i'}_b)\,.
\end{equation}
\end{definition}

We now describe a sublattice of cycles for which period computations can be directly done, either by methods of \textcite{LairezEtAl_2024}, or because they are zero by construction. We call such cycles \emph{primary}.
\begin{definition}
The \emph{primary lattice} $\Prim(T/\bbP^1)$ is the sublattice of $H_3(T)$ generated by extensions and fibre components:
\begin{equation}
\Prim(T/\bbP^1) :=\Hpara_3(T/\bbP^1) \oplus \operatorname{Sing}(T/\bbP^1)\,.
\end{equation}
\end{definition}

\begin{proposition}\label{prop:fullrank_generic}
The primary lattice $\Prim(T/\bbP^1)$ has full rank in $H_3(T)$.
\end{proposition}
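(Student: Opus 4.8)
The plan is to compare $\Prim(T/\bbP^1)$ with $H_3(T)$ by showing that both surject onto — and have full-rank preimages of — the natural pieces of the decomposition $H_3(T)\simeq \ker\partial/\im\tau_\infty$ from the theorem above. Concretely, $\ker\partial \subset H_3(T^*,F_b)$ is spanned rationally by $\Hpara_3(T/\bbP^1)$ together with $\operatorname{Sing}(T/\bbP^1)$ by definition of the primary lattice, so it suffices to prove that $\ker\partial$ has full rank in $H_3(T^*,F_b)$ modulo $\im\tau_\infty$ — equivalently that the image of $\partial$ restricted to $\Hpara_3 + \operatorname{Sing}$ has the same rank as the image of $\partial$ on all of $H_3(T^*,F_b)$. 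But $\operatorname{Sing}(T)$ lies in $\ker\partial$ by construction, and $\Hpara_3(T/\bbP^1)$ is by definition contained in $\ker\partial\cap(\sum_j \im\tau_{\ell_j})$, so the real content is an equality of ranks, not a containment.

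First I would reduce to the elliptic-surface case via \propref{prop:thimbles_FP_generic}. That proposition identifies $H_3(T^*,F_b)\simeq \bigoplus_{i} H_2({S^\sindex{i}}^*,E^\sindex{i}_b)\otimes H_1(E^\sindex{i'}_b)$, and under this identification the boundary map $\partial$ on $T$ is $\bigoplus_i \partial^\sindex{i}\otimes\operatorname{id}$, while $\tau_\infty$ is $\bigoplus_i \tau_\infty^\sindex{i}\otimes\operatorname{id}$ on the $H_1\otimes H_1$ part (using the computation $\tau_\infty(u\otimes v)=\tau^\sindex{1}_\infty(u)\otimes v$ from the proof of the theorem). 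So the whole problem tensors: it is enough to show that $\Prim(S^\sindex{i}/\bbP^1)\otimes H_1(E^\sindex{i'}_b)$ has full rank in $\ker\partial^\sindex{i}\otimes H_1(E^\sindex{i'}_b)$, and then pass to the quotient by $\im\tau_\infty$. The first of these is exactly \propref{prop:fullrank_ES} (the primary lattice of an elliptic surface has full rank in $\ker\partial$), tensored with the free module $H_1(E^\sindex{i'}_b)$, which preserves full rank. The second step — descending the full-rank statement through the quotient by $\im\tau_\infty$ — is where one must be careful: I would argue that $\im\tau_\infty\subset \Hpara_3(T/\bbP^1)+(\text{something in }\ker\partial)$, or more precisely that the image of $\Prim(T/\bbP^1)$ in $\ker\partial/\im\tau_\infty \simeq H_3(T)$ still has full rank because $\Hpara_3$ was defined as a submodule of $H_3(T)/H_3(F_b)$ in the first place, i.e.\ already lives downstairs, and $\operatorname{Sing}(T)$ maps injectively (its cycles are supported in fibres disjoint from the $\infty$-fibre).

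Assembling: the image of $\Hpara_3(T/\bbP^1)$ in $H_3(T)$ equals the image of $\ker\partial\cap\sum_j\im\tau_{\ell_j}$; since $\sum_j\im\tau_{\ell_j}$ together with $\operatorname{Sing}$ spans all of $H_3(T^*,F_b)$ (each thimble is $\tau_{\ell_j}$ of a fibre cycle by the proposition following \defref{def:thimble}, and the only cycles not of this form are the fibre-component ones), and $\partial$ kills $\operatorname{Sing}$, we get that $\partial$ restricted to $\Hpara_3 + \operatorname{Sing}$ has image equal to $\im\partial$ on all of $H_3(T^*,F_b)$; hence $\ker\partial$ is spanned rationally by $\Prim(T/\bbP^1) + \im\tau_\infty$, and passing to the quotient gives that $\Prim(T/\bbP^1)$ surjects rationally onto $H_3(T)$, which is the claim. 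I expect the main obstacle to be bookkeeping the interaction between "living in $\ker\partial$" and "being in the span of the $\tau_{\ell_j}$" cleanly — in particular making sure that when we take $\ker\partial\cap\sum\im\tau_{\ell_j}$ we do not lose rank relative to $\ker\partial$ itself, which is precisely the full-rank input from \propref{prop:fullrank_ES} and must be transported correctly through the Künneth identification and the quotient by $\im\tau_\infty$.
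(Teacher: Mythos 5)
Your overall strategy --- reduce to the elliptic-surface statement via the K\"unneth decomposition of \propref{prop:thimbles_FP_generic}, invoke \propref{prop:fullrank_ES}, and then pass to the quotient by $\im\tau_\infty$ --- is exactly the route the paper takes (its proof is a one-line citation of those two results). However, the reduction as you state it in the middle of your argument does not close. The kernel of the total boundary map $\partial\colon H_3(T^*,F_b)\to H_2(F_b)$ is \emph{strictly larger} than $\bigoplus_i\ker\partial^{\sindex{i}}\otimes H_1(E^{\sindex{i'}}_b)$: an element $\sum_j x_j$ may have nonzero local boundaries that cancel across the two K\"unneth summands, since all local boundaries land in the same rank-$4$ module $H_1(E^{\sindex{1}}_b)\otimes H_1(E^{\sindex{2}}_b)$. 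Concretely, for two rational elliptic surfaces the direct sum of kernels has rank $2\cdot(12-2)\cdot 2=40$, while $\ker\partial$ has rank $48-4=44$; the missing rank $4$ is rationally accounted for by $\im\tau_\infty$. So ``prove full rank inside $\bigoplus_i\ker\partial^{\sindex{i}}\otimes H_1(E^{\sindex{i'}}_b)$ and pass to the quotient'' only yields rank $40-\rk\bigl(\bigoplus_i\ker\partial^{\sindex{i}}\otimes H_1(E^{\sindex{i'}}_b)\cap\im\tau_\infty\bigr)$ in the rank-$40$ quotient, and you would still have to show that this intersection is trivial (it is, because the local monodromies of a non-isotrivial elliptic surface have no common nonzero fixed vector, but that argument appears nowhere in your text).

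Your closing paragraph actually contains the patch that works: if $\sum_j\im\tau_{\ell_j}+\operatorname{Sing}(T)$ has full rank in all of $H_3(T^*,F_b)$, then since $\operatorname{Sing}(T)\subset\ker\partial$ one gets that $\bigl(\ker\partial\cap\sum_j\im\tau_{\ell_j}\bigr)+\operatorname{Sing}(T)$ has full rank in $\ker\partial$, and quotienting by $\im\tau_\infty\subset\ker\partial\cap\sum_j\im\tau_{\ell_j}$ gives the claim. Two things need repair there. First, the sentence ``$\partial$ restricted to $\Hpara_3+\operatorname{Sing}$ has image equal to $\im\partial$'' is vacuous as written, since $\partial$ kills both summands by definition; you mean $\partial$ restricted to $\sum_j\im\tau_{\ell_j}+\operatorname{Sing}$. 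Second, and more substantively, the input you need is not \propref{prop:fullrank_ES} as stated (full rank in $\ker\partial$ of the surface) but the stronger local statement that each $H_2\bigl((f^{\sindex{i}})^{-1}(D_j),E^{\sindex{i}}_{b_j}\bigr)$ is rationally spanned by $\im\tau^{\sindex{i}}_{\ell_j}$ together with the components of the singular fibre; your parenthetical justifies this only for fibres of Lefschetz type via the proposition following \defref{def:thimble}, whereas for general Kodaira fibres it is the content of Lemma 15 of the cited elliptic-surface paper, of which \propref{prop:fullrank_ES} is only a corollary.
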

\begin{proof}
This is a consequence of the corresponding result for elliptic surfaces \propref{prop:fullrank_ES} and \lemref{prop:thimbles_FP_generic}.
\end{proof}

We conclude by remarking that the variation of $H_2(F_t)$ is entirely determined by the variation of $H_1(E^\sindex{1}_t)$ and of $H_1(E^\sindex{2}_t)$.
By the Künneth formula, we have 
\begin{equation}
	H^2(F_t) \simeq H^1(E^\sindex{1}_{t})\otimes H^1(E^\sindex{2}_{t})\oplus H^0(E^\sindex{1}_{t})\otimes H^2(E^\sindex{2}_{t}) \oplus H^2(E^\sindex{1}_{t})\otimes H^0(E^\sindex{2}_{t})\,.
\end{equation}
Note that $H^0(E^\sindex{i}_{t})$ and $H^2(E^\sindex{i}_{t})$ have rank $1$ and trivial monodromy for $i=1,2$. 
Thus the only relevant part of the homology of the fibre is $H^1(E^\sindex{1}_{t})\otimes H^1(E^\sindex{2}_{t})$, which has rank $2\times 2 = 4$.

In fact, we see that for $\gamma^\sindex{1}\otimes \gamma^\sindex{2}\in H^1(E^\sindex{1}_{t})\otimes H^1(E^\sindex{2}_{t})$, we have
\begin{equation}
	\ell_*(\gamma^\sindex{1}\otimes \gamma^\sindex{2}) = \ell^\sindex{1}_*\gamma^\sindex{1}\otimes \ell^\sindex{2}_*\gamma^\sindex{2}\,,
\end{equation}
so the monodromy representations of $S^\sindex{1}$ and $S^\sindex{2}$ determine the monodromy representation associated to $H_2(F_b)$ of $T$ entirely.

Furthermore the intersection product on $H^1(E^\sindex{1}_{t})\otimes H^1(E^\sindex{2}_{t})$ is itself also simply given by the product of the intersection products on each component:
\begin{equation}
	\langle \gamma^\sindex{1}\otimes \gamma^\sindex{2}, \eta^\sindex{1}\otimes \eta^\sindex{2}\rangle = 
	\langle \gamma^\sindex{1}\eta^\sindex{1}\rangle \langle \gamma^\sindex{2} \eta^\sindex{2}\rangle\,.
\end{equation}
In particular the methods of \secref{sec:fibrations} can be used to compute the lattice structure on $H_3(T)$.

\subsection{The singular case}
We now turn to the case where some singular fibres of $S^\sindex{1}$ and $S^\sindex{2}$ lie over the same critical value.
We proceed by realising a \emph{smoothing} of $\tilde T$ (formally) obtained by shifting the critical values.
This is similar to the approach of \textcite[\S5]{Bryan_2019}.
The smoothing corresponds to the generic case mentioned above, and all the methods there can be applied here.
Furthermore, a careful study allows to pinpoint the lattice $\Lambda_{\rm vc}$ of \emph{vanishing cycles} of the degeneration, i.e., the cycles that become homologically trivial in the singular limit.
In the singular limit, the vanishing cycles collapse to the singular locus.

\begin{remark}
In the simplest case of nodal singularities admitting a small resolution, as studied in \textcite{Donlagic_2025}, the third homology group $H_3(\widehat T)$ of the resolution $\widehat T$ of $T$ can be identified with the quotient $\Lambda_{\rm vc}^\perp / \Lambda_{\rm vc}$, from geometric transition theory.

This should be related to the 1-dimensional case. For example, consider a smooth curve of genus $g\ge 1$ with a simple node obtained from pinching a non-intersecting loop on $X$. 
The vanishing cycle is the 1-cycle corresponding to the homology class of the loop. 
Any cycle intersecting the vanishing cycle gets pinched by the singularity, and, in the resolution, is cut in half.
Therefore the surviving cycles of the resolution are precisely those orthogonal to the vanishing cycle, modulo the vanishing cycle.
We will not make any similar claim for threefolds, but keeping this in mind, we will consider $\Lambda_{\rm vc}^\perp / \Lambda_{\rm vc}$ throughout.
\end{remark}

Let $\left(\psi^\sindex{\varepsilon}\right)$ be the Möbius transformation $\left(\psi^\sindex{\varepsilon}\right)\colon t\mapsto t-\epsilon$ on $\bbP^1$.
Assume without loss of generality that $\infty \notin \Sigma$ and define the \emph{smoothing} of $T$ by
\begin{equation}
T^\sindex{\varepsilon} :=S^\sindex{1}\times_{\bbP^1} \left(\psi^\sindex{\varepsilon}\right)^*S^\sindex{2}\,,
\end{equation}
naturally equipped with a map $f^\sindex{\varepsilon}\colon T^\sindex{\varepsilon}\to \bbP^1$.
Its critical values are $\Sigma^\sindex{\varepsilon} := \Sigma^\sindex{1}\cup\left\{t+\varepsilon\st t\in\Sigma^\sindex{2}\right\}$,
and we write $F^\sindex{\varepsilon}_t = (f^\sindex{\varepsilon})^{-1}(t) = F_{t-\varepsilon}$ to denote its fibre above $t\in \bbP^1$.

\begin{remark}
To ease notations, we will not distinguish $S^\sindex{2}$ and $\phi^\sindex{\varepsilon}_*S^\sindex{2}$. 
It should be clear from context which one is meant.
All that changes is that $D_j$ should be replaced by $D_j-\varepsilon$ and $E^\sindex{2}_b$ is (canonically) identified with $E^\sindex{2}_{b-\varepsilon}.$
\end{remark}

Then for a small enough disk $D$ around zero, $t_i+\varepsilon\in D_i$ for $\varepsilon\in D$.
In particular we see that for $\varepsilon\in D\setminus\{0\}$, the fibre product $T_\varepsilon$ is generic, hence smooth (hence the name of smoothing).

Consider the loops $\ell_1, \dots, \ell_r \in \pi_1(\bbP^1\setminus\Sigma)$ defined in \secref{sec:local_decomposition}. 
When $t_i\in \Sigma^\sindex{1}\cap \Sigma^\sindex{2}$, the critical values get split when $\varepsilon\ne0$, and thus $\ell_i$ fails to generate $\pi_1(D_i \setminus \{t_i, t_i+\varepsilon\}, b_i)$.
To obtain a basis we instead replace it by two simple loops $\ell^\sindex{1}_i, \ell^\sindex{2}_i$ around $t_i$ and $t_{i}+\varepsilon$ respectively, such that $\ell_i = \ell^\sindex{2}_i\ell^\sindex{1}_i$.
This is represented in the bottom part of \figref{fig:vanishing_cycles}.
Doing so for every pair of colliding fibres, we obtain a basis of simple loops of $\pi_1(\bbP^1 \setminus\Sigma^\sindex{\varepsilon}, b)$.
The new loops we have introduced get pinched between $t_i$ and $t_i+\varepsilon$ when $\varepsilon\to0$.

\begin{lemma}
	The parabolic homology $\Hpara_3(T/\bbP^1)$ of $T$ injects naturally into $\Hpara_3(T^{\varepsilon}/\bbP^1)$.
\end{lemma}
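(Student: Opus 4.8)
The plan is to construct the injection directly on the level of extension cycles. Recall $\Hpara_3(T/\bbP^1)$ is generated by closed extensions $\tau_{\ell}(\gamma)$ with $\ell\in\pi_1(\bbP^1\setminus\Sigma,b)$ and $\gamma\in H_2(F_b)$. The smoothing $T^{\varepsilon}$ has critical locus $\Sigma^{\varepsilon}=\Sigma^{\sindex1}\cup\{t+\varepsilon\mid t\in\Sigma^{\sindex2}\}$, and for $\varepsilon\in D\setminus\{0\}$ the loops $\ell_i$ of \secref{sec:local_decomposition} either remain simple (when $t_i$ was a critical value of only one factor) or factor as $\ell_i=\ell_i^{\sindex2}\ell_i^{\sindex1}$ (when $t_i\in\Sigma^{\sindex1}\cap\Sigma^{\sindex2}$). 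In either case each $\ell_i$ still defines a loop in $\pi_1(\bbP^1\setminus\Sigma^{\varepsilon},b)$, simply because $\Sigma\subset\Sigma^{\varepsilon}$ and $\ell_i$ avoids a neighbourhood of $\Sigma^{\varepsilon}$ for $\varepsilon$ small. This gives a homomorphism $\rho\colon\pi_1(\bbP^1\setminus\Sigma^{\varepsilon},b)\to\pi_1(\bbP^1\setminus\Sigma,b)$ induced by filling in the split-off punctures, which is split by the map sending each $\ell_i$ to itself; call the latter section $s$. Since $F^{\varepsilon}_b=F_b$ (after the canonical identification of the remark), the fibre, its homology, its intersection product and its monodromy representation $\ell\mapsto\ell_*$ are literally unchanged under $s$: the monodromy of $\ell_i^{\sindex1}$ and $\ell_i^{\sindex2}$ around the split values equals that of $\ell_i^{\sindex1}$ resp.\ $\mathrm{id}$ on the surviving factor, so $(s(\ell))_{*}=\ell_{*}$ for all $\ell$.

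First I would define the map on generators: send the closed extension $\tau_{\ell}(\gamma)\in\Hpara_3(T/\bbP^1)$ to $\tau_{s(\ell)}(\gamma)\in\Hpara_3(T^{\varepsilon}/\bbP^1)$. I must check this is well-defined. Because $(s(\ell))_{*}=\ell_{*}$, formula \eqref{eq:boundary_extension} gives $\partial\tau_{s(\ell)}(\gamma)=(s(\ell))_{*}\gamma-\gamma=\ell_{*}\gamma-\gamma=0$, so the image lands in $\ker\partial$; and by construction it lies in the span of the $\im\tau_{\ell_i}$ (with the convention that $\tau_{\ell_i}=\tau_{\ell_i^{\sindex2}}+\tau_{\ell_i^{\sindex2}\text{-transport of }\ell_i^{\sindex1}}$ in the split case, via the composition rule \eqref{eq:extensions}), hence in $\Hpara_3(T^{\varepsilon}/\bbP^1)$. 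Additivity in $\gamma$ and compatibility with the composition rule \eqref{eq:extensions} are immediate from the corresponding identities for $\tau$, so the assignment descends to a well-defined homomorphism $\Hpara_3(T/\bbP^1)\to\Hpara_3(T^{\varepsilon}/\bbP^1)$; by naturality of the quotient by $\im\tau_\infty$ (note $\rho$ sends the big loop $\ell_r\cdots\ell_1$ to itself, so $\tau_\infty$ is respected) this is independent of the presentation.

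Then I would prove injectivity by exhibiting a left inverse on the level of relative homology. The retraction $\rho$ realises $\bbP^1\setminus\Sigma^{\varepsilon}$ as $\bbP^1\setminus\Sigma$ with some discs removed; pulling back the local decomposition of \lemref{lem:local_decomp}, $H_3(T^{\varepsilon *},F_b)=\bigoplus_j H_3((f^{\varepsilon})^{-1}(D_j),F_{b_j})$ and for the split indices the summand decomposes further as the direct sum of the contributions of $t_i$ and of $t_i+\varepsilon$. Collapsing the $t_i+\varepsilon$-contributions back onto the $t_i$-contributions (i.e.\ deforming $\varepsilon\to 0$ inside the disc $D_i$, under which $(f^{\varepsilon})^{-1}(D_i)$ retracts onto $f^{-1}(D_i)$) defines a projection $H_3(T^{\varepsilon *},F_b)\twoheadrightarrow H_3(T^{*},F_b)$ which is a one-sided inverse to the inclusion-induced map; restricting to $\ker\partial$ and passing to the quotient by $\im\tau_\infty$ gives the desired left inverse of our map on $\Hpara_3$. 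I expect the main obstacle to be making this collapsing map precise as a map of lattices rather than just vector spaces — one has to argue that the thimbles at $t_i$ and $t_i+\varepsilon$ in the smoothing specialise to a $\Z$-basis of the relative homology at $t_i$ of the (morsified) singular fibre, which is exactly the content of \lemref{lem:main_lemma} applied to the morsification of $S^{\sindex1}\times_{\bbP^1}S^{\sindex2}$ over $D_i$, so the integrality is not automatic and needs this input. Once that is in hand, injectivity is formal.
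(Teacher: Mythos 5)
The construction of the map in your first two paragraphs is, in substance, the paper's entire proof: the authors only observe that each $\ell_j$ remains a loop in $\pi_1(\bbP^1\setminus\Sigma^{\varepsilon},b)$ for $\varepsilon$ sufficiently small, so that a closed extension of $T/\bbP^1$ persists as a closed extension of $T^{\varepsilon}/\bbP^1$ (the monodromy of $\ell_j$, hence the closedness criterion \eqref{eq:boundary_extension}, being unchanged). One slip in your justification: it is \emph{not} true that $\Sigma\subset\Sigma^{\varepsilon}$. The whole of $\Sigma^{\sindex{2}}$ is translated by $\varepsilon$, so a critical value of $S^{\sindex{2}}$ not shared with $S^{\sindex{1}}$ lies in $\Sigma$ but not in $\Sigma^{\varepsilon}$. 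The correct reason is that $\Sigma^{\varepsilon}\subset\bigcup_j D_j$ for $\varepsilon$ small, so the representative of $\ell_j$ given by $\partial D_j$ conjugated by $p_j$ avoids $\Sigma^{\varepsilon}$ and encircles exactly the points of $\Sigma^{\varepsilon}$ contained in $D_j$.

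Your injectivity argument goes beyond what the paper writes, but its key step is not correct as stated. The set $(f^{\varepsilon})^{-1}(D_i)$ is an open piece of the smooth threefold $T^{\varepsilon}$, while $f^{-1}(D_i)$ is an open piece of the singular $T$; these are fibres over two \emph{distinct} points of the disk $D$ parametrising the family of smoothings, and there is no deformation retraction of the former onto the latter obtained by ``deforming $\varepsilon\to 0$'' (nor is there an inclusion $T^{*}\hookrightarrow T^{\varepsilon*}$ for your collapsing map to be a one-sided inverse of). What does exist is a retraction of the \emph{total space} of the one-parameter family over $D_i\times D$ onto its central fibre $f^{-1}(D_i)$ (the Clemens retraction of a degeneration); restricting it to the fibre at $\varepsilon$ yields a specialisation map $H_3((f^{\varepsilon})^{-1}(D_i),F_{b_i})\to H_3(f^{-1}(D_i),F_{b_i})$ which kills the vanishing cycles and carries $\tau^{\varepsilon}_{\ell_i}(\gamma)$ to $\tau_{\ell_i}(\gamma)$, and that is the left inverse you need. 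Your appeal to \lemref{lem:main_lemma} is also misplaced: that lemma concerns elliptic surfaces, and $T^{\varepsilon}$ is not a morsification of $T$ in the paper's sense. Since the paper's own proof stops at the loop-persistence observation, the extra effort on injectivity is welcome, but the argument supplied for it does not work in its current form.
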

\begin{proof}
This follows from the fact that for all $j$, any representative of $\ell_j$ remains a loop in $\pi_1(\bbP^1\setminus \Sigma^{\varepsilon})$ for $\epsilon$ sufficiently small.
\end{proof}

Let us focus on a specific $t_j\in \Sigma^\sindex{1}\cap \Sigma^\sindex{2}$.
Then recall from \propref{prop:thimbles_FP_generic} that in the smoothing $T^\sindex{\varepsilon}$ we have generators of $H_3((f^\sindex{\varepsilon})^{-1}(D_j), F^\sindex{\varepsilon}_b)$ given by products $\Delta^\sindex{i}\times \gamma^\sindex{i'}$ where $\Delta^\sindex{i}$ is a thimble of $H_2((f^\sindex{i})^{-1}(D_j), E^\sindex{i}_b)$ and $\gamma^\sindex{i'} \in H_1(E_b^\sindex{i'})$.

In particular, picking for $i=1,2$ such thimbles $\Delta^\sindex{i}\in H_2((f^\sindex{i})^{-1}(D_j), E^\sindex{i}_b)$, we construct an element of $H_3(T^\sindex{\varepsilon})$ as follows.
\begin{definition}
	The \emph{vanishing cycle} corresponding to $\Delta^\sindex{1}$ and $\Delta^\sindex{2}$ is the 3-cycle $[\Delta^\sindex{1},\Delta^\sindex{2}] \in H_3(T^\sindex{\varepsilon})$ obtained from
\begin{equation}
	\Delta^\sindex{1}\otimes \partial\Delta^\sindex{2} - \partial\Delta^\sindex{1}\otimes \Delta^\sindex{2} \in \ker\partial\,.
\end{equation}
We denote by $\Lambda_{\rm vc}$ the sublattice of $H_3(T^\sindex{\varepsilon})$ generated by vanishing cycles.
An illustrative representation of such a cycle can be found in \figref{fig:vanishing_cycles}.
\end{definition}

\begin{figure}[tbp]
  \centering
  \begin{subfigure}[b]{0.7\linewidth}
    \begin{center}
  \def\svgwidth{\columnwidth}
  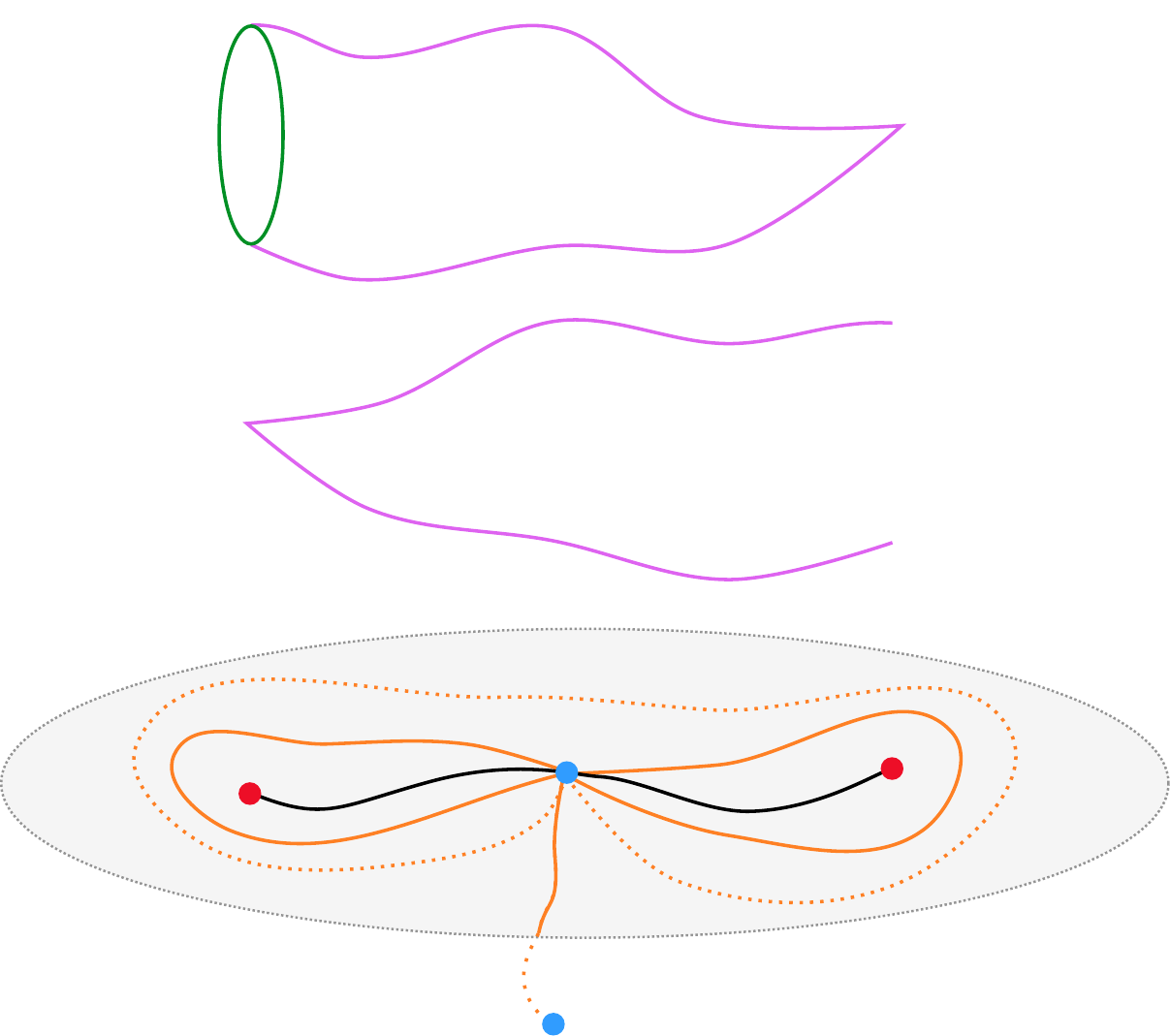

    \end{center}
  \end{subfigure}
    \caption{\textit{Bottom half:} when $\varepsilon\ne0$, $t_j$ splits into two critical values $t_j$ and $t_j+\varepsilon$ of $T^\varepsilon$. We define $\ell^\sindex{i}_j$ as in the drawing, so that $\ell^\sindex{2}_j\ell^\sindex{1}_j = \ell_j$. \textit{Full picture:} in the smoothing, we construct a vanishing cycle of by gluing two thimbles $\Delta^\sindex{1}$ and $\Delta^\sindex{2}$ of $S^\sindex{1}$ and $S^\sindex{2}$. The resulting cycle collapses when $\varepsilon\to 0$.}\label{fig:vanishing_cycles}
  \label{fig:thimble}
\end{figure}

We now define the lattice of \emph{primary cycles}. These are cycles of $H_3(T^\varepsilon)$ for which the periods of the limit in $T$ can be computed directly. 
\begin{definition}
The \emph{primary lattice} $Prim(T^\sindex{\varepsilon}/\bbP^1)$ of $T^\sindex{\varepsilon}/\bbP^1$ is the sublattice of $H_3(T^\varepsilon)$ generated by extensions of $T/\bbP^1$, vanishing cycles, and components of singular fibres of $T^{\varepsilon}/\bbP^1$ :
\begin{equation}
	\Prim(T^\sindex{\varepsilon}) := \Hpara_3(T) \oplus \Lambda_{\rm vc} \oplus \operatorname{Sing}(T^{\varepsilon})
\end{equation}
\end{definition}

Finally we give an equivalent of \propref{prop:fullrank_generic} in this setting.
\begin{proposition}\label{prop:fullrank}
The orthogonal complement of $\Lambda_{\rm vc}\otimes \Q$ is a sublattice of $\Prim(T^\sindex{\varepsilon}/\bbP^1) \otimes \Q$.
\end{proposition}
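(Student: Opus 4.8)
The plan is to exploit that the smoothing $T^{\varepsilon}$ is itself a generic fibre product, so that \propref{prop:fullrank_generic} and the intersection-theoretic tools of \secref{sec:local_decomposition} apply to it, and to reduce the statement to a local computation at the colliding fibres.

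\emph{Reduction to a local statement.} For $\varepsilon\neq 0$ the threefold $T^{\varepsilon}$ is generic, so \propref{prop:fullrank_generic} applies: $\Hpara_3(T^{\varepsilon})\oplus\operatorname{Sing}(T^{\varepsilon})$ has full rank in $H_3(T^{\varepsilon})$. Since $T^{\varepsilon}$ is a smooth projective threefold, the intersection form on $H_3(T^{\varepsilon})\otimes\Q$ is non-degenerate, so the asserted inclusion $(\Lambda_{\rm vc}\otimes\Q)^{\perp}\subseteq\Prim(T^{\varepsilon}/\bbP^1)\otimes\Q$ is equivalent to its dual: any class orthogonal to $\Hpara_3(T)$, to $\Lambda_{\rm vc}$ and to $\operatorname{Sing}(T^{\varepsilon})$ lies in $\Lambda_{\rm vc}\otimes\Q$. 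Now $\Hpara_3(T)$ and $\Hpara_3(T^{\varepsilon})$ are generated by closed extensions along loops in $\bbP^1\setminus\Sigma^{\varepsilon}$, hence have representatives supported over $\bbP^1\setminus\Sigma^{\varepsilon}$, whereas $\operatorname{Sing}(T^{\varepsilon})$ is supported over $\Sigma^{\varepsilon}$; these project to disjoint subsets of $\bbP^1$, so $\Hpara_3(T)\perp\operatorname{Sing}(T^{\varepsilon})$ and $\Hpara_3(T^{\varepsilon})\perp\operatorname{Sing}(T^{\varepsilon})$. The lattice $\operatorname{Sing}(T^{\varepsilon})$ is non-degenerate — it is built from the (negative definite) root lattices of the reducible fibres of $S^{\sindex{1}}$ and $S^{\sindex{2}}$, the fibre classes having died in $H_3(T^{\varepsilon})$ — so $\operatorname{Sing}(T^{\varepsilon})^{\perp}\otimes\Q=\Hpara_3(T^{\varepsilon})\otimes\Q$ and the form stays non-degenerate there. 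Hence the proposition reduces to the claim: inside $\Hpara_3(T^{\varepsilon})\otimes\Q$, a class orthogonal to both $\Hpara_3(T)$ and $\Lambda_{\rm vc}$ lies in $\bigl(\Hpara_3(T)+\Lambda_{\rm vc}\bigr)\otimes\Q$; equivalently, $(\Lambda_{\rm vc}\otimes\Q)^{\perp}$ meets $\Hpara_3(T^{\varepsilon})\otimes\Q$ in a subspace of $\bigl(\Hpara_3(T)+\Lambda_{\rm vc}\bigr)\otimes\Q$.

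\emph{The local computation.} By \lemref{lem:local_decomp} and \propref{prop:thimbles_FP_generic}, $H_3(T^{\varepsilon*},F_b)$ is the direct sum of the local contributions over disks around the points of $\Sigma^{\varepsilon}$. At a non-colliding critical value the corresponding loop is unchanged in $\pi_1(\bbP^1\setminus\Sigma^{\varepsilon})$, so that piece of $\Hpara_3(T)$ already accounts for the whole local contribution there. At a colliding $t_j$ the local contribution is $\bigl(H_2((f^{\sindex{1}})^{-1}(D_j),E^{\sindex{1}}_{b_j})\otimes H_1(E^{\sindex{2}}_{b_j})\bigr)\oplus\bigl(H_2((f^{\sindex{2}})^{-1}(D_j),E^{\sindex{2}}_{b_j})\otimes H_1(E^{\sindex{1}}_{b_j})\bigr)$, whose two summands are $\im\tau_{\ell_j^{\sindex{1}}}$ and $\im\tau_{\ell_j^{\sindex{2}}}$. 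Using $\partial(\Delta^{\sindex{i}}\otimes\gamma^{\sindex{i'}})=\partial\Delta^{\sindex{i}}\otimes\gamma^{\sindex{i'}}$, the composition rule \eqref{eq:extensions} for $\ell_j=\ell_j^{\sindex{2}}\ell_j^{\sindex{1}}$ and the Picard--Lefschetz formula, one writes down the image of the surviving extension $\tau_{\ell_j}$ inside this local piece, and then, using the product form $\langle a^{\sindex{1}}\otimes b^{\sindex{1}},a^{\sindex{2}}\otimes b^{\sindex{2}}\rangle=\pm\langle a^{\sindex{1}},a^{\sindex{2}}\rangle\langle b^{\sindex{1}},b^{\sindex{2}}\rangle$ for the intersection product, one checks that a local class orthogonal to all vanishing cycles $[\Delta^{\sindex{1}},\Delta^{\sindex{2}}]$ at $t_j$ lies in $\im\tau_{\ell_j}$ modulo the local part of $\operatorname{Sing}(T^{\varepsilon})$. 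The only inputs are the two boundary maps $H_2((f^{\sindex{i}})^{-1}(D_j),E^{\sindex{i}}_{b_j})\to H_1(E^{\sindex{i}}_{b_j})$, whose images are the spans $V_j^{\sindex{i}}$ of the vanishing cycles of $S^{\sindex{i}}$ at $t_j$, together with the symplectic forms on $H_1(E^{\sindex{1}}_{b_j})$ and $H_1(E^{\sindex{2}}_{b_j})$ — which also control the $[\Delta^{\sindex{1}},\Delta^{\sindex{2}}]$. Summing the local statements over the colliding critical values yields the reduced claim.

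\emph{Main obstacle.} The crux is this local identification at a colliding fibre of arbitrary Kodaira type. It requires tracking the pseudo-intersection product on thimbles of \textcite[\S5.2]{LairezEtAl_2024} with some care, and in particular controlling the pairing between $\operatorname{Sing}(T^{\varepsilon})$-classes and vanishing cycles, which is genuinely non-zero as soon as a colliding fibre is reducible; one must verify that this does not spoil the rank count underlying the identification of $(\im\tau_{\ell_j})^{\perp}$ with the span of the vanishing cycles modulo $\operatorname{Sing}(T^{\varepsilon})$.
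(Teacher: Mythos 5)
Your overall architecture matches the paper's: establish full rank of $\Hpara_3(T^{\sindex{\varepsilon}})\oplus\operatorname{Sing}(T^{\sindex{\varepsilon}})$ via the generic case, localise at the colliding critical values using \lemref{lem:local_decomp} and the splitting $\ell_j=\ell^{\sindex{2}}_j\ell^{\sindex{1}}_j$, and exploit the product structure of the intersection form. But the decisive step is precisely the one you defer with ``one writes down\dots one checks\dots'' and then flag as the main obstacle; without it there is no proof, since everything before that point is bookkeeping. The paper's version of this step is short and needs none of the machinery you anticipate. One evaluates directly
\begin{equation*}
\bigl\langle \tau_{\ell^{\sindex{1}}_j}(\gamma^{\sindex{1}}\otimes\gamma^{\sindex{2}}),\,[\Delta^{\sindex{1}},\Delta^{\sindex{2}}]\bigr\rangle \;=\; \langle\gamma^{\sindex{1}},\partial\Delta^{\sindex{1}}\rangle\,\langle\gamma^{\sindex{2}},\partial\Delta^{\sindex{2}}\rangle\,.
\end{equation*}
Requiring this to vanish for \emph{all} pairs of thimbles $(\Delta^{\sindex{1}},\Delta^{\sindex{2}})$ at a colliding $t_j$ is a \emph{product} of two linear conditions, not a sum: if some $\Delta^{\sindex{i}}$ gives $\langle\gamma^{\sindex{i}},\partial\Delta^{\sindex{i}}\rangle\ne0$, then $\langle\gamma^{\sindex{i'}},\partial\Delta^{\sindex{i'}}\rangle=0$ for every $\Delta^{\sindex{i'}}$; so for some $i$ the class $\gamma^{\sindex{i}}$ is orthogonal to every vanishing cycle of $S^{\sindex{i}}$ there, whence $\tau_{\ell^{\sindex{i}}_j}(\gamma^{\sindex{i}})=0$. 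For $i=1$ the extension vanishes outright, and for $i=2$ it equals $\tau_{\ell_j}(\gamma)\in\Hpara_3(T)$ by the composition rule \eqref{eq:extensions}. No rank count, no Picard--Lefschetz bookkeeping on the pseudo-lattice of thimbles, and no control of the pairing between $\operatorname{Sing}(T^{\sindex{\varepsilon}})$ and $\Lambda_{\rm vc}$ enters, because the argument is run extension by extension rather than on the orthogonal complement of $\operatorname{Sing}(T^{\sindex{\varepsilon}})$. Your worry about reducible colliding fibres is thus a red herring for this route.

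There is also a slip in your reduction. You first dualise the statement to ``any class orthogonal to all of $\Prim(T^{\sindex{\varepsilon}}/\bbP^1)$ lies in $\Lambda_{\rm vc}\otimes\Q$,'' but your reduced local claim only produces membership in $\bigl(\Hpara_3(T)+\Lambda_{\rm vc}\bigr)\otimes\Q$; these are not the same, and you give no argument killing the $\Hpara_3(T)$-component. The mismatch is harmless only because the un-dualised statement is what one should prove directly, as the paper does: a class of $\Lambda_{\rm vc}^{\perp}$ decomposes, by full rank, into a combination of extensions plus a piece of $\operatorname{Sing}(T^{\sindex{\varepsilon}})$; the latter lies in $\Prim(T^{\sindex{\varepsilon}}/\bbP^1)$ for free, and the computation above is what places the extension part in $\Hpara_3(T)\oplus\Lambda_{\rm vc}$. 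I would recommend dropping the dualisation entirely and carrying out the displayed computation.
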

\begin{proof}
We first claim that $\Hpara_3(T^{\varepsilon})\oplus \operatorname{Sing}(T^{\varepsilon})$ has full rank --- this follows from \lemref{prop:thimbles_FP_generic} and Lemma 15 of \textcite{Pichon-Pharabod_2025}.
All we have to do is show that any extension of $\Hpara_3(T^\sindex{\varepsilon})\setminus \Hpara_3(T)\oplus\Lambda_{\rm vc}$ intersects $\Lambda_{\rm vc}$.
Consider such an extension $e = \tau_\ell(\gamma_1\otimes\gamma_2)$ and assume it does not.
Since $e\notin \Hpara_3(T)\oplus\Lambda_{\rm vc}$, it has a representative given as a decomposition of thimbles involving $\tau_{\ell^\sindex{i}_j}$ for some $t_j\in \Sigma^\sindex{1}\cap \Sigma^\sindex{2}$ and some $i=1,2$.
From \eqref{eq:extensions} and the fact that $\ell^\sindex{1}_j = (\ell^\sindex{2}_j)^{-1}\ell_j$, we can express any extension along $\ell^\sindex{2}_j$ as a sum of an extension along $\ell^\sindex{1}_j$ and along $\ell_j$.
We can thus assume that the only contribution to the intersection product comes from $\tau_{\ell^\sindex{1}_j}(\gamma)$ for some $\gamma = \gamma^\sindex{1}\otimes \gamma^\sindex{2} \in H_1(E_b^{\sindex{1}})\otimes H_1(E_b^{\sindex{2}})$.
The intersection of $e$ with a vanishing cycle $[\Delta^\sindex{1}, \Delta^\sindex{2}]$ at $t_j$ is given by
$\langle\gamma^\sindex{1},\partial\Delta^\sindex{1} \rangle \langle\gamma^\sindex{2}, \partial\Delta^\sindex{2}\rangle$.
This has to equal zero no matter which thimble $\Delta^\sindex{i}$ is, as long as it is a thimble of $S^\sindex{i}$ at $t_j$.
Assuming $\langle\gamma^\sindex{i},\partial\Delta^\sindex{i} \rangle\ne 0$ for some $\Delta^\sindex{i}$, we obtain that $\langle\gamma^\sindex{i'},\partial\Delta^\sindex{i'} \rangle=0$ for all $\Delta^\sindex{i'}$.
Therefore there is $i\in\{1,2\}$ such that $\langle\gamma^\sindex{i},\partial\Delta^\sindex{i} \rangle=0$ for all $\Delta^\sindex{i}$.
This implies that $\tau_{\ell^\sindex{i}_j}(\gamma^\sindex{i})=0$.
If $i=1$, then $\tau_{\ell^\sindex{1}_j}(\gamma)=0$, and if $i=2$, $\tau_{\ell^\sindex{1}_j}(\gamma) = \tau_{\ell_j}(\gamma)$.
Doing this for each pair of colliding fibres shows that $e\in \Hpara_3(T)\oplus\Lambda_{\rm vc}$, a contradiction.
\end{proof}

\section{Evaluation of periods}\label{sec:period_evaluation}

In this section we leverage the description of homology that we established in \secref{sec:fibre_products} to compute periods of fibered products of elliptic surfaces, using methods introduced in \textcite{LairezEtAl_2024}. 
The overall strategy is very similar to the one presented in \textcite{Pichon-Pharabod_2025}.
We will consider algebraic forms of the form
\begin{equation}
\omega = \omega_t \wedge \ud t \in H^3(T)\,,
\end{equation}
for some $\omega_t\in H^2(T)$.
In some sense, these are algebraic forms which do not contribute any periods to fibre components, as the wedge with $\ud t$ makes the restriction of $\omega$ to any fibre vanish.
In other words, periods of such forms on $\operatorname{Sing}(T^\varepsilon)$ and $\Lambda_{\rm vc}$ vanish.
When the threefold is Calabi--Yau, we explain how to express the holomorphic form in such a manner in \secref{sec:holomorphic_form}.

The main idea of our approach is the observation that periods of extensions can be evaluated by numerical integration methods.
Indeed for $\ell\in \pi_1(\bbP^1\setminus\Sigma, b)$ and $\eta\in H_2(F_b)$, we have that
\begin{equation}
\int_{\tau_\ell(\eta)}\omega = \int_{\ell}\left(\int_{\eta_t}\omega_t|_{F_t}\right) \ud t\,,
\end{equation}
where $\eta_t\in H^2(F_t)$ is the parallel transport of $\eta$ along $\ell$.
This expresses the period as a path integral of a period of the fibre.
Such a line can be efficiently evaluated using the Picard--Fuchs equation of the period of $F_t$, and assuming one is able to evaluate initial conditions at some point \parencite{VanDerHoeven_1999, Mezzarobba_2010}.
In practice we use the implementation of \textcite{Mezzarobba_2016} in SageMath \parencite{sagemath} in the \texttt{ore\_algebra} package \parencite{KauersEtAl_2015}, which allows to recover certified precision bounds on the values of the integrals.
For further details on the computation of periods of extensions, see \textcite[\S3.7]{LairezEtAl_2024}.
All in all, this gives a way to compute the periods of $\Hpara_3(T/\bbP^1)$.

It follows from \propref{prop:fullrank} that the knowledge of the periods on $\Hpara_3(T/\bbP^1)$, $\operatorname{Sing}(T_\varepsilon/\bbP^1)$ and $\Lambda_{\rm vc}$ are sufficient to recover the periods on the entirety of $\Lambda_{\rm vc}^\perp$. 
As all the cycles are expressed in a same generating set of thimbles, one can effectively relate a basis of $H_3(T)$ to the above, which renders this approach feasible.
For more details, we refer to \textcite[\S3.3]{Pichon-Pharabod_2025}

\subsection{Picard--Fuchs equation and periods of a product of elliptic curves}
We now turn to the crux of this computation in our setting, that is the evaluation of the periods of the fibre and the computation of the Picard--Fuchs equation.

First recall that from the Künneth formula
\begin{equation}
H_2(F_b) \simeq H_2(S^\sindex{1})\otimes H_0(S^\sindex{2})\quad \oplus\quad H_0(S^\sindex{1})\otimes H_2(S^\sindex{2})\quad \oplus\quad H_1(S^\sindex{1})\otimes H_1(S^\sindex{2})\,,
\end{equation}
and that the monodromy acts nontrivially only on the last term.
The dual decomposition holds in cohomology, and so we may restrict to rational forms of the form
\begin{equation}
\omega_t = \omega_t^\sindex{1}\otimes\omega_t^\sindex{2} \in H^1(E_t^\sindex{1})\otimes H^1(E_t^\sindex{2})\subset H^2(F_t)\,.
\end{equation}
Similarly let $\eta_t = \gamma_t^\sindex{1}\otimes \gamma_t^\sindex{2} \in H_1(E_t^\sindex{1})\otimes H_1(E_t^\sindex{2})\subset H_2(F_t)$.

Then the period $\int_{\eta_t}\omega_t$ is simply equal to the product of the respective periods of $E_t^\sindex{1}$ and $E_t^\sindex{1}$:
\begin{equation}\label{eq:product_forms}
	\int_{\eta_t}\omega_t = \int_{\gamma_t^\sindex{1}\otimes\gamma_t^\sindex{2}}\omega_t^\sindex{1}\otimes\omega_t^\sindex{2} = \int_{\gamma_t^\sindex{1}}\omega_t^\sindex{1}\int_{\gamma_t^\sindex{2}}\omega_t^\sindex{2}\,.
\end{equation}
In particular it becomes apparent that the Picard--Fuchs operator $\Lop$ of $\omega_t$ is the \emph{symmetric product}~$\Lop^\sindex{1}\,\circledS\,\Lop^\sindex{2}$ of the Picard--Fuchs operators $\Lop^\sindex{i}$ of $\omega^\sindex{i}_t$ for $i=1,2$.

Let us further describe how to control the cohomology of $E^\sindex{i}_t$, which we assume to be given as a cubic hypersurface in $\bbP^2$ over the field $\C$ when $t$ is a fixed smooth point, and over $\C(t)$ when $t$ is kept as a parameter.
We have an isomorphism called the \emph{residue map} \parencite[\S2]{Griffiths_1969}
\begin{equation}
	\res\colon H^2(\bbP^2\setminus E^\sindex{i}_t) \xrightarrow{\sim} H^1(E^\sindex{i}_t)\,,
\end{equation}
which send an element of the form $\omega\wedge \ud f^\sindex{i}/f^\sindex{i}$ to $\omega$, where $f^\sindex{i}$ is the defining homogeneous cubic equation of $E^\sindex{i}_t$.
From a result of \textcite{Grothendieck_1966}, elements of the domain of this map can conveniently be described in terms of rational functions of the form
\begin{equation}
\frac{a}{(f^\sindex{i})^k}\Omega_2\,,
\end{equation}
where $k\ge 2$, $\Omega_2$ is the volume form of $\bbP^2$ and $a$ is a homogeneous polynomial with degree $3(k-1)$, modulo derivatives.
The Hodge filtration of $H^1(E^\sindex{i}_t)$ coincides with the filtration by the pole order on the rational functions.
Furthermore, Griffiths--Dwork reduction allows to reduce any such rational function to a unique canonical representative with minimal pole order.
For elliptic curves, we find a basis given by
\begin{equation}\label{eq:cohomology_basis_EC}
\omega^\sindex{i}_1 = \res\left(\frac{1}{f^\sindex{i}}\Omega_2\right) \qquad\quad \omega^\sindex{i}_2 = \res\left(\frac{x^3}{(f^\sindex{i})^2}\Omega_2\right)\,,
\end{equation}
with $\omega^\sindex{i}_1$ being the holomorphic form.
Working over $\C(t)$, the action of the \emph{Gauss-Manin connection} $\nabla_t$ on the cohomology is then simply given by formal derivation with respect to $t$ on the rational functions:
\begin{equation}
\nabla_t \res\left( \frac{a}{(f^\sindex{i})^k}\Omega_2 \right)= \res \left(\partial_t \frac{a}{(f^\sindex{i})^k}\Omega_2\right)\,.
\end{equation}
Using Griffiths--Dwork, one may then compute the Picard--Fuchs equations of such algebraic forms \parencite{Lairez_2016}.

Furthermore, the action of Griffiths-Dwork reduction on $\omega_t = \omega^\sindex{1}_t\otimes\omega^\sindex{2}_t$ is simply given by the product formula, as can be seen from \eqref{eq:product_forms}:
\begin{equation}
\nabla_t\,\omega_t = \nabla_t\,\omega^\sindex{1}_t\otimes\omega^\sindex{2}_t + \omega^\sindex{1}_t\otimes\nabla_t\,\omega^\sindex{2}_t\,,
\end{equation}
and thus the values of derivatives of $\int_{\eta_t}\omega_t$ can be recovered using the general Leibniz rule from the periods of $E^\sindex{i}_t$.
In practice we use the methods of \textcite{LairezEtAl_2024}, and more specifically the implementation in \texttt{lefschetz-family}\footnote{\url{https://github.com/ericpipha/lefschetz-family}}, to compute these periods numerically with high certified precision.

For more details on the residue map and Griffiths--Dwork reduction, we refer to \textcites[\S2]{Griffiths_1969}[\S5.3]{CoxKatz_1999}{Lairez_2016}.

\subsection{Holomorphic forms}\label{sec:holomorphic_form}

We now focus on the case where $S^\sindex{1}$ and $S^\sindex{2}$ are rational.
Let $f^\sindex{i}$ be the defining equation of $E_t^\sindex{i}$ as a homogeneous cubic polynomials in $\C[t][x,y,z]$.

A section of the holomorphic bundle $H^{1,0}(E^\sindex{i})$ over $\bbP^1$ is in particular given by $\omega^\sindex{i}_1 = \res\left(1/f^\sindex{i}\Omega_2\right)$, as mentioned in \eqref{eq:cohomology_basis_EC}.
One would be tempted to say that the holomorphic form of $T$ is then given by $\omega^\sindex{1}\otimes \omega^\sindex{2}\wedge \ud t$, but this is in general not true. 
A similar fact is true in the case of elliptic surfaces, see \textcite[\S4.1]{Pichon-Pharabod_2025}.
It is instead given by (a scalar multiple) of 
\begin{equation}
\omega = f(t) \,\omega_t^\sindex{1}\otimes \omega_t^\sindex{2} \wedge \ud t\,,
\end{equation}
where $f(t) \in \C(t)$ ensures that the resulting rational form has no residues.

In practice, to identify which $f(t)$ is allowed, we rely on ideas of \textcite{Stiller_1987} adapted to our setting.
In short, we start by computing the Picard--Fuchs equation $\Lop$ of $\omega_t = \omega_t^\sindex{1}\otimes \omega^\sindex{2}_t$.
The only points where $\omega$ could have a residue are above the singularities of $\Lop$.
Whether it has residues can be checked formally by computing formal log-power series expansions of the solutions of $\Lop$ at $t_0$, for $t_0$ such a singularity.
More specifically we want to check whether the coefficient in $(t-t_0)^{-1}$ of all solutions is zero which can be done formally using the Frobenius method.
When it is not, we can correct it by multiplying by a suitable power of $(t-t_0)$ to ensure that the residue does vanish.
Doing this consistently throughout all singular values of $\Lop$ yields the $f(t)\in\C(t)$ we yearned for.

\section{An application to the Gamma conjecture}\label{sec:gamma_class}

We now turn to Calabi--Yau fibre products carrying a motive of type $(1,1,1,1)$.
Such Calabi--Yau threefolds come in one-parameter families \parencite{VanStraten_2018}, and the associated Picard--Fuchs equation is a Calabi--Yau operator.
Our goal in this section is to identify a general shape for the Gamma-class formula (see below for the definition).
We first focus on \emph{Hadamard products}, a certain type of fibre products of elliptic surfaces, many of which incarnate motives of type $(1,1,1,1)$.

\subsection{Hadamard products}

Given two rational elliptic surfaces $S^\sindex{1}$ and $S^\sindex{2}$, we contruct a one-parameter family as follows:
Let $\varphi_u \colon t\mapsto u/t$.
Consider the fibre product $T_u = S^\sindex{1}\times_{\bbP^1}\varphi_u^* \,S^\sindex{2}$, where $\varphi_u^* \,S^\sindex{2}$ means only that the parameter of $S^\sindex{2}$ has been transformed by $\varphi$, so that the fibre above $t$ is $E^\sindex{2}_{\varphi(t)}$.
\begin{definition}
The \emph{Hadamard product} of $S^\sindex{1}$ and $S^\sindex{2}$ is the family of Calabi--Yau threefolds $S^\sindex{1}\times_{\bbP^1}\varphi_u^* \,S^\sindex{2}$. We denote it by $S^\sindex{1}\times_{u}S^\sindex{2}$.
\end{definition}
We note here that the fibres may be smoothed uniformly in $u$ by considering the family
\begin{equation}
T_u^\sindex{\varepsilon} :=S^\sindex{1}\times_{\bbP^1} {\left(\psi^\sindex{\varepsilon}\right)}^*\,\varphi_u^*S^\sindex{2}\,.
\end{equation}

\begin{remark}
The variable $u$ will always denote the parameter of the family of threefolds, to distinguish it from $t$, which corresponds to the parameter of the elliptic surfaces and their fibre product.
\end{remark}

We consider the following rational elliptic surfaces with section, given by the equations
\begin{equation}\label{eq:elliptic_surfaces}
\begin{aligned}
A&: y^{2} - y x  -t y = x^{3}+tx^2\,,\\
B&: y^{2} = x^{3} - \left(12 t - 1\right) x^{2} + 48 t^{2} x - 64 t^{3}\,,\\
C&: y^{2} = x^{3} + \left(144 t - 3\right) x -  144 t + 2\,,\\
D&: y^{2} = x^{3}-  3 x + 1728 t - 2\,,\\
a&: y^{2} - \left(2 t - 1\right) y x + 3 t^{2} x^{2} + 2 t^{3} y + \left(-3 t^{4}\right) x + t^{6}= x^{3}\,,\\
b&: y^{2} - \left(t +1 1\right) y x  -t y= x^{3} + t x^{2}\,,\\
c&: y^{2} - \left(3 t - 1\right) y x + 3 t^{2} x^{2} + 2 t^{3} y = x^{3}+ 3 t^{4} x - t^{6}\,,\\
d&: y^{2} - \left(4 t - 1\right) y x + 2 t^{2} x^{2} = x^{3} - 4 t^{4} x - (8t^2 - 8 t + 1)t^4\,,\\
e&: \left(16 t - 1\right) y^{2} - \left(16 t - 1\right) y x -t y = \left(16 t - 1\right) x^{3}  +t x^{2}\,,\\
f&:  y^{2} - \left(3 t - 1\right) y x + 9 t^{3} y = x^{3} - t^3 (6 t-1) \left(9 t^2-3 t+1\right)\,,\\
g&:  y^{2} - \left(6 t - 1\right) y x  -2 t^{3} y= x^{3} - 3 t^{2} x^{2} +3 t^{4} x - t^6\,,\\
h&:   y^{2} = 9 x^{3} - 3 (-1 + 3 t) (-1 + 27 t)^3 - 6 (27 t-1)^4 \left(27 t^2+18 t-1\right)\,,\\
i&: \left(64 t - 1\right) y^{2} = \left(64 t - 1\right) x^{3} - \left(48 t - 3\right) x -16 t - 2\,,\\
j&: \left(432 t - 1\right) y^{2}  = \left(432 t - 1\right) x^{3} - \left(1296 t - 3\right) x +864 t + 2\,.
 \end{aligned}
\end{equation}
These are \emph{extremal} rational elliptic surfaces with three or four singular fibres, in the sense of \textcite{Herfurtner_1991} (see \S4 therein), see also \textcite{MirandaPersson_1986}.
Their fibre configuration is given in \tabref{tab:fibre_configuations} --- we impose that there is a semi-stable fibre at $0$ (i.e., type $\mathit{I}_n$), so that the Hadamard product has a maximal unipotent monodromy point at 0. 
These fibre products are studied in \textcite{AlmkvistVanStraten_2023}, from where we copied the notation.

\begin{remark}
There is no particular reason to consider these specific rational surfaces, other than the expectation that with a low number of singular fibres, the rank of $\Hpara_3(T)$ will be small, which increases the chance of the transcendental part consisting only of a $(1,1,1,1)$ motive.
Of course there are many more elliptic surfaces which one could consider.
\end{remark}

\begin{table}[]
\centering
\begin{tabular}{ccccc}
\toprule
\multirow{2}{*}{Surface} & \multicolumn{3}{c}{Fibre types} &\multirow{2}{*}{\thead{Holomorphic\\ solution at 0}}\\ 
 &   at 0&at $\infty$ & others & \\ 
 \midrule
A& $\mathit{I}_4$ & $\mathit{I}_1^*$ & $\mathit{I}_1$ &$1, 4, 36, 400,\dots$\\
B& $\mathit{I}_3$ & $\mathit{IV}^*$ & $\mathit{I}_1$ &$1, 6, 90, 1680, \dots$\\
C& $\mathit{I}_2$ & $\mathit{III}^*$ & $\mathit{I}_1$ &$1, 12, 420, 18480, \dots$\\
D& $\mathit{I}_1$ & $\mathit{II}^*$ & $\mathit{I}_1$ &$1, 60, 13860, 4084080, \dots$\\
a& $\mathit{I}_6$ & $\mathit{I}_3$ & $\mathit{I}_2 + \mathit{I}_1$ &$1, 2, 10, 56, \dots$\\
b& $\mathit{I}_5$ & $\mathit{I}_5$ & $2\mathit{I}_1$ &$1, 3, 19, 147, \dots$\\
c& $\mathit{I}_6$ & $\mathit{I}_2$ & $\mathit{I}_3 + \mathit{I}_1$ &$1, 3, 15, 93, \dots$\\
d& $\mathit{I}_4$ & $\mathit{I}_2$ & $\mathit{I}_4 + \mathit{I}_2$ &$1, 4, 20, 112, \dots$\\
e& $\mathit{I}_4$ & $\mathit{I}_1$ & $\mathit{I}_1^*$&$1, 12, 164, 2352, \dots$\\
f& $\mathit{I}_3$ & $\mathit{I}_3$ & $2\mathit{I}_3$ &$1, 3, 9, 21, \dots$\\
g& $\mathit{I}_6$ & $\mathit{I}_1$ & $\mathit{I}_3 + \mathit{I}_2$ &$1, 6, 42, 312, \dots$\\
h& $\mathit{I}_3$ & $\mathit{I}_1$ & $\mathit{IV}^*$ &$1, 21, 495, 12171, \dots$\\
i& $\mathit{I}_2$ & $\mathit{I}_1$ & $\mathit{III}^*$ &$1, 52, 2980, 176848, \dots$\\
j& $\mathit{I}_1$ & $\mathit{I}_1$ & $\mathit{II}^*$ &$1, 372, 148644, 60907728, \dots$\\
  \bottomrule
\end{tabular}

\caption{The fibre configurations of the rational surfaces, and the Taylor expansion of the holomorphic period around $t=0$.}
\label{tab:fibre_configuations}
\end{table}

When $S^\sindex{1}=A$ and $S^\sindex{2}=b$, then it was shown in \textcite{GolyshevVanStraten_2023} that the fibres of the family $T_u$ admit a simultaneous crepant resolution, yielding a flat family of smooth Calabi--Yau threefolds with middle Hodge number $(1,1,1,1)$.
While it is not generally known whether such simultaneous resolutions can be performed, one may always simultaneously smooth the fibres, by applying the approach mentioned in the previous section to the whole family.
We therefore consider any pair $S^\sindex{1}, S^\sindex{2}\in \left\{A,B,C,D,a,b,c,d,e,f,g,h,i,j\right\}$.
In particular with this approach the motivic part sit in the homology lattice of the smoothing, which has the much bigger rank $40$.
We will see that it often does so in a non-unimodular manner.

The Picard--Fuchs equation $\Lop^{\rm Had}$ (in $u$) of this family is the \emph{Hadamard product} of $\Lop^\sindex{1}$ and $\Lop^\sindex{2}$, that is, the minimal differential operator of the Hadamard product 
\begin{equation}
	1+a^\sindex{1}_1a^\sindex{2}_1 u + a^\sindex{1}_2a^\sindex{2}_2 u^2 + \cdots
\end{equation} 
of the holomorphic solutions $1 + a_1^\sindex{i}t+ a_2^\sindex{i}t^2 + \cdots$ of $\Lop^\sindex{i}$ for $i=1,2$.
This is remarkable as it gives a motivic meaning to this Hadamard product, which \emph{a priori} is not a geometric operation.
For all these products, the Hadamard product $\Lop^{\rm Had}$ has order $4$, and has a maximal unipotent monodromy point at $0$.
In fact, it turns out that $\Lop^{\rm Had}$ is a Calabi--Yau operator --- we refer to \textcite{VanStraten_2018} and the references therein for the definition of Calabi--Yau operators, and to \textcite{AlmkvistVanStraten_2023} for a discussion about these specific operators, among others.

\begin{example}
We will rely on the family $T_u = A\times_u c$ as a running example.
In this case the fibres of $T_u$ are of type
\begin{equation}
\mathit{I}_4\times\mathit{I}_2\,,\qquad 
\mathit{I}_1^*\times\mathit{I}_6\,,\qquad
\mathit{I}_0\times\mathit{I}_3\,,\qquad
\mathit{I}_0\times\mathit{I}_1\quad
\text{ and }\qquad
\mathit{I}_1\times\mathit{I}_0
\end{equation}
for generic values of $u$.
$T_u$ acquires singularities precisely when $u=0$, $u=\infty$ or the running $I_1$ fibre of $A$ collides with the singular fibres of $c$.
We may compute that $\Lambda_{\rm vc}^\perp/\Lambda_{\rm vc}$ is unimodular of rank $8$, while $\Hpara_3(T_u)$ has rank 4 (as expected since the corresponding motive has rank 4).
We may compute the periods of the holomorphic differential $\omega_1$ of $T_{u_0}$ for a generic value $u_0\in \bbP^1$, as well as those of its first three derivatives $\nabla_u^k\, \omega$, $k=1,2,3$ using the methods presented in this paper.
We recover the $4\times 8$ period matrix.
\end{example}

\begin{definition}
We define the \emph{transcendental lattice} to be the saturation of $\Hpara_3(T_u)$ in $\Lambda_{\rm vs}^\perp$. We denote it $\Tr(T_u)$.
\end{definition}
It is a rank $4$ sublattice, and for generic values of $u$, it is the orthogonal complement of components of singular fibres, that is, cycles on which the holomorphic periods vanish\footnote{We do not exclude the possibility that for specific values of $u$, some periods vanish. A good analogy is the case of K3 surface, where the Néron-Severi lattice can be enhanced at non-generic values of the parameter.}.
By restricting the period matrix to $\Tr(T_1)$, we obtain a $4\times 4$ invertible matrix $\Pi(1)$ carrying the $(1,1,1,1)$-motive.
Since we have the Picard--Fuchs equation $\Lop^{\rm Had}$ of $\omega_u$ we may extend this matrix by analytic continuation to obtain a (multivalued) function $\Pi(u)$ of $u$.
The main point of this construction is that the monodromy will act on this matrix by multiplication by matrices with integer coefficients.
This follows from the fact that $\Hpara_3(T_u)$ is stable under monodromy, because monodromy acts by a braiding action on $\pi_1(\bbP^1\setminus\Sigma, b)$, and thus extensions are mapped to extensions.

Note that $\Tr(T_u)$ does not have to be a unimodular sublattice of $H_3(T_u)$ --- in most cases we consider, it is not.
\begin{example}
The transcendental lattice $\Tr(T_u)$ of $A\times_u c$ has intersection product given by the matrix
\begin{equation}
\left(\begin{array}{rrrr}0 & 0 & 1 & 0 \\0 & 0 & 0 & 3 \\-1 & 0 & 0 & 0 \\0 & -3 & 0 & 0\end{array}\right)\,.
\end{equation}
In the same basis, $\Hpara_3(T_u)$ is generated by the cycles
\begin{equation}
\left(1,\,0,\,0,\,-1\right)\,,\quad \left(0,\,-1,\,0,\,0\right)\,,\quad \left(0,\,0,\,-1,\,0\right)\,,\quad\text{and}\quad \left(0,\,0,\,0,\,4\right)
\end{equation}
and therefore it is a proper sublattice of index $4$ in $\Tr(T_u)$.
\end{example}

\subsection{The Gamma-class formula}

A basis of solutions of $\Lop^{\rm Had}$ around the MUM point $t=0$ is given by the \emph{scaled Frobenius basis}
\begin{equation} \label{eqn:oursolns}
    \varpi_j(u)=\frac{1}{(2\pi i)^j}\sum_{k=0}^j\binom{j}{k}f_k(u)\text{log}^{j-k}(u).
\end{equation}
for $j=0,1,2,3$, where $f_0$, $f_1$, $f_2$ and $f_3$ are in $\mathbb Q[[u]]$ \parencite{Frobenius_1873, Mezzarobba_2010}.

\begin{example}
For $T_u = A\times_u c$, we have
\begin{equation}
\begin{split}
    f_0(u) &= 1 + 12u + 540u^2 + 37200u^3 + \cdots\,,\\
    f_1(u) &= \hspace{1.65em}40 u + 2196 u^2 + \frac{485680}{3} u^3 + \cdots\,,\\
    f_2(u) &= \hspace{1.55em}16 u + 1920 u^2 + \frac{1551328}{9} u^3 +\cdots\,,\\
    f_3(u) &= \hspace{0.9em}-32 u - 1312 u^2 - \frac{1730624}{27} u^3  \cdots\,.
\end{split}
\end{equation}
\end{example}

Let $\varpi(u)=(\varpi_j(u))_j$ be the (column) vector of solutions and denote by $\Pi(u)$ the periods of $T_u$.
Then it was conjectured (and proven in certain cases) that we have \parencite{CandelasEtAl_1991a, Libgober_1999, KatzarkovEtAl_2008, Iritani_2009, HalversonEtAl_2015, CandelasEtAl_2020}
\begin{equation}
\Pi(u)^t = \rho\varpi(u)
\end{equation}
where 
\begin{equation}\label{eq:gamma_class}
    \rho= (2\pi i)^3\begin{pmatrix} 
    \lambda\chi & \frac{c_2\cdot H}{24} & 0 & \frac{H^3}{3!}\\
    \frac{c_2\cdot H}{24} & \frac{\sigma}{2} & -\frac{H^3}{2!} & 0\\
    1 & 0 & 0 & 0\\
    0 & 1 & 0 & 0
    \end{pmatrix}
\end{equation}
up to an integral change of basis of homology, and up to a rational factor, where $\lambda = \zeta(3)/(2\pi i)^3$ and
 $H^3$, $c_2\cdot H$, $\chi$ and $\sigma$ are integers.
An equation of this type is called a \emph{Gamma-class formula}.
In the case where the Calabi--Yau operator is hypergeometric, for example in the case of the famous quintic threefold \parencite{CandelasEtAl_1991}, geometric realisations $X_u$ are known and these integers are identified with topological invariants of the mirror Calabi--Yau family of $X_u$.
More precisely, $H^3$ corresponds to the triple intersection number of the hyperplane class, $c_2\cdot H$ is the second Chern class, $\chi$ is the Euler characteristic (i.e. the top Chern class) and $\sigma$ is either $0$ or $1$ depending on whether $H^3$ is even or odd respectively.	

Trying to apply this conjecture to the AESZ list by trying to find matching integers, one quickly finds out that this conjecture cannot hold in this state.
A more general formula was proposed in \textcite{KatzEtAl_2024, KatzSchimannek_2023, Schimannek_2025}, of the form
\begin{equation}\label{eq:gamma_class2}
    \rho= (2\pi i)^3\begin{pmatrix} 
    N^2 \lambda\chi + S & N\frac{c_2\cdot H}{24} & 0 & N^2\frac{H^3}{3!}\\
    \frac{c_2\cdot H}{24} & \frac{N\sigma}{2} & -N\frac{H^3}{2!} & 0\\
    1 & 0 & 0 & 0\\
    0 & N & 0 & 0
    \end{pmatrix}\,,
\end{equation}
where $N$ is an integer, and $S$ is another constant conjectured to be rational from the monodromy conjecture of \textcite[Conjecture 1]{VanStraten_2018}.

One of the punchlines of this present text is that we may compute the matrix $\rho$ numerically with hundreds of certified digits of accuracy, in reasonable time, for many different families.
Then using the LLL algorithm \parencite{LenstraEtAl_1982}, we can recover closed formulae for the entries of $\rho$ with a large level of confidence.

\begin{example}
The Gamma-class formula for $T_u = A\times_u c$  agrees with
\begin{equation}
\left(\begin{array}{rrrr}-112 \lambda & 0 & 0 & 4 \\0 & 0 & -12 & 0 \\1 & 0 & 0 & 0 \\0 & 3 & 0 & 0\end{array}\right)\,,
\end{equation}
with 150 certified decimal digits of precision.
It satisfies the conjectural form \eqref{eq:gamma_class2} with
\begin{equation}
\begin{gathered}
\chi=-112\,, \qquad c_2\cdot H=0 \,,\qquad H^3=24\\
S=0\,,\qquad \sigma=0\,, \qquad\text{and} \qquad N=3\,.
\end{gathered}
\end{equation}
\end{example}

\subsection{An extended Gamma-class formula}
Using the methods presented above, we have computed the Gamma-class formulae for all the 105 fibre products of elliptic surfaces considered.
These were all obtained with at least 150 decimal digits of certified precision.
The average computation time was 1 minutes 13 for each fibre product, with a maximum of 3 minutes 27 and a minimum of 32 seconds.

From this we state the following conjectural form for the Gamma-class, which is satisfied by all examples in our list.
\begin{conjecture}\label{conj:gamma_class}
The Gamma-class formula is of the form 
\begin{equation}\label{eq:gamma_class_conj}
    \rho= (2\pi i)^3\begin{pmatrix} 
    \chi\lambda - \frac{\alpha}{2} \frac{c_2\cdot H}{24} -\frac{\delta}{2} \, &\, M\frac{c_2\cdot H}{24} \,& \,\frac{\alpha}{2} \frac{H^3}{2!} \,& \,M\frac{H^3}{3!}\\[6pt]
    \frac{c_2\cdot H}{24} & N\frac{\sigma}{2} & -\frac{H^3}{2!} & 0\\[6pt]
    1 & 0 & 0 & 0\\[6pt]
    \frac{\alpha}{2} \frac{N}{M} & N & 0 & 0
    \end{pmatrix}\,.
\end{equation}
where $\alpha, \delta, \sigma\in\{0,1\}$ and $c_2H, H^3, \chi\in \Z$ and $M,N\in\N$, and with intersection product
\begin{equation}
\left(\begin{array}{rrrr}0 & 0 & M & 0 \\0 & 0 & 0 & N \\-M & 0 & 0 & 0 \\0 & -N & 0 & 0\end{array}\right)\,.
\end{equation}
The corresponding invariants for all the families of elliptic surfaces considered here are given in \tabref{tab:gamma_class_invariants}.
Matching invariants for all the irreducible operators of the Calabi--Yau database with integral monodromy of degree less than 20 are given in \tabref{tab:cydb_gamma_class}.
To the best of the author's knowledge, the invariants $\alpha$ and $\delta$ do not appear in the literature.
\end{conjecture}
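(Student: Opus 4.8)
As \conjref{conj:gamma_class} is asserted to hold exactly for an explicit list of $105$ families (and numerically for the relevant CYDB entries), the plan is twofold: run a verification procedure built entirely from the algorithms of \secref{sec:fibre_products}--\secref{sec:period_evaluation}, and argue separately which features of the template \eqref{eq:gamma_class_conj} are forced rather than fitted.

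\textbf{The verification procedure.} For each ordered pair $S^\sindex{1},S^\sindex{2}$ from \eqref{eq:elliptic_surfaces} I would apply the algorithm of \secref{sec:fibre_products} to the uniform smoothing $T_u^\sindex{\varepsilon}$ to produce a $\Z$-basis of $H_3(\tilde T_u)$ with its intersection product, together with the sublattices $\Hpara_3(T_u)$, $\Lambda_{\rm vc}$, $\operatorname{Sing}(T_u^\varepsilon)$; then saturate $\Hpara_3(T_u)$ in $\Lambda_{\rm vc}^\perp$ to obtain $\Tr(T_u)$ and record the Gram matrix of the intersection product on a basis of it, where the block anti-diagonal shape with entries $\pm M,\pm N$ should appear, the integers $M,N$ recording how the polarized sublattice $\Tr(T_u)$ sits inside the unimodular lattice $H_3(\tilde T_u)$. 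On the analytic side, form the Hadamard Picard--Fuchs operator $\Lop^{\rm Had}$ of the family, use \secref{sec:period_evaluation} to evaluate with certified precision the $4\times 4$ matrix of periods of $\omega,\nabla_u\omega,\nabla_u^2\omega,\nabla_u^3\omega$ over the fixed basis of $\Tr(T_{u_0})$ at a generic $u_0$, analytically continue towards $u=0$ via $\Lop^{\rm Had}$, compute the scaled Frobenius basis $\varpi(u)$ of \eqref{eqn:oursolns}, and solve the resulting linear system for the transition matrix $\rho$ to $\gtrsim 150$ digits. Passing each entry of $\rho/(2\pi i)^3$ through LLL against $\{1,\lambda\}$ recovers closed forms; matching against \eqref{eq:gamma_class_conj} then reads off $\chi,c_2\cdot H,H^3,\sigma,\alpha,\delta$ and checks $\alpha,\delta,\sigma\in\{0,1\}$ as well as the consistency of the $(4,1)$ entry. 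The CYDB entries are handled analogously: from the operator one computes the monodromy representation by analytic continuation of $\varpi$ and uses its integrality to pin the integral frame up to the stated ambiguities, then matches the resulting $\rho$ to the template.

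\textbf{What is forced.} Two features of \eqref{eq:gamma_class_conj} are structural rather than numerical. First, the bottom two rows carry no $\lambda$: in the Gamma-class formalism $\zeta(3)$ enters only through the degree-$6$ part of $\hat\Gamma$, hence only in the coefficient of $\varpi_0$ in the holomorphic period. Second, the triangular pattern of zeros in the upper-right is forced by choosing a basis of $\Tr(T_u)$ adapted to the monodromy weight filtration at the MUM point: row $3$ pairs with $\varpi_0$ alone, row $4$ with $\varpi_0,\varpi_1$, and so on, because the local monodromy at $0$ is a single size-$4$ unipotent Jordan block, which is exactly the MUM hypothesis. The remaining content --- that an \emph{integral} such basis exists realising the exact entries, and the very appearance of the new invariants $\alpha,\delta$ --- is genuinely conjectural.

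\textbf{Main obstacle.} The essential difficulty is precisely that this is a conjecture and the procedure only \emph{verifies} it: the identification of $\rho$ for each example rests on LLL recognition from finitely many (if certified) digits, so it establishes the template only numerically, example by example, and no mechanism is proposed for why the shape --- and in particular the invariants $\alpha,\delta$ --- should persist in general. A genuine proof would require either a Gamma-class theorem for smoothings of fibre products, identifying $\alpha$ and $\delta$ with honest invariants of a (possibly non-existent) geometric realisation, or a purely monodromy-theoretic argument that the computed integral monodromy group, together with the MUM Jordan structure, admits no transition matrix outside \eqref{eq:gamma_class_conj}. Within the verification program itself I expect the hardest steps to be certifying the intersection form on $\Tr(T_u)$ when the colliding fibres are of high Kodaira type, and controlling the precision tightly enough that the LLL output in the last step is unambiguous.
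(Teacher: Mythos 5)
Your proposal matches the paper's own treatment: since this is a conjecture, the paper likewise supports it only by computing $\rho$ numerically (via the homology/period algorithms of \secref{sec:fibre_products}--\secref{sec:period_evaluation}, restriction to $\Tr(T_u)$, analytic continuation to the MUM point, and LLL recognition against $\{1,\lambda\}$ to at least 150 certified digits) for the 105 Hadamard products and the CYDB operators, with no claim of proof. Your added remarks on which features are structurally forced and on the limits of LLL-based verification are consistent with, and slightly more explicit than, what the paper says.
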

In particular the case $M=N=1$ and $\alpha=\delta=0$ coincides with \eqref{eq:gamma_class}.
These invariant are not unique.
In particular when $\alpha=0$, $c_2\cdot H$ is only defined up to $24N/\operatorname{gcd}(N,M)$.
When $\alpha\ne 0$ this can also be extended up to allowing $\alpha$ to be any odd integer.

\begin{remark}
Although we keep the notations of \eqref{eq:gamma_class}, we do not make any statement about the meaning of the invariants.
\end{remark}

\begin{remark}
We remark that every single possible combination of $(\alpha, \delta, \sigma) \in \{0,1\}^3$ appears in \tabref{tab:gamma_class_invariants}.
\end{remark}

We have checked \conjref{conj:gamma_class} on the examples of the CYDB with degree less than 20, and obtained matches for all the irreducible operators of the database that admit integral monodromy. 
Furthermore the intersection product mentioned in \conjref{conj:gamma_class} is invariant under monodromy in these families, which implies it is correct up to a scalar.
The results are compiled in \tabref{tab:cydb_gamma_class}.
Some operators have monodromy with coefficients in a number field, and will require further care to figure out ---  this, along with the details of the computation of \tabref{tab:cydb_gamma_class}, is work in preparation.

We remark that for 354 of these 613 operators, we find $N=M$.
This means the operators have integral $\operatorname{Sp}_4(\Z)$-monodromy, that is, monodromy preserving the intersection product
\begin{equation}
\left(\begin{array}{rrrr}0 & 0 & 1 & 0 \\0 & 0 & 0 & 1 \\-1 & 0 & 0 & 0 \\0 & -1 & 0 & 0\end{array}\right)
\end{equation}
in some basis for which the monodromy has integer coefficients.
In \tabref{tab:cydb_gamma_class}, we have tried to set $N$ as low as possible while maintaining integral monodromy.
In all cases, $M$ divides $N$.

\vfill
\begin{center}
    \textit{(continued on next page)}
\end{center}
\vfill

\begingroup
\footnotesize
\begin{table}[]
\centering
\caption{Gamma-class invariants for the 105 Hadamard products of the elliptic surfaces given in \eqref{eq:elliptic_surfaces}.}
\label{tab:gamma_class_invariants}
\begin{minipage}{0.48\textwidth}
\begin{tabular}{ccccccccc}
\toprule
$T_u$ & $\chi$ & $c_2\cdot H$ & $H^3$ & $\sigma$ &$\alpha$ & $\delta$ &  $N$ & $M$ \\
\midrule
$A\times_u A$ & $-128$ & $184$ & $16$ & $0$ & $0$ & $0$ & $1$ & $1$\\
$A\times_u B$ & $-144$ & $12$ & $12$ & $0$ & $0$ & $0$ & $1$ & $1$\\
$A\times_u C$ & $-176$ & $8$ & $8$ & $0$ & $0$ & $0$ & $1$ & $1$\\
$A\times_u D$ & $-256$ & $4$ & $4$ & $0$ & $0$ & $0$ & $1$ & $1$\\
$A\times_u a$ & $-120$ & $24$ & $24$ & $0$ & $0$ & $0$ & $2$ & $1$\\
$A\times_u b$ & $-120$ & $20$ & $20$ & $0$ & $0$ & $0$ & $1$ & $1$\\
$A\times_u c$ & $-112$ & $0$ & $24$ & $0$ & $0$ & $0$ & $3$ & $1$\\
$A\times_u d$ & $-88$ & $40$ & $16$ & $0$ & $0$ & $0$ & $4$ & $2$\\
$A\times_u e$ & $96$ & $112$ & $16$ & $0$ & $1$ & $0$ & $4$ & $1$\\
$A\times_u f$ & $-120$ & $32$ & $12$ & $0$ & $1$ & $1$ & $3$ & $3$\\
$A\times_u g$ & $-8$ & $-96$ & $24$ & $0$ & $1$ & $0$ & $6$ & $1$\\
$A\times_u h$ & $168$ & $216$ & $12$ & $0$ & $1$ & $1$ & $3$ & $1$\\
$A\times_u i$ & $272$ & $32$ & $8$ & $0$ & $1$ & $0$ & $2$ & $1$\\
$A\times_u j$ & $472$ & $64$ & $4$ & $0$ & $1$ & $1$ & $1$ & $1$\\
$B\times_u B$ & $-144$ & $54$ & $9$ & $1$ & $0$ & $0$ & $3$ & $1$\\
$B\times_u C$ & $-156$ & $48$ & $6$ & $0$ & $0$ & $0$ & $3$ & $1$\\
$B\times_u D$ & $-204$ & $42$ & $3$ & $1$ & $0$ & $0$ & $3$ & $1$\\
$B\times_u a$ & $-162$ & $72$ & $18$ & $0$ & $0$ & $0$ & $6$ & $1$\\
$B\times_u b$ & $-150$ & $66$ & $15$ & $1$ & $0$ & $0$ & $3$ & $1$\\
$B\times_u c$ & $-156$ & $0$ & $18$ & $0$ & $0$ & $0$ & $3$ & $1$\\
$B\times_u d$ & $-162$ & $42$ & $12$ & $0$ & $0$ & $0$ & $12$ & $2$\\
$B\times_u e$ & $24$ & $24$ & $12$ & $0$ & $1$ & $0$ & $12$ & $1$\\
$B\times_u f$ & $-198$ & $81$ & $9$ & $1$ & $1$ & $1$ & $3$ & $3$\\
$B\times_u g$ & $-78$ & $54$ & $18$ & $0$ & $1$ & $0$ & $6$ & $1$\\
$B\times_u h$ & $90$ & $-63$ & $9$ & $1$ & $1$ & $1$ & $3$ & $1$\\
$B\times_u i$ & $180$ & $-6$ & $6$ & $0$ & $1$ & $0$ & $6$ & $1$\\
$B\times_u j$ & $342$ & $51$ & $3$ & $1$ & $1$ & $1$ & $3$ & $1$\\
$C\times_u C$ & $-144$ & $40$ & $4$ & $0$ & $0$ & $0$ & $2$ & $1$\\
$C\times_u D$ & $-156$ & $32$ & $2$ & $0$ & $0$ & $0$ & $2$ & $1$\\
$C\times_u a$ & $-228$ & $24$ & $12$ & $0$ & $0$ & $0$ & $2$ & $1$\\
$C\times_u b$ & $-200$ & $16$ & $10$ & $0$ & $0$ & $0$ & $2$ & $1$\\
$C\times_u c$ & $-224$ & $72$ & $12$ & $0$ & $0$ & $0$ & $6$ & $1$\\
$C\times_u d$ & $-268$ & $44$ & $8$ & $0$ & $0$ & $0$ & $4$ & $2$\\
$C\times_u e$ & $-64$ & $32$ & $8$ & $0$ & $1$ & $0$ & $4$ & $1$\\
$C\times_u f$ & $-312$ & $82$ & $6$ & $0$ & $1$ & $1$ & $6$ & $3$\\
$C\times_u g$ & $-172$ & $204$ & $12$ & $0$ & $1$ & $0$ & $6$ & $1$\\
$C\times_u h$ & $0$ & $-126$ & $6$ & $0$ & $1$ & $1$ & $6$ & $1$\\
$C\times_u i$ & $80$ & $52$ & $4$ & $0$ & $1$ & $0$ & $2$ & $1$\\
$C\times_u j$ & $208$ & $-10$ & $2$ & $0$ & $1$ & $1$ & $2$ & $1$\\
$D\times_u D$ & $-120$ & $22$ & $1$ & $1$ & $0$ & $0$ & $1$ & $1$\\
$D\times_u a$ & $-366$ & $24$ & $6$ & $0$ & $0$ & $0$ & $2$ & $1$\\
$D\times_u b$ & $-310$ & $14$ & $5$ & $1$ & $0$ & $0$ & $1$ & $1$\\
$D\times_u c$ & $-364$ & $0$ & $6$ & $0$ & $0$ & $0$ & $3$ & $1$\\
$D\times_u d$ & $-470$ & $46$ & $4$ & $0$ & $0$ & $0$ & $4$ & $2$\\
$D\times_u e$ & $-200$ & $40$ & $4$ & $0$ & $1$ & $0$ & $4$ & $1$\\
$D\times_u f$ & $-534$ & $59$ & $3$ & $1$ & $1$ & $1$ & $3$ & $3$\\
$D\times_u g$ & $-338$ & $66$ & $6$ & $0$ & $1$ & $0$ & $6$ & $1$\\
$D\times_u h$ & $-126$ & $27$ & $3$ & $1$ & $1$ & $1$ & $3$ & $1$\\
$D\times_u i$ & $-44$ & $14$ & $2$ & $0$ & $1$ & $0$ & $2$ & $1$\\
$D\times_u j$ & $62$ & $25$ & $1$ & $1$ & $1$ & $1$ & $1$ & $1$\\
$a\times_u a$ & $-72$ & $24$ & $36$ & $0$ & $0$ & $0$ & $2$ & $1$\\
$a\times_u b$ & $-90$ & $24$ & $30$ & $0$ & $0$ & $0$ & $2$ & $1$\\
$a\times_u c$ & $-60$ & $72$ & $36$ & $0$ & $0$ & $0$ & $6$ & $1$\\
  \bottomrule
\end{tabular}
\end{minipage}%
\hfill
\begin{minipage}{0.48\textwidth}
\begin{tabular}{ccccccccc}
\toprule
$T_u$ & $\chi$ & $c_2\cdot H$ & $H^3$ & $\sigma$ &$\alpha$ & $\delta$ &  $N$ & $M$ \\
\midrule
$a\times_u d$ & $12$ & $36$ & $24$ & $0$ & $0$ & $0$ & $4$ & $2$\\
$a\times_u e$ & $216$ & $0$ & $24$ & $0$ & $1$ & $0$ & $4$ & $1$\\
$a\times_u f$ & $-18$ & $30$ & $18$ & $0$ & $1$ & $1$ & $6$ & $3$\\
$a\times_u g$ & $96$ & $-396$ & $36$ & $0$ & $1$ & $0$ & $6$ & $1$\\
$a\times_u h$ & $306$ & $-18$ & $18$ & $0$ & $1$ & $1$ & $6$ & $1$\\
$a\times_u i$ & $444$ & $12$ & $12$ & $0$ & $1$ & $0$ & $2$ & $1$\\
$a\times_u j$ & $726$ & $42$ & $6$ & $0$ & $1$ & $1$ & $2$ & $1$\\
$b\times_u b$ & $-100$ & $22$ & $25$ & $1$ & $0$ & $0$ & $1$ & $1$\\
$b\times_u c$ & $-80$ & $0$ & $30$ & $0$ & $0$ & $0$ & $3$ & $1$\\
$b\times_u d$ & $-30$ & $38$ & $20$ & $0$ & $0$ & $0$ & $4$ & $2$\\
$b\times_u e$ & $160$ & $8$ & $20$ & $0$ & $1$ & $0$ & $4$ & $1$\\
$b\times_u f$ & $-60$ & $127$ & $15$ & $1$ & $1$ & $1$ & $3$ & $3$\\
$b\times_u g$ & $50$ & $-390$ & $30$ & $0$ & $1$ & $0$ & $6$ & $1$\\
$b\times_u h$ & $240$ & $63$ & $15$ & $1$ & $1$ & $1$ & $3$ & $1$\\
$b\times_u i$ & $360$ & $118$ & $10$ & $0$ & $1$ & $0$ & $2$ & $1$\\
$b\times_u j$ & $600$ & $-115$ & $5$ & $1$ & $1$ & $1$ & $1$ & $1$\\
$c\times_u c$ & $-48$ & $0$ & $36$ & $0$ & $0$ & $0$ & $3$ & $1$\\
$c\times_u d$ & $28$ & $36$ & $24$ & $0$ & $0$ & $0$ & $12$ & $2$\\
$c\times_u e$ & $224$ & $0$ & $24$ & $0$ & $1$ & $0$ & $12$ & $1$\\
$c\times_u f$ & $0$ & $54$ & $18$ & $0$ & $1$ & $1$ & $3$ & $3$\\
$c\times_u g$ & $108$ & $36$ & $36$ & $0$ & $1$ & $0$ & $6$ & $1$\\
$c\times_u h$ & $312$ & $198$ & $18$ & $0$ & $1$ & $1$ & $3$ & $1$\\
$c\times_u i$ & $448$ & $108$ & $12$ & $0$ & $1$ & $0$ & $6$ & $1$\\
$c\times_u j$ & $728$ & $-54$ & $6$ & $0$ & $1$ & $1$ & $3$ & $1$\\
$d\times_u d$ & $80$ & $16$ & $16$ & $0$ & $0$ & $0$ & $4$ & $2$\\
$d\times_u e$ & $360$ & $40$ & $16$ & $0$ & $0$ & $0$ & $4$ & $2$\\
$d\times_u f$ & $138$ & $38$ & $12$ & $0$ & $0$ & $1$ & $12$ & $6$\\
$d\times_u g$ & $236$ & $12$ & $24$ & $0$ & $0$ & $0$ & $12$ & $2$\\
$d\times_u h$ & $462$ & $54$ & $12$ & $0$ & $0$ & $1$ & $12$ & $2$\\
$d\times_u i$ & $628$ & $20$ & $8$ & $0$ & $0$ & $0$ & $4$ & $2$\\
$d\times_u j$ & $986$ & $34$ & $4$ & $0$ & $0$ & $1$ & $4$ & $2$\\
$e\times_u e$ & $320$ & $64$ & $16$ & $0$ & $0$ & $0$ & $4$ & $1$\\
$e\times_u i$ & $384$ & $8$ & $8$ & $0$ & $0$ & $0$ & $4$ & $1$\\
$e\times_u j$ & $528$ & $28$ & $4$ & $0$ & $0$ & $1$ & $4$ & $1$\\
$f\times_u e$ & $384$ & $20$ & $12$ & $0$ & $0$ & $1$ & $12$ & $3$\\
$f\times_u f$ & $36$ & $6$ & $9$ & $1$ & $0$ & $0$ & $3$ & $3$\\
$f\times_u g$ & $234$ & $0$ & $18$ & $0$ & $0$ & $1$ & $6$ & $3$\\
$f\times_u h$ & $504$ & $6$ & $9$ & $1$ & $0$ & $0$ & $3$ & $3$\\
$f\times_u i$ & $696$ & $40$ & $6$ & $0$ & $0$ & $1$ & $6$ & $3$\\
$f\times_u j$ & $1104$ & $2$ & $3$ & $1$ & $0$ & $0$ & $3$ & $3$\\
$g\times_u e$ & $328$ & $120$ & $24$ & $0$ & $0$ & $0$ & $12$ & $1$\\
$g\times_u g$ & $264$ & $72$ & $36$ & $0$ & $0$ & $0$ & $6$ & $1$\\
$g\times_u h$ & $390$ & $0$ & $18$ & $0$ & $0$ & $1$ & $6$ & $1$\\
$g\times_u i$ & $500$ & $24$ & $12$ & $0$ & $0$ & $0$ & $6$ & $1$\\
$g\times_u j$ & $754$ & $48$ & $6$ & $0$ & $0$ & $1$ & $6$ & $1$\\
$h\times_u e$ & $336$ & $180$ & $12$ & $0$ & $0$ & $1$ & $12$ & $1$\\
$h\times_u h$ & $324$ & $54$ & $9$ & $1$ & $0$ & $0$ & $3$ & $1$\\
$h\times_u i$ & $336$ & $72$ & $6$ & $0$ & $0$ & $1$ & $6$ & $1$\\
$h\times_u j$ & $420$ & $18$ & $3$ & $1$ & $0$ & $0$ & $3$ & $1$\\
$i\times_u i$ & $304$ & $40$ & $4$ & $0$ & $0$ & $0$ & $2$ & $1$\\
$i\times_u j$ & $320$ & $8$ & $2$ & $0$ & $0$ & $1$ & $2$ & $1$\\
$j\times_u j$ & $244$ & $22$ & $1$ & $1$ & $0$ & $0$ & $1$ & $1$\\
\\
  \bottomrule
\end{tabular}
\end{minipage}
\end{table}


\begin{adjustwidth}{-10pt}{0cm}
\setlength{\columnsep}{30pt}
\twocolumn
\begin{strip}
\captionof{table}{ The Gamma-class invariants of the irreducible Calabi--Yau operators of order 4 of the CYDB with integral monodromy, up to degree 20, as well as $4.24.3$} \label{tab:cydb_gamma_class}
\end{strip}
\begingroup
\footnotesize
\tablehead{\toprule New Number & $\chi$ & $c_2\cdot H$ & $H^3$ & $\sigma$ &$\alpha$ & $\delta$ &  $N$ & $M$\\\midrule}
\tabletail{\bottomrule}
\begin{supertabular}[c]{@{}ccccccccc@{}}
$4\,.\,1\,.\,1$ & $-200$ & $2$ & $5$ & $1$ & $0$ & $0$ & $1$ & $1$\\
$4\,.\,1\,.\,2$ & $-288$ & $10$ & $1$ & $1$ & $0$ & $0$ & $1$ & $1$\\
$4\,.\,1\,.\,3$ & $-128$ & $16$ & $16$ & $0$ & $0$ & $0$ & $1$ & $1$\\
$4\,.\,1\,.\,4$ & $-144$ & $6$ & $9$ & $1$ & $0$ & $0$ & $1$ & $1$\\
$4\,.\,1\,.\,5$ & $-144$ & $12$ & $12$ & $0$ & $0$ & $0$ & $1$ & $1$\\
$4\,.\,1\,.\,6$ & $-176$ & $8$ & $8$ & $0$ & $0$ & $0$ & $1$ & $1$\\
$4\,.\,1\,.\,7$ & $-296$ & $20$ & $2$ & $0$ & $0$ & $0$ & $1$ & $1$\\
$4\,.\,1\,.\,8$ & $-204$ & $18$ & $3$ & $1$ & $0$ & $0$ & $1$ & $1$\\
$4\,.\,1\,.\,9$ & $-484$ & $22$ & $1$ & $1$ & $0$ & $0$ & $1$ & $1$\\
$4\,.\,1\,.\,10$ & $-144$ & $16$ & $4$ & $0$ & $0$ & $0$ & $1$ & $1$\\
$4\,.\,1\,.\,11$ & $-156$ & $0$ & $6$ & $0$ & $0$ & $0$ & $1$ & $1$\\
$4\,.\,1\,.\,12$ & $-156$ & $8$ & $2$ & $0$ & $0$ & $0$ & $1$ & $1$\\
$4\,.\,1\,.\,13$ & $-120$ & $22$ & $1$ & $1$ & $0$ & $0$ & $1$ & $1$\\
$4\,.\,1\,.\,14$ & $-256$ & $4$ & $4$ & $0$ & $0$ & $0$ & $1$ & $1$\\
$4\,.\,2\,.\,1$ & $-120$ & $24$ & $24$ & $0$ & $0$ & $0$ & $2$ & $1$\\
$4\,.\,2\,.\,2$ & $-162$ & $24$ & $18$ & $1$ & $0$ & $0$ & $2$ & $1$\\
$4\,.\,2\,.\,3$ & $-228$ & $24$ & $12$ & $0$ & $0$ & $0$ & $2$ & $1$\\
$4\,.\,2\,.\,4$ & $-366$ & $24$ & $6$ & $1$ & $0$ & $0$ & $2$ & $1$\\
$4\,.\,2\,.\,5$ & $-120$ & $20$ & $20$ & $0$ & $0$ & $0$ & $1$ & $1$\\
$4\,.\,2\,.\,6$ & $-150$ & $18$ & $15$ & $1$ & $0$ & $0$ & $1$ & $1$\\
$4\,.\,2\,.\,7$ & $-200$ & $16$ & $10$ & $0$ & $0$ & $0$ & $1$ & $1$\\
$4\,.\,2\,.\,8$ & $-310$ & $14$ & $5$ & $1$ & $0$ & $0$ & $1$ & $1$\\
$4\,.\,2\,.\,9$ & $-112$ & $0$ & $24$ & $0$ & $0$ & $0$ & $3$ & $1$\\
$4\,.\,2\,.\,10$ & $-156$ & $0$ & $18$ & $0$ & $0$ & $0$ & $3$ & $1$\\
$4\,.\,2\,.\,11$ & $-224$ & $0$ & $12$ & $0$ & $0$ & $0$ & $3$ & $1$\\
$4\,.\,2\,.\,12$ & $-364$ & $0$ & $6$ & $0$ & $0$ & $0$ & $3$ & $1$\\
$4\,.\,2\,.\,13$ & $-88$ & $40$ & $16$ & $0$ & $0$ & $0$ & $4$ & $2$\\
$4\,.\,2\,.\,14$ & $-162$ & $42$ & $12$ & $0$ & $0$ & $0$ & $4$ & $2$\\
$4\,.\,2\,.\,15$ & $-268$ & $44$ & $8$ & $0$ & $0$ & $0$ & $4$ & $2$\\
$4\,.\,2\,.\,16$ & $-470$ & $46$ & $4$ & $0$ & $0$ & $0$ & $4$ & $2$\\
$4\,.\,2\,.\,17$ & $96$ & $400$ & $16$ & $0$ & $1$ & $0$ & $4$ & $1$\\
$4\,.\,2\,.\,18$ & $24$ & $312$ & $12$ & $0$ & $1$ & $0$ & $4$ & $1$\\
$4\,.\,2\,.\,19$ & $-200$ & $136$ & $4$ & $0$ & $1$ & $0$ & $4$ & $1$\\
$4\,.\,2\,.\,20$ & $-120$ & $992$ & $12$ & $0$ & $1$ & $1$ & $3$ & $3$\\
$4\,.\,2\,.\,21$ & $-198$ & $33$ & $9$ & $0$ & $1$ & $1$ & $3$ & $3$\\
$4\,.\,2\,.\,22$ & $-312$ & $514$ & $6$ & $0$ & $1$ & $1$ & $3$ & $3$\\
$4\,.\,2\,.\,23$ & $-534$ & $35$ & $3$ & $0$ & $1$ & $1$ & $3$ & $3$\\
$4\,.\,2\,.\,24$ & $-8$ & $624$ & $24$ & $0$ & $1$ & $0$ & $6$ & $1$\\
$4\,.\,2\,.\,25$ & $-78$ & $486$ & $18$ & $0$ & $1$ & $0$ & $6$ & $1$\\
$4\,.\,2\,.\,26$ & $-172$ & $348$ & $12$ & $0$ & $1$ & $0$ & $6$ & $1$\\
$4\,.\,2\,.\,27$ & $-338$ & $210$ & $6$ & $0$ & $1$ & $0$ & $6$ & $1$\\
$4\,.\,2\,.\,28$ & $168$ & $288$ & $12$ & $0$ & $1$ & $1$ & $3$ & $1$\\
$4\,.\,2\,.\,29$ & $90$ & $225$ & $9$ & $0$ & $1$ & $1$ & $3$ & $1$\\
$4\,.\,2\,.\,30$ & $-126$ & $99$ & $3$ & $0$ & $1$ & $1$ & $3$ & $1$\\
$4\,.\,2\,.\,31$ & $272$ & $176$ & $8$ & $0$ & $1$ & $0$ & $2$ & $1$\\
$4\,.\,2\,.\,32$ & $180$ & $138$ & $6$ & $0$ & $1$ & $0$ & $2$ & $1$\\
$4\,.\,2\,.\,33$ & $80$ & $100$ & $4$ & $0$ & $1$ & $0$ & $2$ & $1$\\
$4\,.\,2\,.\,34$ & $-44$ & $62$ & $2$ & $0$ & $1$ & $0$ & $2$ & $1$\\
$4\,.\,2\,.\,35$ & $472$ & $64$ & $4$ & $0$ & $1$ & $1$ & $1$ & $1$\\
$4\,.\,2\,.\,36$ & $342$ & $51$ & $3$ & $0$ & $1$ & $1$ & $1$ & $1$\\
$4\,.\,2\,.\,37$ & $208$ & $38$ & $2$ & $0$ & $1$ & $1$ & $1$ & $1$\\
$4\,.\,2\,.\,38$ & $62$ & $25$ & $1$ & $0$ & $1$ & $1$ & $1$ & $1$\\
$4\,.\,2\,.\,40$ & $580$ & $-11$ & $1$ & $0$ & $1$ & $0$ & $1$ & $1$\\
$4\,.\,2\,.\,41$ & $324$ & $51$ & $3$ & $0$ & $1$ & $0$ & $3$ & $3$\\
$4\,.\,2\,.\,46$ & $972$ & $-23$ & $1$ & $0$ & $1$ & $0$ & $1$ & $1$\\
$4\,.\,2\,.\,47$ & $304$ & $2$ & $2$ & $0$ & $0$ & $0$ & $2$ & $2$\\
$4\,.\,2\,.\,50$ & $244$ & $1$ & $1$ & $0$ & $1$ & $0$ & $1$ & $1$\\
$4\,.\,2\,.\,51$ & $528$ & $20$ & $2$ & $0$ & $0$ & $0$ & $2$ & $2$\\
$4\,.\,2\,.\,52$ & $-128$ & $0$ & $48$ & $0$ & $0$ & $0$ & $4$ & $1$\\
$4\,.\,2\,.\,53$ & $-116$ & $0$ & $24$ & $0$ & $0$ & $0$ & $1$ & $1$\\
$4\,.\,2\,.\,54$ & $-180$ & $48$ & $24$ & $0$ & $1$ & $0$ & $3$ & $3$\\
$4\,.\,2\,.\,55$ & $-116$ & $8$ & $32$ & $0$ & $0$ & $0$ & $1$ & $1$\\
$4\,.\,2\,.\,56$ & $-120$ & $12$ & $36$ & $0$ & $0$ & $0$ & $1$ & $1$\\
$4\,.\,2\,.\,57$ & $-200$ & $128$ & $20$ & $0$ & $1$ & $1$ & $5$ & $5$\\
$4\,.\,2\,.\,58$ & $36$ & $96$ & $12$ & $0$ & $1$ & $1$ & $3$ & $3$\\
$4\,.\,2\,.\,59$ & $108$ & $64$ & $4$ & $0$ & $1$ & $1$ & $1$ & $1$\\
$4\,.\,2\,.\,60$ & $-128$ & $40$ & $40$ & $0$ & $0$ & $0$ & $2$ & $1$\\
$4\,.\,2\,.\,61$ & $-116$ & $4$ & $28$ & $0$ & $0$ & $0$ & $1$ & $1$\\
$4\,.\,2\,.\,62$ & $-96$ & $12$ & $42$ & $0$ & $0$ & $0$ & $1$ & $1$\\
$4\,.\,2\,.\,63$ & $96$ & $144$ & $48$ & $0$ & $0$ & $0$ & $8$ & $1$\\
$4\,.\,2\,.\,64$ & $-96$ & $66$ & $66$ & $0$ & $0$ & $0$ & $6$ & $2$\\
$4\,.\,2\,.\,65$ & $-128$ & $16$ & $24$ & $0$ & $1$ & $0$ & $6$ & $3$\\
$4\,.\,2\,.\,69$ & $-128$ & $40$ & $40$ & $0$ & $0$ & $0$ & $8$ & $4$\\
$4\,.\,2\,.\,71$ & $80$ & $100$ & $16$ & $0$ & $1$ & $0$ & $4$ & $2$\\
$4\,.\,3\,.\,1$ & $-80$ & $0$ & $120$ & $0$ & $0$ & $0$ & $5$ & $1$\\
$4\,.\,3\,.\,2$ & $160$ & $-21$ & $3$ & $0$ & $1$ & $0$ & $1$ & $1$\\
$4\,.\,3\,.\,3$ & $136$ & $-12$ & $12$ & $0$ & $1$ & $0$ & $2$ & $1$\\
$4\,.\,3\,.\,4$ & $108$ & $9$ & $9$ & $0$ & $1$ & $0$ & $9$ & $9$\\
$4\,.\,3\,.\,5$ & $-88$ & $12$ & $18$ & $0$ & $0$ & $0$ & $1$ & $1$\\
$4\,.\,3\,.\,7$ & $12$ & $6$ & $9$ & $1$ & $0$ & $0$ & $1$ & $1$\\
$4\,.\,3\,.\,8$ & $-72$ & $12$ & $18$ & $1$ & $0$ & $0$ & $2$ & $2$\\
$4\,.\,3\,.\,9$ & $-100$ & $22$ & $25$ & $1$ & $0$ & $0$ & $1$ & $1$\\
$4\,.\,3\,.\,10$ & $-48$ & $0$ & $12$ & $0$ & $0$ & $0$ & $3$ & $3$\\
$4\,.\,3\,.\,11$ & $264$ & $12$ & $6$ & $1$ & $0$ & $0$ & $6$ & $6$\\
$4\,.\,3\,.\,15$ & $-58$ & $100$ & $16$ & $0$ & $1$ & $1$ & $1$ & $1$\\
$4\,.\,3\,.\,16$ & $144$ & $12$ & $6$ & $0$ & $0$ & $0$ & $3$ & $3$\\
$4\,.\,3\,.\,17$ & $80$ & $12$ & $6$ & $1$ & $0$ & $0$ & $2$ & $2$\\
$4\,.\,3\,.\,18$ & $-64$ & $12$ & $6$ & $0$ & $0$ & $0$ & $1$ & $1$\\
$4\,.\,3\,.\,19$ & $300$ & $14$ & $5$ & $1$ & $0$ & $0$ & $5$ & $5$\\
$4\,.\,3\,.\,20$ & $224$ & $10$ & $7$ & $1$ & $0$ & $0$ & $7$ & $7$\\
$4\,.\,3\,.\,24$ & $-148$ & $272$ & $56$ & $0$ & $1$ & $0$ & $2$ & $1$\\
$4\,.\,3\,.\,25$ & $-152$ & $288$ & $60$ & $0$ & $1$ & $1$ & $3$ & $1$\\
$4\,.\,3\,.\,26$ & $40$ & $4$ & $10$ & $0$ & $0$ & $0$ & $5$ & $5$\\
$4\,.\,3\,.\,31$ & $-128$ & $20$ & $44$ & $0$ & $0$ & $0$ & $1$ & $1$\\
$4\,.\,3\,.\,32$ & $-8$ & $8$ & $8$ & $0$ & $0$ & $0$ & $1$ & $1$\\
$4\,.\,3\,.\,33$ & $192$ & $20$ & $12$ & $1$ & $0$ & $0$ & $12$ & $12$\\
$4\,.\,3\,.\,34$ & $160$ & $8$ & $8$ & $1$ & $0$ & $0$ & $8$ & $8$\\
$4\,.\,4\,.\,5$ & $384$ & $4$ & $6$ & $0$ & $0$ & $0$ & $6$ & $6$\\
$4\,.\,4\,.\,6$ & $160$ & $22$ & $10$ & $1$ & $0$ & $1$ & $2$ & $2$\\
$4\,.\,4\,.\,7$ & $312$ & $0$ & $6$ & $0$ & $0$ & $0$ & $3$ & $3$\\
$4\,.\,4\,.\,15$ & $444$ & $12$ & $6$ & $1$ & $0$ & $0$ & $2$ & $2$\\
$4\,.\,4\,.\,16$ & $628$ & $22$ & $4$ & $0$ & $0$ & $0$ & $4$ & $4$\\
$4\,.\,4\,.\,23$ & $1104$ & $-37$ & $3$ & $0$ & $1$ & $0$ & $3$ & $3$\\
$4\,.\,4\,.\,24$ & $600$ & $14$ & $5$ & $1$ & $0$ & $1$ & $1$ & $1$\\
$4\,.\,4\,.\,33$ & $-88$ & $0$ & $12$ & $0$ & $0$ & $0$ & $2$ & $1$\\
$4\,.\,4\,.\,34$ & $-120$ & $10$ & $13$ & $1$ & $0$ & $0$ & $1$ & $1$\\
$4\,.\,4\,.\,35$ & $160$ & $8$ & $8$ & $0$ & $0$ & $0$ & $8$ & $8$\\
$4\,.\,4\,.\,36$ & $-120$ & $22$ & $7$ & $1$ & $0$ & $0$ & $1$ & $1$\\
$4\,.\,4\,.\,37$ & $-128$ & $20$ & $44$ & $0$ & $0$ & $0$ & $1$ & $1$\\
$4\,.\,4\,.\,38$ & $48$ & $13$ & $1$ & $0$ & $1$ & $1$ & $1$ & $1$\\
$4\,.\,4\,.\,39$ & $-116$ & $34$ & $10$ & $0$ & $1$ & $0$ & $1$ & $1$\\
$4\,.\,4\,.\,40$ & $-44$ & $26$ & $2$ & $0$ & $1$ & $0$ & $1$ & $1$\\
$4\,.\,4\,.\,41$ & $-8$ & $8$ & $8$ & $0$ & $0$ & $0$ & $1$ & $1$\\
$4\,.\,4\,.\,42$ & $-128$ & $10$ & $10$ & $0$ & $0$ & $0$ & $2$ & $2$\\
$4\,.\,4\,.\,43$ & $640$ & $20$ & $2$ & $1$ & $0$ & $0$ & $2$ & $2$\\
$4\,.\,4\,.\,44$ & $-120$ & $4$ & $10$ & $1$ & $0$ & $0$ & $2$ & $1$\\
$4\,.\,4\,.\,45$ & $-72$ & $16$ & $4$ & $0$ & $0$ & $0$ & $4$ & $4$\\
$4\,.\,4\,.\,46$ & $-78$ & $44$ & $8$ & $0$ & $0$ & $0$ & $2$ & $1$\\
$4\,.\,4\,.\,47$ & $-18$ & $28$ & $4$ & $1$ & $0$ & $0$ & $4$ & $1$\\
$4\,.\,4\,.\,48$ & $-60$ & $4$ & $4$ & $0$ & $0$ & $0$ & $1$ & $1$\\
$4\,.\,4\,.\,49$ & $192$ & $100$ & $4$ & $0$ & $1$ & $0$ & $4$ & $1$\\
$4\,.\,4\,.\,50$ & $-92$ & $-124$ & $8$ & $0$ & $1$ & $1$ & $1$ & $1$\\
$4\,.\,4\,.\,51$ & $-32$ & $4$ & $4$ & $0$ & $0$ & $0$ & $1$ & $1$\\
$4\,.\,4\,.\,52$ & $48$ & $32$ & $8$ & $0$ & $0$ & $0$ & $2$ & $1$\\
$4\,.\,4\,.\,53$ & $180$ & $8$ & $2$ & $0$ & $0$ & $0$ & $1$ & $1$\\
$4\,.\,4\,.\,54$ & $24$ & $28$ & $4$ & $0$ & $1$ & $1$ & $1$ & $1$\\
$4\,.\,4\,.\,55$ & $1200$ & $2$ & $2$ & $0$ & $0$ & $0$ & $2$ & $2$\\
$4\,.\,4\,.\,56$ & $136$ & $16$ & $4$ & $0$ & $0$ & $0$ & $2$ & $1$\\
$4\,.\,4\,.\,57$ & $0$ & $24$ & $6$ & $0$ & $0$ & $0$ & $3$ & $1$\\
$4\,.\,4\,.\,58$ & $-44$ & $2$ & $5$ & $1$ & $0$ & $0$ & $1$ & $1$\\
$4\,.\,4\,.\,59$ & $-44$ & $23$ & $3$ & $0$ & $1$ & $1$ & $3$ & $3$\\
$4\,.\,4\,.\,60$ & $24$ & $14$ & $2$ & $0$ & $0$ & $0$ & $2$ & $2$\\
$4\,.\,4\,.\,61$ & $-16$ & $20$ & $2$ & $0$ & $0$ & $0$ & $1$ & $1$\\
$4\,.\,4\,.\,62$ & $-92$ & $6$ & $3$ & $1$ & $0$ & $0$ & $1$ & $1$\\
$4\,.\,4\,.\,63$ & $104$ & $1$ & $1$ & $0$ & $1$ & $0$ & $1$ & $1$\\
$4\,.\,4\,.\,64$ & $192$ & $4$ & $4$ & $0$ & $0$ & $0$ & $1$ & $1$\\
$4\,.\,4\,.\,65$ & $-72$ & $-16$ & $8$ & $0$ & $1$ & $1$ & $2$ & $2$\\
$4\,.\,4\,.\,66$ & $-130$ & $14$ & $2$ & $1$ & $0$ & $1$ & $2$ & $2$\\
$4\,.\,4\,.\,67$ & $384$ & $10$ & $1$ & $1$ & $0$ & $0$ & $1$ & $1$\\
$4\,.\,4\,.\,68$ & $-102$ & $14$ & $5$ & $1$ & $0$ & $0$ & $1$ & $1$\\
$4\,.\,4\,.\,69$ & $-64$ & $20$ & $6$ & $0$ & $0$ & $0$ & $3$ & $3$\\
$4\,.\,4\,.\,70$ & $432$ & $20$ & $2$ & $0$ & $0$ & $0$ & $1$ & $1$\\
$4\,.\,4\,.\,71$ & $192$ & $8$ & $12$ & $0$ & $0$ & $0$ & $12$ & $12$\\
$4\,.\,4\,.\,72$ & $-116$ & $22$ & $6$ & $0$ & $1$ & $0$ & $3$ & $3$\\
$4\,.\,4\,.\,73$ & $68$ & $2$ & $2$ & $0$ & $1$ & $0$ & $1$ & $1$\\
$4\,.\,4\,.\,74$ & $-144$ & $10$ & $3$ & $1$ & $0$ & $0$ & $3$ & $3$\\
$4\,.\,4\,.\,76$ & $-24$ & $20$ & $2$ & $1$ & $0$ & $0$ & $2$ & $1$\\
$4\,.\,4\,.\,77$ & $192$ & $8$ & $24$ & $1$ & $0$ & $0$ & $24$ & $24$\\
$4\,.\,5\,.\,1$ & $-90$ & $36$ & $90$ & $0$ & $0$ & $0$ & $3$ & $1$\\
$4\,.\,5\,.\,2$ & $-106$ & $40$ & $46$ & $1$ & $0$ & $0$ & $2$ & $1$\\
$4\,.\,5\,.\,3$ & $-18$ & $72$ & $54$ & $1$ & $0$ & $0$ & $6$ & $1$\\
$4\,.\,5\,.\,4$ & $-88$ & $8$ & $80$ & $0$ & $0$ & $0$ & $2$ & $1$\\
$4\,.\,5\,.\,5$ & $-100$ & $4$ & $70$ & $1$ & $0$ & $0$ & $2$ & $1$\\
$4\,.\,5\,.\,6$ & $-32$ & $96$ & $96$ & $0$ & $0$ & $0$ & $8$ & $1$\\
$4\,.\,5\,.\,7$ & $-98$ & $12$ & $42$ & $0$ & $0$ & $0$ & $1$ & $1$\\
$4\,.\,5\,.\,8$ & $-2$ & $24$ & $6$ & $1$ & $0$ & $0$ & $2$ & $1$\\
$4\,.\,5\,.\,9$ & $544$ & $16$ & $16$ & $0$ & $0$ & $0$ & $8$ & $8$\\
$4\,.\,5\,.\,10$ & $-78$ & $92$ & $56$ & $0$ & $0$ & $0$ & $4$ & $1$\\
$4\,.\,5\,.\,11$ & $304$ & $32$ & $32$ & $0$ & $0$ & $0$ & $16$ & $1$\\
$4\,.\,5\,.\,12$ & $450$ & $42$ & $12$ & $0$ & $0$ & $0$ & $12$ & $6$\\
$4\,.\,5\,.\,13$ & $360$ & $20$ & $8$ & $0$ & $0$ & $0$ & $4$ & $4$\\
$4\,.\,5\,.\,14$ & $40$ & $40$ & $16$ & $0$ & $0$ & $0$ & $2$ & $1$\\
$4\,.\,5\,.\,15$ & $-32$ & $72$ & $12$ & $0$ & $1$ & $1$ & $1$ & $1$\\
$4\,.\,5\,.\,16$ & $-50$ & $16$ & $10$ & $0$ & $0$ & $0$ & $1$ & $1$\\
$4\,.\,5\,.\,17$ & $-8$ & $48$ & $24$ & $0$ & $0$ & $0$ & $4$ & $1$\\
$4\,.\,5\,.\,18$ & $-86$ & $22$ & $61$ & $1$ & $0$ & $0$ & $1$ & $1$\\
$4\,.\,5\,.\,19$ & $258$ & $46$ & $4$ & $0$ & $0$ & $0$ & $4$ & $2$\\
$4\,.\,5\,.\,20$ & $-84$ & $18$ & $57$ & $1$ & $0$ & $0$ & $1$ & $1$\\
$4\,.\,5\,.\,21$ & $60$ & $-387$ & $9$ & $0$ & $1$ & $1$ & $9$ & $1$\\
$4\,.\,5\,.\,22$ & $-102$ & $18$ & $21$ & $1$ & $0$ & $0$ & $1$ & $1$\\
$4\,.\,5\,.\,23$ & $-88$ & $4$ & $34$ & $0$ & $0$ & $0$ & $1$ & $1$\\
$4\,.\,5\,.\,24$ & $-100$ & $2$ & $29$ & $1$ & $0$ & $0$ & $1$ & $1$\\
$4\,.\,5\,.\,25$ & $-102$ & $177$ & $33$ & $0$ & $1$ & $0$ & $1$ & $1$\\
$4\,.\,5\,.\,26$ & $40$ & $4$ & $6$ & $0$ & $0$ & $0$ & $3$ & $3$\\
$4\,.\,5\,.\,27$ & $-92$ & $98$ & $38$ & $0$ & $1$ & $1$ & $1$ & $1$\\
$4\,.\,5\,.\,28$ & $-44$ & $16$ & $10$ & $1$ & $0$ & $0$ & $2$ & $2$\\
$4\,.\,5\,.\,29$ & $-100$ & $8$ & $14$ & $0$ & $0$ & $0$ & $1$ & $1$\\
$4\,.\,5\,.\,30$ & $-108$ & $14$ & $17$ & $1$ & $0$ & $0$ & $1$ & $1$\\
$4\,.\,5\,.\,31$ & $-92$ & $260$ & $56$ & $0$ & $1$ & $1$ & $1$ & $1$\\
$4\,.\,5\,.\,32$ & $-60$ & $0$ & $12$ & $0$ & $0$ & $0$ & $1$ & $1$\\
$4\,.\,5\,.\,33$ & $-52$ & $48$ & $18$ & $0$ & $0$ & $0$ & $3$ & $1$\\
$4\,.\,5\,.\,34$ & $-112$ & $84$ & $42$ & $1$ & $0$ & $0$ & $6$ & $1$\\
$4\,.\,5\,.\,35$ & $-100$ & $18$ & $21$ & $1$ & $0$ & $0$ & $1$ & $1$\\
$4\,.\,5\,.\,36$ & $-80$ & $54$ & $15$ & $1$ & $0$ & $0$ & $3$ & $1$\\
$4\,.\,5\,.\,37$ & $-32$ & $16$ & $10$ & $0$ & $0$ & $0$ & $1$ & $1$\\
$4\,.\,5\,.\,38$ & $-52$ & $0$ & $6$ & $0$ & $0$ & $0$ & $3$ & $3$\\
$4\,.\,5\,.\,39$ & $-72$ & $56$ & $20$ & $1$ & $0$ & $0$ & $4$ & $1$\\
$4\,.\,5\,.\,40$ & $-76$ & $0$ & $30$ & $0$ & $0$ & $0$ & $3$ & $1$\\
$4\,.\,5\,.\,41$ & $168$ & $48$ & $12$ & $0$ & $1$ & $1$ & $6$ & $1$\\
$4\,.\,5\,.\,42$ & $80$ & $48$ & $12$ & $0$ & $1$ & $1$ & $2$ & $1$\\
$4\,.\,5\,.\,43$ & $-88$ & $16$ & $28$ & $0$ & $0$ & $0$ & $2$ & $1$\\
$4\,.\,5\,.\,44$ & $-72$ & $20$ & $26$ & $1$ & $0$ & $0$ & $2$ & $1$\\
$4\,.\,5\,.\,45$ & $52$ & $6$ & $18$ & $0$ & $1$ & $1$ & $3$ & $1$\\
$4\,.\,5\,.\,46$ & $-98$ & $98$ & $14$ & $0$ & $1$ & $1$ & $1$ & $1$\\
$4\,.\,5\,.\,47$ & $-32$ & $320$ & $80$ & $0$ & $1$ & $0$ & $4$ & $1$\\
$4\,.\,5\,.\,48$ & $544$ & $64$ & $16$ & $0$ & $0$ & $0$ & $4$ & $1$\\
$4\,.\,5\,.\,49$ & $-92$ & $36$ & $48$ & $0$ & $0$ & $0$ & $2$ & $1$\\
$4\,.\,5\,.\,50$ & $192$ & $20$ & $20$ & $1$ & $0$ & $0$ & $4$ & $4$\\
$4\,.\,5\,.\,51$ & $-74$ & $14$ & $23$ & $1$ & $0$ & $0$ & $1$ & $1$\\
$4\,.\,5\,.\,52$ & $180$ & $2$ & $20$ & $1$ & $0$ & $0$ & $2$ & $2$\\
$4\,.\,5\,.\,53$ & $1040$ & $50$ & $5$ & $1$ & $0$ & $0$ & $5$ & $1$\\
$4\,.\,5\,.\,54$ & $-60$ & $70$ & $10$ & $0$ & $1$ & $1$ & $1$ & $1$\\
$4\,.\,5\,.\,55$ & $-64$ & $-76$ & $20$ & $0$ & $1$ & $0$ & $1$ & $1$\\
$4\,.\,5\,.\,56$ & $-16$ & $-104$ & $40$ & $0$ & $1$ & $0$ & $2$ & $1$\\
$4\,.\,5\,.\,57$ & $496$ & $-40$ & $8$ & $0$ & $1$ & $0$ & $2$ & $1$\\
$4\,.\,5\,.\,58$ & $18$ & $189$ & $45$ & $0$ & $1$ & $0$ & $3$ & $1$\\
$4\,.\,5\,.\,59$ & $558$ & $54$ & $9$ & $1$ & $0$ & $1$ & $3$ & $1$\\
$4\,.\,5\,.\,60$ & $54$ & $29$ & $5$ & $0$ & $1$ & $0$ & $1$ & $1$\\
$4\,.\,5\,.\,61$ & $426$ & $22$ & $1$ & $1$ & $0$ & $1$ & $1$ & $1$\\
$4\,.\,5\,.\,62$ & $48$ & $-52$ & $20$ & $0$ & $1$ & $0$ & $2$ & $1$\\
$4\,.\,5\,.\,63$ & $528$ & $40$ & $4$ & $0$ & $0$ & $0$ & $2$ & $1$\\
$4\,.\,5\,.\,64$ & $236$ & $-43$ & $5$ & $0$ & $1$ & $1$ & $1$ & $1$\\
$4\,.\,5\,.\,65$ & $252$ & $-9$ & $15$ & $0$ & $1$ & $1$ & $3$ & $3$\\
$4\,.\,5\,.\,66$ & $-84$ & $6$ & $15$ & $1$ & $0$ & $0$ & $3$ & $3$\\
$4\,.\,5\,.\,67$ & $272$ & $22$ & $10$ & $0$ & $0$ & $0$ & $2$ & $2$\\
$4\,.\,5\,.\,68$ & $-126$ & $6$ & $51$ & $1$ & $0$ & $0$ & $3$ & $3$\\
$4\,.\,5\,.\,69$ & $810$ & $9$ & $9$ & $0$ & $1$ & $1$ & $9$ & $9$\\
$4\,.\,5\,.\,70$ & $-104$ & $18$ & $21$ & $1$ & $0$ & $0$ & $1$ & $1$\\
$4\,.\,5\,.\,71$ & $234$ & $54$ & $18$ & $0$ & $1$ & $0$ & $3$ & $1$\\
$4\,.\,5\,.\,72$ & $34$ & $14$ & $3$ & $1$ & $0$ & $1$ & $3$ & $3$\\
$4\,.\,5\,.\,73$ & $-72$ & $4$ & $16$ & $0$ & $0$ & $0$ & $1$ & $1$\\
$4\,.\,5\,.\,74$ & $-72$ & $44$ & $14$ & $1$ & $0$ & $0$ & $2$ & $1$\\
$4\,.\,5\,.\,75$ & $-92$ & $12$ & $18$ & $0$ & $0$ & $0$ & $1$ & $1$\\
$4\,.\,5\,.\,76$ & $-90$ & $12$ & $18$ & $0$ & $0$ & $0$ & $1$ & $1$\\
$4\,.\,5\,.\,77$ & $-108$ & $6$ & $33$ & $1$ & $0$ & $0$ & $3$ & $1$\\
$4\,.\,5\,.\,78$ & $-104$ & $2$ & $29$ & $1$ & $0$ & $0$ & $1$ & $1$\\
$4\,.\,5\,.\,79$ & $-86$ & $16$ & $22$ & $0$ & $0$ & $0$ & $1$ & $1$\\
$4\,.\,5\,.\,80$ & $-144$ & $2$ & $26$ & $0$ & $0$ & $0$ & $2$ & $2$\\
$4\,.\,5\,.\,81$ & $-108$ & $6$ & $42$ & $0$ & $0$ & $0$ & $2$ & $2$\\
$4\,.\,5\,.\,82$ & $-96$ & $8$ & $14$ & $0$ & $0$ & $0$ & $1$ & $1$\\
$4\,.\,5\,.\,83$ & $-88$ & $2$ & $11$ & $1$ & $0$ & $0$ & $1$ & $1$\\
$4\,.\,5\,.\,84$ & $-86$ & $0$ & $30$ & $0$ & $0$ & $0$ & $1$ & $1$\\
$4\,.\,5\,.\,85$ & $66$ & $12$ & $6$ & $0$ & $0$ & $1$ & $3$ & $3$\\
$4\,.\,5\,.\,86$ & $-92$ & $2$ & $11$ & $1$ & $0$ & $0$ & $1$ & $1$\\
$4\,.\,5\,.\,87$ & $-128$ & $18$ & $18$ & $0$ & $0$ & $0$ & $2$ & $2$\\
$4\,.\,5\,.\,88$ & $-92$ & $16$ & $22$ & $0$ & $0$ & $0$ & $1$ & $1$\\
$4\,.\,5\,.\,89$ & $-44$ & $4$ & $16$ & $0$ & $0$ & $0$ & $1$ & $1$\\
$4\,.\,5\,.\,90$ & $320$ & $8$ & $8$ & $0$ & $0$ & $0$ & $2$ & $2$\\
$4\,.\,5\,.\,91$ & $68$ & $40$ & $16$ & $0$ & $0$ & $0$ & $4$ & $1$\\
$4\,.\,5\,.\,92$ & $-100$ & $60$ & $24$ & $0$ & $1$ & $0$ & $4$ & $2$\\
$4\,.\,5\,.\,93$ & $-100$ & $16$ & $12$ & $0$ & $0$ & $0$ & $3$ & $3$\\
$4\,.\,5\,.\,94$ & $-132$ & $47$ & $27$ & $0$ & $1$ & $0$ & $3$ & $3$\\
$4\,.\,5\,.\,95$ & $-74$ & $372$ & $12$ & $0$ & $1$ & $0$ & $1$ & $1$\\
$4\,.\,5\,.\,96$ & $-74$ & $622$ & $16$ & $0$ & $1$ & $0$ & $2$ & $2$\\
$4\,.\,5\,.\,97$ & $52$ & $12$ & $12$ & $0$ & $0$ & $0$ & $1$ & $1$\\
$4\,.\,5\,.\,98$ & $-100$ & $20$ & $26$ & $1$ & $0$ & $0$ & $2$ & $1$\\
$4\,.\,5\,.\,99$ & $-136$ & $14$ & $15$ & $1$ & $0$ & $0$ & $3$ & $3$\\
$4\,.\,5\,.\,100$ & $-16$ & $14$ & $6$ & $0$ & $0$ & $0$ & $6$ & $6$\\
$4\,.\,5\,.\,101$ & $-114$ & $20$ & $8$ & $1$ & $0$ & $1$ & $2$ & $2$\\
$4\,.\,5\,.\,102$ & $-108$ & $60$ & $48$ & $0$ & $1$ & $1$ & $6$ & $3$\\
$4\,.\,5\,.\,103$ & $-132$ & $281$ & $5$ & $0$ & $1$ & $0$ & $5$ & $5$\\
$4\,.\,5\,.\,104$ & $-128$ & $16$ & $52$ & $1$ & $0$ & $0$ & $4$ & $4$\\
$4\,.\,5\,.\,105$ & $376$ & $-16$ & $8$ & $0$ & $1$ & $1$ & $2$ & $2$\\
$4\,.\,5\,.\,106$ & $220$ & $53$ & $17$ & $0$ & $1$ & $0$ & $1$ & $1$\\
$4\,.\,5\,.\,107$ & $-176$ & $8$ & $20$ & $1$ & $0$ & $0$ & $4$ & $4$\\
$4\,.\,5\,.\,108$ & $-36$ & $14$ & $8$ & $0$ & $0$ & $0$ & $8$ & $8$\\
$4\,.\,5\,.\,109$ & $-72$ & $0$ & $12$ & $0$ & $0$ & $0$ & $1$ & $1$\\
$4\,.\,5\,.\,110$ & $-64$ & $12$ & $24$ & $0$ & $0$ & $0$ & $1$ & $1$\\
$4\,.\,5\,.\,111$ & $-60$ & $12$ & $24$ & $0$ & $0$ & $0$ & $1$ & $1$\\
$4\,.\,5\,.\,112$ & $-36$ & $0$ & $24$ & $0$ & $0$ & $0$ & $2$ & $1$\\
$4\,.\,5\,.\,113$ & $-116$ & $32$ & $20$ & $1$ & $0$ & $0$ & $4$ & $2$\\
$4\,.\,5\,.\,114$ & $768$ & $8$ & $8$ & $0$ & $0$ & $0$ & $2$ & $2$\\
$4\,.\,5\,.\,115$ & $792$ & $3$ & $3$ & $0$ & $1$ & $0$ & $3$ & $3$\\
$4\,.\,5\,.\,116$ & $752$ & $52$ & $4$ & $0$ & $1$ & $0$ & $1$ & $1$\\
$4\,.\,5\,.\,117$ & $608$ & $-47$ & $1$ & $0$ & $1$ & $0$ & $1$ & $1$\\
$4\,.\,5\,.\,118$ & $768$ & $8$ & $8$ & $0$ & $0$ & $0$ & $2$ & $2$\\
$4\,.\,5\,.\,119$ & $-20$ & $40$ & $10$ & $0$ & $0$ & $0$ & $5$ & $1$\\
$4\,.\,5\,.\,120$ & $-36$ & $8$ & $8$ & $0$ & $0$ & $0$ & $1$ & $1$\\
$4\,.\,5\,.\,121$ & $20$ & $8$ & $8$ & $0$ & $0$ & $0$ & $1$ & $1$\\
$4\,.\,5\,.\,122$ & $-12$ & $12$ & $6$ & $0$ & $0$ & $0$ & $3$ & $3$\\
$4\,.\,5\,.\,123$ & $96$ & $64$ & $16$ & $0$ & $1$ & $0$ & $1$ & $1$\\
$4\,.\,5\,.\,124$ & $-60$ & $12$ & $6$ & $0$ & $0$ & $0$ & $1$ & $1$\\
$4\,.\,5\,.\,125$ & $-68$ & $12$ & $6$ & $0$ & $0$ & $0$ & $1$ & $1$\\
$4\,.\,5\,.\,126$ & $-96$ & $20$ & $8$ & $0$ & $0$ & $0$ & $1$ & $1$\\
$4\,.\,5\,.\,127$ & $-114$ & $4$ & $10$ & $0$ & $0$ & $0$ & $1$ & $1$\\
$4\,.\,5\,.\,128$ & $-8$ & $41$ & $5$ & $0$ & $1$ & $1$ & $5$ & $5$\\
$4\,.\,5\,.\,129$ & $68$ & $46$ & $10$ & $0$ & $1$ & $1$ & $5$ & $5$\\
$4\,.\,5\,.\,130$ & $52$ & $16$ & $6$ & $0$ & $0$ & $0$ & $3$ & $3$\\
$4\,.\,5\,.\,131$ & $300$ & $12$ & $6$ & $0$ & $0$ & $0$ & $3$ & $3$\\
$4\,.\,5\,.\,132$ & $192$ & $0$ & $12$ & $0$ & $0$ & $0$ & $1$ & $1$\\
$4\,.\,5\,.\,133$ & $416$ & $16$ & $16$ & $0$ & $0$ & $0$ & $4$ & $2$\\
$4\,.\,5\,.\,134$ & $56$ & $36$ & $12$ & $0$ & $0$ & $0$ & $3$ & $1$\\
$4\,.\,6\,.\,3$ & $-44$ & $20$ & $14$ & $0$ & $0$ & $0$ & $1$ & $1$\\
$4\,.\,6\,.\,4$ & $-76$ & $10$ & $19$ & $1$ & $0$ & $0$ & $1$ & $1$\\
$4\,.\,6\,.\,5$ & $-212$ & $0$ & $18$ & $1$ & $0$ & $0$ & $2$ & $2$\\
$4\,.\,6\,.\,6$ & $-158$ & $0$ & $18$ & $0$ & $0$ & $0$ & $1$ & $1$\\
$4\,.\,6\,.\,7$ & $-312$ & $0$ & $18$ & $0$ & $0$ & $0$ & $3$ & $3$\\
$4\,.\,6\,.\,8$ & $-50$ & $22$ & $13$ & $1$ & $0$ & $0$ & $1$ & $1$\\
$4\,.\,6\,.\,9$ & $-88$ & $36$ & $162$ & $1$ & $0$ & $0$ & $2$ & $1$\\
$4\,.\,6\,.\,10$ & $-160$ & $0$ & $18$ & $0$ & $0$ & $0$ & $1$ & $1$\\
$4\,.\,6\,.\,11$ & $528$ & $16$ & $4$ & $0$ & $0$ & $0$ & $1$ & $1$\\
$4\,.\,6\,.\,12$ & $0$ & $60$ & $30$ & $0$ & $0$ & $0$ & $5$ & $1$\\
$4\,.\,6\,.\,14$ & $-190$ & $12$ & $24$ & $0$ & $0$ & $0$ & $1$ & $1$\\
$4\,.\,6\,.\,15$ & $832$ & $16$ & $16$ & $0$ & $0$ & $0$ & $4$ & $4$\\
$4\,.\,6\,.\,16$ & $992$ & $64$ & $16$ & $0$ & $0$ & $0$ & $4$ & $1$\\
$4\,.\,6\,.\,17$ & $48$ & $16$ & $60$ & $0$ & $0$ & $0$ & $3$ & $3$\\
$4\,.\,6\,.\,18$ & $0$ & $96$ & $96$ & $0$ & $0$ & $0$ & $24$ & $1$\\
$4\,.\,6\,.\,19$ & $-80$ & $140$ & $230$ & $1$ & $0$ & $0$ & $10$ & $1$\\
$4\,.\,6\,.\,20$ & $-12$ & $64$ & $84$ & $1$ & $0$ & $0$ & $12$ & $3$\\
$4\,.\,6\,.\,21$ & $-36$ & $-158340$ & $60$ & $0$ & $1$ & $0$ & $6$ & $1$\\
$4\,.\,6\,.\,22$ & $-76$ & $12$ & $96$ & $0$ & $0$ & $0$ & $1$ & $1$\\
$4\,.\,6\,.\,23$ & $-80$ & $20$ & $116$ & $0$ & $0$ & $0$ & $1$ & $1$\\
$4\,.\,6\,.\,24$ & $-72$ & $144$ & $216$ & $0$ & $0$ & $0$ & $12$ & $1$\\
$4\,.\,6\,.\,25$ & $-72$ & $465$ & $117$ & $0$ & $1$ & $1$ & $1$ & $1$\\
$4\,.\,6\,.\,26$ & $-80$ & $8$ & $86$ & $0$ & $0$ & $0$ & $1$ & $1$\\
$4\,.\,6\,.\,27$ & $-84$ & $48$ & $204$ & $1$ & $0$ & $0$ & $4$ & $1$\\
$4\,.\,6\,.\,28$ & $-72$ & $18$ & $27$ & $1$ & $0$ & $0$ & $1$ & $1$\\
$4\,.\,6\,.\,30$ & $-32$ & $8$ & $32$ & $0$ & $0$ & $0$ & $1$ & $1$\\
$4\,.\,6\,.\,31$ & $-50$ & $120$ & $150$ & $1$ & $0$ & $0$ & $10$ & $1$\\
$4\,.\,6\,.\,33$ & $-352$ & $64$ & $16$ & $0$ & $0$ & $0$ & $4$ & $1$\\
$4\,.\,6\,.\,34$ & $-32$ & $32$ & $48$ & $0$ & $1$ & $0$ & $12$ & $3$\\
$4\,.\,6\,.\,35$ & $270$ & $0$ & $12$ & $0$ & $1$ & $0$ & $3$ & $3$\\
$4\,.\,6\,.\,36$ & $-64$ & $32$ & $56$ & $0$ & $0$ & $0$ & $2$ & $1$\\
$4\,.\,6\,.\,37$ & $1168$ & $20$ & $8$ & $0$ & $0$ & $0$ & $8$ & $8$\\
$4\,.\,6\,.\,38$ & $528$ & $4$ & $10$ & $0$ & $0$ & $0$ & $10$ & $10$\\
$4\,.\,6\,.\,39$ & $-324$ & $228$ & $24$ & $0$ & $1$ & $1$ & $2$ & $2$\\
$4\,.\,6\,.\,40$ & $2664$ & $12$ & $6$ & $0$ & $0$ & $0$ & $3$ & $3$\\
$4\,.\,6\,.\,41$ & $-84$ & $20$ & $26$ & $0$ & $0$ & $0$ & $1$ & $1$\\
$4\,.\,7\,.\,1$ & $-64$ & $32$ & $56$ & $0$ & $0$ & $0$ & $2$ & $1$\\
$4\,.\,7\,.\,2$ & $1168$ & $20$ & $8$ & $1$ & $0$ & $0$ & $8$ & $8$\\
$4\,.\,7\,.\,3$ & $0$ & $96$ & $96$ & $0$ & $0$ & $0$ & $24$ & $1$\\
$4\,.\,7\,.\,4$ & $-58$ & $6$ & $6$ & $0$ & $1$ & $0$ & $1$ & $1$\\
$4\,.\,7\,.\,5$ & $68$ & $62$ & $2$ & $0$ & $1$ & $0$ & $1$ & $1$\\
$4\,.\,7\,.\,6$ & $-72$ & $144$ & $216$ & $0$ & $0$ & $0$ & $12$ & $1$\\
$4\,.\,7\,.\,7$ & $-50$ & $120$ & $150$ & $1$ & $0$ & $0$ & $10$ & $1$\\
$4\,.\,7\,.\,8$ & $-72$ & $-4104$ & $216$ & $0$ & $1$ & $0$ & $12$ & $1$\\
$4\,.\,7\,.\,9$ & $-18$ & $90$ & $54$ & $0$ & $1$ & $1$ & $6$ & $1$\\
$4\,.\,7\,.\,10$ & $-36$ & $-18660$ & $60$ & $0$ & $1$ & $0$ & $6$ & $1$\\
$4\,.\,7\,.\,11$ & $-324$ & $948$ & $24$ & $0$ & $1$ & $0$ & $2$ & $2$\\
$4\,.\,7\,.\,12$ & $270$ & $12$ & $12$ & $0$ & $0$ & $1$ & $3$ & $3$\\
$4\,.\,7\,.\,13$ & $320$ & $16$ & $16$ & $0$ & $0$ & $0$ & $2$ & $1$\\
$4\,.\,7\,.\,14$ & $290$ & $4$ & $4$ & $0$ & $0$ & $1$ & $1$ & $1$\\
$4\,.\,7\,.\,15$ & $528$ & $22$ & $10$ & $1$ & $0$ & $1$ & $10$ & $10$\\
$4\,.\,7\,.\,16$ & $16$ & $0$ & $60$ & $0$ & $0$ & $0$ & $3$ & $1$\\
$4\,.\,7\,.\,17$ & $234$ & $54$ & $18$ & $0$ & $1$ & $0$ & $9$ & $9$\\
$4\,.\,7\,.\,18$ & $-352$ & $64$ & $16$ & $0$ & $0$ & $0$ & $4$ & $1$\\
$4\,.\,7\,.\,19$ & $-32$ & $32$ & $48$ & $0$ & $1$ & $0$ & $12$ & $3$\\
$4\,.\,7\,.\,20$ & $-18$ & $6$ & $27$ & $1$ & $0$ & $0$ & $1$ & $1$\\
$4\,.\,7\,.\,21$ & $738$ & $6$ & $9$ & $1$ & $0$ & $1$ & $3$ & $3$\\
$4\,.\,8\,.\,1$ & $-90$ & $24$ & $30$ & $1$ & $0$ & $0$ & $2$ & $1$\\
$4\,.\,8\,.\,2$ & $-60$ & $72$ & $36$ & $0$ & $0$ & $0$ & $6$ & $1$\\
$4\,.\,8\,.\,3$ & $12$ & $36$ & $24$ & $0$ & $0$ & $0$ & $4$ & $2$\\
$4\,.\,8\,.\,4$ & $-18$ & $30$ & $18$ & $0$ & $1$ & $1$ & $6$ & $3$\\
$4\,.\,8\,.\,5$ & $96$ & $36$ & $36$ & $0$ & $1$ & $0$ & $6$ & $1$\\
$4\,.\,8\,.\,6$ & $-80$ & $0$ & $30$ & $0$ & $0$ & $0$ & $3$ & $1$\\
$4\,.\,8\,.\,7$ & $-30$ & $38$ & $20$ & $0$ & $0$ & $0$ & $4$ & $2$\\
$4\,.\,8\,.\,8$ & $-60$ & $31$ & $15$ & $0$ & $1$ & $1$ & $3$ & $3$\\
$4\,.\,8\,.\,9$ & $50$ & $-102$ & $30$ & $0$ & $1$ & $0$ & $6$ & $1$\\
$4\,.\,8\,.\,10$ & $28$ & $36$ & $24$ & $0$ & $0$ & $0$ & $12$ & $2$\\
$4\,.\,8\,.\,11$ & $0$ & $30$ & $18$ & $0$ & $1$ & $1$ & $3$ & $3$\\
$4\,.\,8\,.\,12$ & $108$ & $468$ & $36$ & $0$ & $1$ & $0$ & $6$ & $1$\\
$4\,.\,8\,.\,13$ & $138$ & $38$ & $12$ & $1$ & $0$ & $1$ & $12$ & $6$\\
$4\,.\,8\,.\,14$ & $236$ & $12$ & $24$ & $0$ & $0$ & $0$ & $12$ & $2$\\
$4\,.\,8\,.\,15$ & $234$ & $0$ & $18$ & $1$ & $0$ & $1$ & $6$ & $3$\\
$4\,.\,8\,.\,16$ & $-90$ & $14$ & $47$ & $1$ & $0$ & $0$ & $1$ & $1$\\
$4\,.\,8\,.\,17$ & $14$ & $14$ & $6$ & $0$ & $1$ & $1$ & $3$ & $3$\\
$4\,.\,8\,.\,18$ & $-88$ & $40$ & $52$ & $0$ & $0$ & $0$ & $2$ & $1$\\
$4\,.\,8\,.\,19$ & $80$ & $14$ & $8$ & $0$ & $0$ & $0$ & $4$ & $4$\\
$4\,.\,8\,.\,20$ & $-90$ & $4$ & $34$ & $0$ & $0$ & $0$ & $1$ & $1$\\
$4\,.\,8\,.\,21$ & $-6$ & $168$ & $36$ & $0$ & $1$ & $1$ & $12$ & $1$\\
$4\,.\,8\,.\,22$ & $-90$ & $8$ & $38$ & $0$ & $0$ & $0$ & $1$ & $1$\\
$4\,.\,8\,.\,23$ & $-90$ & $83$ & $11$ & $0$ & $1$ & $0$ & $1$ & $1$\\
$4\,.\,8\,.\,24$ & $-86$ & $18$ & $24$ & $1$ & $0$ & $0$ & $2$ & $2$\\
$4\,.\,8\,.\,25$ & $384$ & $60$ & $18$ & $0$ & $0$ & $0$ & $6$ & $2$\\
$4\,.\,8\,.\,26$ & $-84$ & $16$ & $22$ & $0$ & $0$ & $0$ & $1$ & $1$\\
$4\,.\,8\,.\,27$ & $12$ & $22$ & $10$ & $0$ & $0$ & $0$ & $2$ & $2$\\
$4\,.\,8\,.\,28$ & $-86$ & $20$ & $26$ & $0$ & $0$ & $0$ & $1$ & $1$\\
$4\,.\,8\,.\,29$ & $-74$ & $116$ & $32$ & $0$ & $1$ & $1$ & $2$ & $1$\\
$4\,.\,8\,.\,30$ & $-68$ & $0$ & $12$ & $0$ & $0$ & $0$ & $1$ & $1$\\
$4\,.\,8\,.\,31$ & $-76$ & $6$ & $15$ & $1$ & $0$ & $0$ & $1$ & $1$\\
$4\,.\,8\,.\,32$ & $36$ & $60$ & $36$ & $0$ & $0$ & $0$ & $3$ & $1$\\
$4\,.\,8\,.\,33$ & $-110$ & $0$ & $30$ & $1$ & $0$ & $0$ & $2$ & $2$\\
$4\,.\,8\,.\,34$ & $-80$ & $6$ & $15$ & $1$ & $0$ & $0$ & $1$ & $1$\\
$4\,.\,8\,.\,35$ & $-72$ & $18$ & $117$ & $1$ & $0$ & $0$ & $1$ & $1$\\
$4\,.\,8\,.\,36$ & $-80$ & $20$ & $116$ & $0$ & $0$ & $0$ & $1$ & $1$\\
$4\,.\,8\,.\,37$ & $-116$ & $22$ & $33$ & $1$ & $0$ & $0$ & $3$ & $3$\\
$4\,.\,8\,.\,38$ & $-20$ & $8$ & $8$ & $0$ & $0$ & $0$ & $1$ & $1$\\
$4\,.\,8\,.\,39$ & $-90$ & $4$ & $30$ & $0$ & $0$ & $0$ & $3$ & $3$\\
$4\,.\,8\,.\,40$ & $-76$ & $12$ & $96$ & $0$ & $0$ & $0$ & $1$ & $1$\\
$4\,.\,8\,.\,41$ & $-50$ & $12$ & $24$ & $0$ & $0$ & $0$ & $1$ & $1$\\
$4\,.\,8\,.\,42$ & $-8$ & $-40$ & $32$ & $0$ & $1$ & $1$ & $4$ & $2$\\
$4\,.\,8\,.\,43$ & $-60$ & $16$ & $28$ & $0$ & $0$ & $0$ & $1$ & $1$\\
$4\,.\,8\,.\,44$ & $360$ & $-20$ & $16$ & $0$ & $1$ & $0$ & $8$ & $2$\\
$4\,.\,8\,.\,45$ & $-60$ & $1516$ & $40$ & $0$ & $1$ & $1$ & $2$ & $2$\\
$4\,.\,8\,.\,46$ & $6$ & $8$ & $8$ & $0$ & $0$ & $0$ & $1$ & $1$\\
$4\,.\,8\,.\,47$ & $-50$ & $0$ & $18$ & $1$ & $0$ & $0$ & $2$ & $1$\\
$4\,.\,8\,.\,48$ & $-64$ & $22$ & $13$ & $1$ & $0$ & $0$ & $1$ & $1$\\
$4\,.\,8\,.\,49$ & $-50$ & $6$ & $12$ & $0$ & $0$ & $0$ & $2$ & $2$\\
$4\,.\,8\,.\,50$ & $-80$ & $140$ & $230$ & $1$ & $0$ & $0$ & $10$ & $1$\\
$4\,.\,8\,.\,51$ & $-80$ & $8$ & $86$ & $0$ & $0$ & $0$ & $1$ & $1$\\
$4\,.\,8\,.\,52$ & $544$ & $16$ & $16$ & $0$ & $1$ & $0$ & $4$ & $1$\\
$4\,.\,8\,.\,53$ & $304$ & $16$ & $16$ & $0$ & $0$ & $0$ & $2$ & $2$\\
$4\,.\,8\,.\,54$ & $-12$ & $46396$ & $84$ & $0$ & $1$ & $0$ & $12$ & $3$\\
$4\,.\,8\,.\,55$ & $192$ & $32$ & $96$ & $0$ & $0$ & $0$ & $12$ & $6$\\
$4\,.\,8\,.\,56$ & $-22$ & $8$ & $8$ & $0$ & $0$ & $0$ & $1$ & $1$\\
$4\,.\,8\,.\,57$ & $-58$ & $196$ & $40$ & $0$ & $1$ & $1$ & $1$ & $1$\\
$4\,.\,8\,.\,58$ & $-62$ & $12$ & $6$ & $0$ & $0$ & $0$ & $1$ & $1$\\
$4\,.\,8\,.\,59$ & $-6$ & $0$ & $30$ & $0$ & $0$ & $1$ & $3$ & $3$\\
$4\,.\,8\,.\,60$ & $-66$ & $12$ & $6$ & $0$ & $0$ & $0$ & $1$ & $1$\\
$4\,.\,8\,.\,61$ & $-12$ & $0$ & $30$ & $1$ & $0$ & $0$ & $2$ & $2$\\
$4\,.\,8\,.\,62$ & $222$ & $12$ & $6$ & $0$ & $0$ & $1$ & $3$ & $3$\\
$4\,.\,8\,.\,63$ & $-68$ & $18$ & $30$ & $0$ & $0$ & $0$ & $2$ & $2$\\
$4\,.\,8\,.\,64$ & $96$ & $16$ & $16$ & $0$ & $0$ & $1$ & $2$ & $1$\\
$4\,.\,8\,.\,65$ & $-72$ & $196$ & $40$ & $0$ & $1$ & $1$ & $1$ & $1$\\
$4\,.\,8\,.\,66$ & $24$ & $24$ & $12$ & $0$ & $0$ & $0$ & $2$ & $1$\\
$4\,.\,8\,.\,67$ & $-84$ & $162$ & $30$ & $0$ & $1$ & $1$ & $1$ & $1$\\
$4\,.\,8\,.\,68$ & $-84$ & $48$ & $204$ & $1$ & $0$ & $0$ & $4$ & $1$\\
$4\,.\,8\,.\,69$ & $-36$ & $88$ & $64$ & $0$ & $0$ & $0$ & $4$ & $1$\\
$4\,.\,8\,.\,70$ & $12$ & $40$ & $16$ & $0$ & $0$ & $0$ & $2$ & $1$\\
$4\,.\,8\,.\,71$ & $450$ & $24$ & $18$ & $1$ & $0$ & $0$ & $6$ & $3$\\
$4\,.\,8\,.\,72$ & $-72$ & $18$ & $27$ & $1$ & $0$ & $0$ & $1$ & $1$\\
$4\,.\,8\,.\,73$ & $-84$ & $20$ & $26$ & $0$ & $0$ & $0$ & $1$ & $1$\\
$4\,.\,8\,.\,74$ & $-32$ & $8$ & $32$ & $0$ & $0$ & $0$ & $1$ & $1$\\
$4\,.\,8\,.\,75$ & $440$ & $0$ & $12$ & $0$ & $0$ & $0$ & $2$ & $2$\\
$4\,.\,8\,.\,76$ & $362$ & $24$ & $6$ & $1$ & $0$ & $0$ & $2$ & $1$\\
$4\,.\,8\,.\,77$ & $-58$ & $8$ & $20$ & $0$ & $0$ & $0$ & $2$ & $1$\\
$4\,.\,8\,.\,78$ & $216$ & $10$ & $6$ & $0$ & $0$ & $0$ & $6$ & $6$\\
$4\,.\,8\,.\,79$ & $16$ & $0$ & $60$ & $0$ & $0$ & $0$ & $3$ & $1$\\
$4\,.\,8\,.\,80$ & $416$ & $0$ & $24$ & $0$ & $0$ & $0$ & $4$ & $4$\\
$4\,.\,8\,.\,81$ & $208$ & $64$ & $16$ & $0$ & $1$ & $0$ & $2$ & $1$\\
$4\,.\,8\,.\,82$ & $-12$ & $64$ & $84$ & $1$ & $0$ & $0$ & $12$ & $3$\\
$4\,.\,8\,.\,83$ & $180$ & $10$ & $4$ & $0$ & $0$ & $0$ & $4$ & $4$\\
$4\,.\,8\,.\,84$ & $-84$ & $0$ & $30$ & $0$ & $0$ & $0$ & $3$ & $3$\\
$4\,.\,8\,.\,85$ & $136$ & $24$ & $12$ & $0$ & $0$ & $0$ & $2$ & $1$\\
$4\,.\,8\,.\,86$ & $-78$ & $2$ & $35$ & $1$ & $0$ & $0$ & $1$ & $1$\\
$4\,.\,8\,.\,88$ & $-76$ & $6$ & $15$ & $1$ & $0$ & $0$ & $1$ & $1$\\
$4\,.\,9\,.\,1$ & $-76$ & $20$ & $42$ & $1$ & $0$ & $0$ & $6$ & $3$\\
$4\,.\,9\,.\,2$ & $-56$ & $96$ & $84$ & $0$ & $0$ & $0$ & $6$ & $1$\\
$4\,.\,9\,.\,3$ & $-18$ & $6$ & $27$ & $1$ & $0$ & $0$ & $1$ & $1$\\
$4\,.\,9\,.\,4$ & $738$ & $-15$ & $9$ & $0$ & $1$ & $1$ & $3$ & $3$\\
$4\,.\,9\,.\,5$ & $-68$ & $24$ & $36$ & $0$ & $0$ & $0$ & $2$ & $1$\\
$4\,.\,9\,.\,6$ & $-74$ & $12$ & $102$ & $0$ & $0$ & $0$ & $1$ & $1$\\
$4\,.\,9\,.\,7$ & $468$ & $0$ & $18$ & $0$ & $0$ & $0$ & $9$ & $9$\\
$4\,.\,9\,.\,8$ & $-20$ & $16$ & $34$ & $0$ & $0$ & $0$ & $1$ & $1$\\
$4\,.\,9\,.\,9$ & $-74$ & $22$ & $31$ & $1$ & $0$ & $0$ & $1$ & $1$\\
$4\,.\,9\,.\,10$ & $-68$ & $22$ & $31$ & $1$ & $0$ & $0$ & $1$ & $1$\\
$4\,.\,10\,.\,1$ & $-78$ & $10$ & $91$ & $1$ & $0$ & $0$ & $1$ & $1$\\
$4\,.\,10\,.\,2$ & $276$ & $196$ & $24$ & $0$ & $1$ & $0$ & $6$ & $6$\\
$4\,.\,10\,.\,3$ & $132$ & $0$ & $18$ & $1$ & $0$ & $0$ & $6$ & $6$\\
$4\,.\,10\,.\,4$ & $300$ & $0$ & $18$ & $1$ & $0$ & $0$ & $6$ & $6$\\
$4\,.\,10\,.\,5$ & $-6$ & $462$ & $18$ & $0$ & $1$ & $1$ & $3$ & $3$\\
$4\,.\,10\,.\,6$ & $96$ & $28$ & $24$ & $0$ & $1$ & $1$ & $3$ & $3$\\
$4\,.\,10\,.\,7$ & $54$ & $1372$ & $24$ & $0$ & $1$ & $1$ & $3$ & $3$\\
$4\,.\,10\,.\,8$ & $-12$ & $30$ & $18$ & $0$ & $1$ & $1$ & $3$ & $3$\\
$4\,.\,10\,.\,9$ & $-120$ & $16$ & $64$ & $0$ & $0$ & $0$ & $1$ & $1$\\
$4\,.\,10\,.\,10$ & $416$ & $80$ & $48$ & $0$ & $0$ & $0$ & $12$ & $3$\\
$4\,.\,11\,.\,1$ & $-82$ & $4$ & $18$ & $0$ & $0$ & $1$ & $3$ & $3$\\
$4\,.\,11\,.\,2$ & $-64$ & $10$ & $97$ & $1$ & $0$ & $0$ & $1$ & $1$\\
$4\,.\,11\,.\,3$ & $-78$ & $10$ & $91$ & $1$ & $0$ & $0$ & $1$ & $1$\\
$4\,.\,11\,.\,4$ & $-50$ & $8$ & $20$ & $0$ & $0$ & $0$ & $1$ & $1$\\
$4\,.\,11\,.\,5$ & $40$ & $308$ & $8$ & $0$ & $1$ & $1$ & $2$ & $2$\\
$4\,.\,11\,.\,6$ & $-74$ & $12$ & $102$ & $0$ & $0$ & $0$ & $1$ & $1$\\
$4\,.\,11\,.\,7$ & $468$ & $0$ & $18$ & $0$ & $0$ & $0$ & $9$ & $9$\\
$4\,.\,11\,.\,8$ & $-20$ & $16$ & $34$ & $0$ & $0$ & $0$ & $1$ & $1$\\
$4\,.\,11\,.\,9$ & $-68$ & $24$ & $36$ & $0$ & $0$ & $0$ & $2$ & $1$\\
$4\,.\,11\,.\,10$ & $-68$ & $22$ & $31$ & $1$ & $0$ & $0$ & $1$ & $1$\\
$4\,.\,11\,.\,11$ & $-74$ & $22$ & $31$ & $1$ & $0$ & $0$ & $1$ & $1$\\
$4\,.\,11\,.\,12$ & $-78$ & $2$ & $35$ & $1$ & $0$ & $0$ & $1$ & $1$\\
$4\,.\,11\,.\,13$ & $-82$ & $6$ & $39$ & $1$ & $0$ & $0$ & $3$ & $1$\\
$4\,.\,11\,.\,14$ & $-74$ & $18$ & $27$ & $1$ & $0$ & $0$ & $1$ & $1$\\
$4\,.\,11\,.\,15$ & $190$ & $14$ & $3$ & $1$ & $0$ & $1$ & $3$ & $3$\\
$4\,.\,11\,.\,16$ & $-80$ & $2$ & $35$ & $1$ & $0$ & $0$ & $1$ & $1$\\
$4\,.\,11\,.\,17$ & $144$ & $30$ & $6$ & $0$ & $1$ & $0$ & $3$ & $3$\\
$4\,.\,11\,.\,18$ & $-82$ & $26$ & $26$ & $0$ & $1$ & $0$ & $1$ & $1$\\
$4\,.\,11\,.\,19$ & $-42$ & $24$ & $12$ & $0$ & $1$ & $0$ & $3$ & $3$\\
$4\,.\,11\,.\,20$ & $-72$ & $20$ & $32$ & $0$ & $0$ & $0$ & $2$ & $1$\\
$4\,.\,11\,.\,21$ & $600$ & $56$ & $8$ & $1$ & $0$ & $0$ & $8$ & $2$\\
$4\,.\,12\,.\,2$ & $528$ & $16$ & $16$ & $0$ & $0$ & $0$ & $2$ & $2$\\
$4\,.\,12\,.\,3$ & $192$ & $16$ & $40$ & $0$ & $0$ & $0$ & $2$ & $2$\\
$4\,.\,12\,.\,4$ & $192$ & $80$ & $80$ & $0$ & $0$ & $0$ & $4$ & $1$\\
$4\,.\,12\,.\,5$ & $276$ & $148$ & $24$ & $0$ & $1$ & $0$ & $6$ & $6$\\
$4\,.\,12\,.\,6$ & $-20$ & $60$ & $36$ & $0$ & $0$ & $0$ & $3$ & $1$\\
$4\,.\,12\,.\,7$ & $-40$ & $18$ & $21$ & $1$ & $0$ & $0$ & $3$ & $3$\\
$4\,.\,12\,.\,8$ & $768$ & $64$ & $16$ & $0$ & $0$ & $0$ & $4$ & $1$\\
$4\,.\,12\,.\,9$ & $768$ & $8$ & $8$ & $0$ & $0$ & $0$ & $2$ & $2$\\
$4\,.\,12\,.\,10$ & $768$ & $8$ & $8$ & $0$ & $0$ & $0$ & $2$ & $2$\\
$4\,.\,12\,.\,12$ & $152$ & $16$ & $16$ & $0$ & $0$ & $0$ & $1$ & $1$\\
$4\,.\,12\,.\,13$ & $416$ & $16$ & $40$ & $0$ & $0$ & $0$ & $2$ & $2$\\
$4\,.\,12\,.\,14$ & $90$ & $36$ & $36$ & $0$ & $1$ & $1$ & $3$ & $3$\\
$4\,.\,12\,.\,15$ & $-68$ & $4$ & $40$ & $0$ & $0$ & $0$ & $1$ & $1$\\
$4\,.\,12\,.\,16$ & $768$ & $16$ & $16$ & $0$ & $0$ & $0$ & $2$ & $1$\\
$4\,.\,12\,.\,17$ & $48$ & $16$ & $64$ & $0$ & $0$ & $0$ & $2$ & $1$\\
$4\,.\,12\,.\,18$ & $1056$ & $16$ & $16$ & $0$ & $0$ & $0$ & $4$ & $4$\\
$4\,.\,13\,.\,2$ & $768$ & $16$ & $16$ & $0$ & $0$ & $0$ & $2$ & $1$\\
$4\,.\,13\,.\,3$ & $48$ & $16$ & $64$ & $0$ & $0$ & $0$ & $2$ & $1$\\
$4\,.\,13\,.\,4$ & $1056$ & $16$ & $16$ & $0$ & $0$ & $0$ & $4$ & $4$\\
$4\,.\,13\,.\,5$ & $768$ & $8$ & $8$ & $0$ & $0$ & $0$ & $2$ & $2$\\
$4\,.\,13\,.\,6$ & $132$ & $0$ & $18$ & $1$ & $0$ & $0$ & $6$ & $6$\\
$4\,.\,13\,.\,7$ & $300$ & $0$ & $18$ & $1$ & $0$ & $0$ & $6$ & $6$\\
$4\,.\,13\,.\,8$ & $-6$ & $30$ & $18$ & $0$ & $1$ & $1$ & $3$ & $3$\\
$4\,.\,13\,.\,9$ & $96$ & $412$ & $24$ & $0$ & $1$ & $1$ & $3$ & $3$\\
$4\,.\,13\,.\,10$ & $54$ & $28$ & $24$ & $0$ & $1$ & $1$ & $3$ & $3$\\
$4\,.\,13\,.\,11$ & $-12$ & $30$ & $18$ & $0$ & $1$ & $1$ & $3$ & $3$\\
$4\,.\,13\,.\,12$ & $1088$ & $20$ & $12$ & $1$ & $0$ & $0$ & $12$ & $12$\\
$4\,.\,13\,.\,13$ & $-50$ & $8$ & $8$ & $0$ & $0$ & $0$ & $1$ & $1$\\
$4\,.\,13\,.\,14$ & $276$ & $100$ & $24$ & $0$ & $1$ & $1$ & $6$ & $6$\\
$4\,.\,13\,.\,15$ & $-14$ & $0$ & $48$ & $0$ & $0$ & $0$ & $3$ & $1$\\
$4\,.\,13\,.\,16$ & $48$ & $36$ & $36$ & $0$ & $0$ & $0$ & $6$ & $2$\\
$4\,.\,13\,.\,17$ & $-74$ & $2$ & $35$ & $1$ & $0$ & $0$ & $1$ & $1$\\
$4\,.\,14\,.\,1$ & $48$ & $16$ & $240$ & $0$ & $0$ & $0$ & $6$ & $6$\\
$4\,.\,14\,.\,2$ & $-68$ & $4$ & $40$ & $0$ & $0$ & $0$ & $1$ & $1$\\
$4\,.\,14\,.\,3$ & $-72$ & $20$ & $32$ & $0$ & $0$ & $0$ & $2$ & $1$\\
$4\,.\,14\,.\,4$ & $-74$ & $18$ & $27$ & $1$ & $0$ & $0$ & $1$ & $1$\\
$4\,.\,14\,.\,5$ & $-80$ & $2$ & $35$ & $1$ & $0$ & $0$ & $1$ & $1$\\
$4\,.\,14\,.\,6$ & $-82$ & $20$ & $26$ & $0$ & $0$ & $0$ & $1$ & $1$\\
$4\,.\,14\,.\,7$ & $152$ & $16$ & $16$ & $0$ & $0$ & $0$ & $1$ & $1$\\
$4\,.\,14\,.\,8$ & $416$ & $16$ & $40$ & $0$ & $0$ & $0$ & $2$ & $2$\\
$4\,.\,14\,.\,9$ & $640$ & $80$ & $48$ & $0$ & $0$ & $0$ & $12$ & $3$\\
$4\,.\,14\,.\,10$ & $992$ & $20$ & $32$ & $1$ & $0$ & $0$ & $32$ & $32$\\
$4\,.\,14\,.\,11$ & $-256$ & $80$ & $80$ & $0$ & $0$ & $0$ & $4$ & $1$\\
$4\,.\,15\,.\,1$ & $90$ & $36$ & $36$ & $0$ & $1$ & $1$ & $3$ & $3$\\
$4\,.\,15\,.\,2$ & $992$ & $20$ & $32$ & $1$ & $0$ & $0$ & $32$ & $32$\\
$4\,.\,15\,.\,3$ & $640$ & $80$ & $48$ & $0$ & $0$ & $0$ & $12$ & $3$\\
$4\,.\,15\,.\,4$ & $-256$ & $80$ & $80$ & $0$ & $0$ & $0$ & $4$ & $1$\\
$4\,.\,16\,.\,1$ & $2664$ & $12$ & $24$ & $0$ & $0$ & $0$ & $6$ & $6$\\
$4\,.\,16\,.\,2$ & $156$ & $162$ & $54$ & $0$ & $1$ & $1$ & $9$ & $3$\\
$4\,.\,16\,.\,3$ & $-88$ & $24$ & $12$ & $0$ & $0$ & $0$ & $2$ & $1$\\
$4\,.\,16\,.\,4$ & $-90$ & $12$ & $6$ & $0$ & $0$ & $1$ & $3$ & $3$\\
$4\,.\,16\,.\,5$ & $76$ & $6$ & $18$ & $0$ & $0$ & $0$ & $6$ & $6$\\
$4\,.\,16\,.\,6$ & $378$ & $12$ & $6$ & $0$ & $0$ & $1$ & $3$ & $3$\\
$4\,.\,16\,.\,7$ & $-14$ & $-402$ & $18$ & $0$ & $1$ & $1$ & $3$ & $3$\\
$4\,.\,16\,.\,8$ & $546$ & $0$ & $54$ & $0$ & $0$ & $1$ & $9$ & $3$\\
$4\,.\,16\,.\,9$ & $-70$ & $12$ & $6$ & $0$ & $0$ & $0$ & $1$ & $1$\\
$4\,.\,16\,.\,10$ & $-58$ & $12$ & $6$ & $0$ & $0$ & $0$ & $1$ & $1$\\
$4\,.\,16\,.\,11$ & $356$ & $0$ & $18$ & $1$ & $0$ & $0$ & $6$ & $6$\\
$4\,.\,16\,.\,12$ & $248$ & $24$ & $12$ & $0$ & $0$ & $0$ & $2$ & $1$\\
$4\,.\,16\,.\,13$ & $-4$ & $30$ & $18$ & $0$ & $1$ & $1$ & $3$ & $3$\\
$4\,.\,16\,.\,14$ & $96$ & $64$ & $16$ & $0$ & $1$ & $1$ & $2$ & $1$\\
$4\,.\,16\,.\,15$ & $40$ & $28$ & $24$ & $0$ & $1$ & $1$ & $3$ & $3$\\
$4\,.\,16\,.\,16$ & $34$ & $8$ & $8$ & $0$ & $0$ & $0$ & $1$ & $1$\\
$4\,.\,16\,.\,17$ & $110$ & $28$ & $24$ & $0$ & $1$ & $1$ & $3$ & $3$\\
$4\,.\,18\,.\,1$ & $-50$ & $8$ & $8$ & $0$ & $0$ & $0$ & $1$ & $1$\\
$4\,.\,18\,.\,2$ & $-14$ & $0$ & $48$ & $0$ & $0$ & $0$ & $3$ & $1$\\
$4\,.\,18\,.\,3$ & $48$ & $36$ & $36$ & $0$ & $0$ & $0$ & $6$ & $2$\\
$4\,.\,18\,.\,4$ & $-62$ & $0$ & $36$ & $0$ & $0$ & $0$ & $1$ & $1$\\
$4\,.\,24\,.\,3$ & $-82$ & $0$ & $30$ & $0$ & $0$ & $0$ & $1$ & $1$\\
\end{supertabular}
\endgroup
\end{adjustwidth}
\clearpage
\onecolumn


%
%
  
  \printbibliography

\end{document}